\newcommand{\be}{\begin{equation}}
\newcommand{\ee}{\end{equation}}
\newcommand{\bea}{\begin{eqnarray}}
\newcommand{\eea}{\end{eqnarray}}
\newcommand{\bee}{\begin{eqnarray*}}
\newcommand{\eee}{\end{eqnarray*}}
\numberwithin{equation}{section}
\theoremstyle{plain}
\newtheorem*{theorem*}{Theorem}
\newtheorem*{proposition*}{Proposition}
\theoremstyle{definition}
\newtheorem*{example*}{Example}
\newtheorem{fact}{Fact}
 \newtheorem{thm}{Theorem}[section]
 \newtheorem{cor}[thm]{Corollary}
 \newtheorem{lem}[thm]{Lemma}
 \newtheorem{prop}[thm]{Proposition}
 \newtheorem{question}[thm]{Question}
 \theoremstyle{definition}
 \newtheorem{case}{\bf Case}
\newtheorem{case-x}{\bf Case}
\newtheorem{subcase-x1}{\bf Subcase}
\newtheorem{subcase-x2}{\bf Subcase}
\newtheorem{subcase-x3}{\bf Subcase}
\newtheorem{subcase-x4}{\bf Subcase}
\newtheorem{case-y}{\bf Case}
\newtheorem{subcase-y1}{\bf Subcase}
\newtheorem{subcase-y2}{\bf Subcase}
\newtheorem{subcase-y3}{\bf Subcase}
\newtheorem{subcase-y4}{\bf Subcase}
\newtheorem{subcase-y5}{\bf Subcase}
\newtheorem{subcase-y6}{\bf Subcase}
\newtheorem{subcase-y7}{\bf Subcase}
\newtheorem{subcase-y8}{\bf Subcase}
 \newtheorem{deff}[thm]{Definition}
 \theoremstyle{remark}
 \newtheorem{rem}[thm]{Remark}
 \newtheorem{example}[thm]{Example} 
\newtheorem*{theproof-2}{\bf Proof of Proposition \ref{propos}}
\newtheorem*{theproof-1}{\bf Proof of Proposition \ref{propM}}
\begin{document}

\title[Identities involving additive maps on division rings]
{Identities involving additive maps on division rings}
\author{Lovepreet Singh}
\address{Lovepreet Singh,
	Department of Mathematics, Indian Institute of Technology Patna,
	Bihar, 801106, India}
\email{lovepreet.ls0029@gmail.com}

\author{S. K. Tiwari}
\address{S. K. Tiwari,
	Department of Mathematics, Indian Institute of Technology Patna,
	Bihar, 801106, India}
\email{shaileshiitd84@gmail.com \& sktiwari@iitp.ac.in}

\thanks{{\it Mathematics Subject Classification:} 16K40, 16R50, 16R60 }
\thanks{{\it Key Words and Phrases.}  Division ring, Functional identity, Generalized polynomial identity,  GPI-algebra, PI-ring. }
 \thanks{ }


\dedicatory{}

\commby{}
\begin{abstract}
Let $g$ be an additive map on a division ring $D$. In this paper, we study the functional identity $G_{1}(y)g(y)G_{2}(y) = H(y)$, where $G_{1}(Y), G_{2}(Y)$, $H(Y)$ are generalized polynomials in $D_{G}[Y]$ such that both $G_{1}(Y)$ and $G_{2}(Y)$ are non-zero. By application of this result and its implications, we prove that if $D$ is a non-commutative division ring with $\operatorname{char}(D) \neq 2$, then the only possible solution of additive maps $g_{1},g_{2}: D \rightarrow D$ satisfying the identity $g_{1}(y)y^{-m} + y^{n}g_{2}(y^{-1})= 0$ is $ g_{1} = g_{2} = 0$, where $m$ and $n$ are positive integers with $(m,n) \neq (1,1)$. 
\end{abstract}

\maketitle

\section{Introduction}
Throughout, rings or algebras are associative with unity, and $R^{*}$ denotes the set of all units in a ring $R$. In $1821$, Cauchy's functional equation $g (y_{1} + y_{2}) = g (y_{1})+ g(y_{2})$
initiated the study of inverse identities in division rings. Let $R$ be a ring. A map $g: R \rightarrow R$ is called an additive map if $g(x+y)=g(x)+g(y)$ for all $x, y \in R$. In $1963$,  Halperin \cite{h1} asked a question of whether an additive map $g$ on $\mathbb{R}$ satisfying the identity $g(y) = y^{2}g(y^{-1})$ is continuous or not. In $1964$, Kurepa \cite{k1} proved that, if  $g_{1}, g_{2}$ are two non-zero additive maps and $P$ is a continuous function on $\mathbb{R}$ with $P(1)=1$ such that $g_{1}(y) = P(y)g_{2}(y^{-1})$ for all $y \in \mathbb{R}^{\ast}$, then $P(y) = y^{2}$, $g_{1}(y) + g_{2}(y) = 2yg_{2}(1)$, and the map $y \mapsto g_{1}(y) - y g_{1}(1)$ is a derivation.  Further in $1968$, Nishiyama and Horinouchi \cite{n1} characterized the maps satisfying $g(y^{m_{1}}) = ay^{m_{2}}g(y^{m_{3}})$ for all $y \in \mathbb{R}$, where $m_{1},m_{2}, \ \text{and} \ m_{3}$ are the integers. In $1970-1971$, Kannappan and Kurepa  \cite{pl1, pl2} studied some functional identities with inverses on $\mathbb{R}$.

Our present study is also related to derivations satisfying functional identities. By a derivation $\delta$ on a ring $R$, we mean an additive map $\delta$ satisfying the identity $\delta(y_{1}y_{2}) = \delta(y_{1})y_{2} + y_{1} \delta (y_{2})$ for all $y_1, y_2 \in R$. Every derivation $\delta : R \rightarrow R$ satisfies $\delta (y) =-y \delta (y^{-1})y$ for all $y \in R^{*}$. In $1995$, Bre\v{s}ar \cite{bres1} proved that, if $g_{1},g_{2}$ are two additive maps on $D$ such that $g_{1}(y)y = yg_{2}(y)$ for all $y \in D$, then there exists $a \in D$ such that $g_{1}(y) = ya + \eta (y) $ and $g_{2}(y) = ay + \eta (y)$ for all $y \in D$, where $\eta$ is an additive central map on $D$. In $2018$, Catalano \cite{lc1} set a way for extending rational identities to functional identities and proved the following result.

\begin{thm}(\cite[Theorem $1$]{lc1}) \label{catmain}
	Suppose $g_{1}$ and $g_{2}$ are additive maps on a division ring $D$ with $\operatorname{char}(D) \neq 2$, such that
	\begin{equation} \label{ceq1}
		g_{1}(y)y^{-1} + yg_{2}(y^{-1})=0
	\end{equation}
	for all $y \in D^{*}$. Then there exists $a \in D$ such that $g_{1}(y) = ya + \delta (y)$ and $g_{2}(y) = -ay + \delta (y)$ for all $y \in D$, where $\delta$ is a derivation on $D$.
\end{thm}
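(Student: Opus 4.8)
The plan is to rewrite the hypothesis in multiplicative form, normalize, and reduce the problem to showing that two auxiliary additive maps are derivations. Multiplying \eqref{ceq1} on the right by $y$ gives $g_1(y) = -y\,g_2(y^{-1})\,y$ for all $y \in D^*$, and the symmetric rearrangement yields $g_2(y) = -y\,g_1(y^{-1})\,y$. Setting $y = 1$ forces $g_2(1) = -g_1(1)$; I write $a = g_1(1)$ and introduce the additive maps $\delta(y) = g_1(y) - ya$ and $\varepsilon(y) = g_2(y) + ay$, both of which kill $1$, i.e. $\delta(1) = \varepsilon(1) = 0$. A direct substitution then collapses the hypothesis to the single identity $\delta(y) = -y\,\varepsilon(y^{-1})\,y$ for all $y \in D^*$, together with its companion $\varepsilon(y) = -y\,\delta(y^{-1})\,y$. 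Replacing $y$ by $w^{-1}$ records these as the mixed inverse relations $\delta(w^{-1}) = -w^{-1}\varepsilon(w)w^{-1}$ and $\varepsilon(w^{-1}) = -w^{-1}\delta(w)w^{-1}$ for all $w \in D^*$.

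The heart of the argument is to turn these inverse relations into Jordan-derivation identities. The device is the elementary scalar identity $(y^2 - y)^{-1} = (y-1)^{-1} - y^{-1}$, valid whenever $y \neq 0,1$. Applying $\delta$ to both sides, expanding each term through the mixed relation $\delta(w^{-1}) = -w^{-1}\varepsilon(w)w^{-1}$, and then clearing the sandwiches by multiplying on the left and right by $y^2 - y = y(y-1) = (y-1)y$, all inverse factors telescope, since $(y^2-y)(y-1)^{-1} = y$ and $(y^2-y)y^{-1} = y-1$. After expanding $(y-1)\varepsilon(y)(y-1)$ one is left with $\varepsilon(y^2) = y\,\varepsilon(y) + \varepsilon(y)\,y$ for all $y \in D$; the same computation with the roles of $\delta$ and $\varepsilon$ interchanged (now using $\varepsilon(w^{-1}) = -w^{-1}\delta(w)w^{-1}$) gives $\delta(y^2) = y\,\delta(y) + \delta(y)\,y$. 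Thus both $\delta$ and $\varepsilon$ are Jordan derivations of $D$. Since $\operatorname{char} D \neq 2$ and a division ring is prime, Herstein's theorem lets me upgrade each of them to an ordinary derivation.

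Once $\varepsilon$ is known to be a derivation it automatically satisfies $\varepsilon(y^{-1}) = -y^{-1}\varepsilon(y)y^{-1}$, and substituting this into $\delta(y) = -y\,\varepsilon(y^{-1})\,y$ collapses the sandwich to $\delta = \varepsilon$. Renaming this common derivation $\delta$, I recover $g_1(y) = ya + \delta(y)$ and $g_2(y) = -ay + \delta(y)$, and a final direct check confirms that these satisfy \eqref{ceq1}. I expect the only delicate point to be the bookkeeping in the middle step: one must track the non-commuting sandwich factors and verify that the telescoping is exact, and the characteristic hypothesis is used genuinely when invoking Herstein's theorem to pass from Jordan derivations to derivations. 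Choosing the precise identity $(y^2-y)^{-1} = (y-1)^{-1} - y^{-1}$ is what makes the inverse-free linearization work; a naive $y \mapsto y+1$ substitution alone turns out to be tautological and leaves residual $(y+z)^{-1}$-type terms that do not linearize.
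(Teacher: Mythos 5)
Your argument is correct. Note that the paper does not prove Theorem \ref{catmain} at all --- it is quoted from Catalano \cite{lc1} as background --- so there is no in-paper proof to compare against; your route (normalize by $a=g_1(1)$, use the resolvent identity $(y^2-y)^{-1}=(y-1)^{-1}-y^{-1}$ to turn the inverse relations into the Jordan-derivation identity, then invoke Herstein's theorem on prime rings of characteristic $\neq 2$ and collapse $\delta=\varepsilon$) is essentially the standard proof from Catalano's original paper, and all the sandwich-telescoping steps check out.
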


If we take $\operatorname{char}(D) =2$, then the maps $g_{1}(y)=y$ and $g_{2}(y) = y $ satisfy the equation \eqref{ceq1}, but not of the form as mentioned in Theorem \ref{catmain}. So the condition $\operatorname{char}(D) \neq 2$ cannot be removed. In \cite{lc1}, Catalano proved Theorem \ref{catmain} for the matrix case and raised the question of whether the condition $\operatorname{char} D \neq 3$ in \cite[Theorem $4$]{lc1} can be removed. Latter, Arga\c c et. al. \cite{narg1} extended Catalano's results in $2020$ and gave an affirmative answer by proving the following theorem. 

\begin{thm}  (\cite[Theorem $1.1$]{narg1}) \label{catmain2}
	Let $D$ be a division ring which is either  non-commutative or $\operatorname{char}(D) \neq 2$. Suppose  $g_{1}$ and $g_{2}$ are additive maps on $D$ satisfying $	g_{1}(y)y^{-1}  + yg_{2}(y^{-1})=0$
	for all $y \in D^{*}$. Then  there exists $a \in D$ such that $g_{1}(y) =ay + \delta (y) $ and $g_{2}(y) = - ya +\delta (y) $ for all $y \in D$, where $\delta$ is a derivation.
\end{thm}
The assumption either $D$ is non-commutative or $\operatorname{char} D \neq 2$ can not be removed. For this, we refer the reader to \cite[Example]{narg1}.

In $2024$, Catalano and Merch\'an \cite{lc2} studied the identity $g_{1}(y) = -y^{n}g_{2}(y^{-1})$ on a division ring $D$, where $g_{1},g_{2}$ are additive maps on $D$ and $n$ is a positive integer. The above identity is completely solved by Ng \cite{ng1} on fields. Recently, Lee and Lin  \cite{tk1} proved the following theorem.

\begin{thm} (\cite[Theorem $5.1$]{tk1}) \label{leemain}
 Let $D$ be a non-commutative division ring with $\operatorname{char}(D) \neq 2$ and $n \neq 2$ is a positive integer. If $g_{1}$ and $g_{2}$ are additive maps on $D$ such that
	\begin{equation} \label{leeeq}
		g_{1}(y) = y^{n}g_{2}(y^{-1})
	\end{equation}
	for all $y \in D^{*}$, then $g_{1}=g_{2}=0$.
\end{thm}
For $n=2$, the equation \eqref{leeeq} is solved by Dar and Jing \cite{na1}. They obtained $g_{1}(y) = ya $ and $g_{2}(y) = ya$ for all $y \in D$, where $a \in D$. The assumption non-commutative or $\operatorname{char}(D) \neq 2$ can not be removed in Theorem \ref{leemain}. For example, if we take $D= \mathbb{Z}_{2}$ and $g_{1}(y) = g_{2}(y) = y$, then $g_{1}$ and $g_{2}$ satisfy the equation \eqref{leeeq}, but both $g_1$ and $g_2$ are non-zero.

Motivated by the above results, it is natural to ask the characterization of additive maps $g_{1},g_{2}: D \rightarrow D$ satisfying the identity	$g_{1}(y)y^{-m} + y^{n}g_{2}(y^{-1}) =0,$ where $n, m$ are positive integers.
To answer the above question, we study functional identities involving maps defined by generalized polynomials in the variable $Y$ with coefficients in a division ring $D$. Let $D_{G}[Y]$ denote the generalized free algebra over $Z(D)$, in the variable $Y$. More precisely, elements in $D_{G}[Y]$ are finite sum of monomials. A monomial in $D_{G}[Y]$ of degree $s$, is of the form
\begin{equation*}
	q_{1}Yq_{2}Yq_{3} \cdots q_{s}Yq_{s+1},
\end{equation*}
where $q_{i} \in D$. For variables  $Y_1, \cdots, Y_m$, the generalized free algebra over $Z(D)$  is represented by $D_{G}[Y_1, \cdots, Y_m ]$. For more details, we refer the reader to \cite{GoMo}.

\begin{deff}
	A map $g: D\rightarrow D$ is called an elementary operator if there exist finitely many $p_{i},q_{i} \in D$ such that $g(y) = \sum_{i} p_{i}yq_{i}$ for all $y \in D$.
\end{deff}

\begin{thm} \label{mainB}
	Suppose $g$ is an additive map on a division ring $D$ and  $G_1(Y), G_2(Y),\\ H(Y) \in D_{G}[Y]$ such that $G_1(y)g(y)G_2(y) = H(y)$ for all $y \in D$. If $G_1(Y) $ and $G_{2}(Y)$ are non-zero, then either $g$ is an elementary operator or $D$ is finite-dimensional over $Z(D)$.
\end{thm}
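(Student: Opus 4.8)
The plan is to establish the dichotomy by assuming that $D$ is \emph{not} finite-dimensional over its center $Z(D)$ and proving, under this hypothesis, that $g$ must be an elementary operator. The purpose of the hypothesis is a classical theorem of Amitsur: a division ring infinite-dimensional over its center satisfies no nontrivial generalized polynomial identity. Concretely, this means that the evaluation map $D\{Y\} \to \mathrm{Map}(D,D)$ is injective, so that once we produce a generalized polynomial vanishing identically on $D$ we may conclude that it is the zero element of $D\{Y\}$ and compare coefficients against a fixed $Z(D)$-basis of $D$. This is exactly the regime in which the theory of functional identities is well behaved; the finite-dimensional case is the genuine exception recorded in the second alternative of the statement.

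The first substantive step is to linearize the identity $G_1(y)g(y)G_2(y) = H(y)$ using only the additivity of $g$. Writing $d_1, d_2$ and $e$ for the $Y$-degrees of $G_1, G_2$ and $H$, I would substitute $y \mapsto y_1 + \cdots + y_k$ with $k$ exceeding the total degree and expand, using $g(y_1 + \cdots + y_k) = \sum_j g(y_j)$. Collecting the component that is multilinear of degree one in each $y_i$ eliminates all the genuinely nonlinear behavior of $g$ and produces, after specializing $y_2, \ldots, y_k$ to suitably generic elements, an identity of the shape
\begin{equation*}
	\sum_{\nu} a_{\nu}\, g(x)\, b_{\nu} = \sum_{i} c_i\, x\, d_i,
\end{equation*}
with fixed $a_\nu, b_\nu, c_i, d_i \in D$, valid for all $x \in D$; here the right-hand side is forced to be of degree one in $x$ precisely because we extracted the multilinear part. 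The division ring structure is what makes the single-factor form $G_1(y)g(y)G_2(y)$ so favorable: on the (nonempty, by Amitsur) set where $G_1(y)$ and $G_2(y)$ are both nonzero we even have $g(y) = G_1(y)^{-1}H(y)G_2(y)^{-1}$ outright, which serves as a useful guide to the expected coefficients $p_i, q_i$.

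It then remains to pass from the ``sandwiched'' relation $\sum_\nu a_\nu g(x) b_\nu = \sum_i c_i x d_i$ to the desired conclusion $g(x) = \sum_i p_i x q_i$. I would argue that the $Z(D)$-linear operator $w \mapsto \sum_\nu a_\nu w b_\nu$ on $D$ is, under the hypothesis of no nontrivial GPI, injective enough to be inverted on the relevant range, which lets one solve for $g(x)$ and read off an elementary expression; the coefficients $a_\nu, b_\nu$ are themselves read from the coefficients of $G_1, G_2$, so they are not arbitrary. Throughout, the independence of distinct monomials in $D\{Y\}$ guaranteed by Amitsur's theorem is the engine that permits matching coefficients and ruling out hidden cancellations.

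I expect the main obstacle to be precisely this last passage: controlling the a priori wild additive map $g$. The linearization tames $g$ only after the multilinear extraction, and the step from ``$g$ appears sandwiched and equals something elementary'' to ``$g$ is itself elementary'' is where the infinite-dimensionality of $D$ over $Z(D)$ is indispensable, since in the finite-dimensional case the sandwich operator can degenerate and admit non-elementary additive solutions. A secondary technical difficulty is the bookkeeping when $G_1, G_2, H$ are inhomogeneous in $Y$: one must track the several degree components simultaneously and ensure that the degree count $e = d_1 + d_2 + 1$ emerges consistently rather than being assumed, while also handling the harmless subtlety that central scalars commute with $Y$ in $D\{Y\}$ whereas general elements of $D$ do not.
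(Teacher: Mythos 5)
Your overall strategy coincides with the paper's: assume $[D:Z(D)]=\infty$, invoke the GPI theorem for division rings (the paper's Theorem \ref{m1}) to make evaluation of $D\{Y\}$ on $D$ faithful, linearize $G_1(y)g(y)G_2(y)=H(y)$ in $s_1+s_2+1$ variables, and specialize all but one of them. (A small slip: you must take the number of variables \emph{equal} to the total degree $s_1+s_2+1$, not exceeding it, or the multilinear component you want to extract is identically zero.) Up to that point you and the paper agree.

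The genuine gap is the final step. After specialization, the terms in which $g$ acts on the surviving variable $x$ form a sum $\sum_\nu a_\nu g(x) b_\nu$ over the partitions of the specialized variables between the two factors, and you propose to recover $g(x)$ by inverting the elementary operator $T(w)=\sum_\nu a_\nu w b_\nu$. Such an operator is in general not injective even when $D$ is infinite-dimensional over $Z(D)$: for instance $w\mapsto aw-wa$ annihilates the centralizer $C_D(a)$, which can itself be infinite-dimensional over $Z(D)$ and therefore supports non-elementary additive maps. Hence from $\sum_\nu a_\nu g(x)b_\nu = \sum_i c_i x d_i$ one cannot conclude that $g$ is elementary, and the phrase ``injective enough to be inverted on the relevant range'' is precisely the unproved content of the theorem. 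The paper's proof is engineered to avoid this: writing $G_i=G_{i0}+G_{i1}$ with $G_{i1}$ the top homogeneous part, it arranges the $(s_1+s_2+1)$-fold linearization so that $g$ of the distinguished variable occurs in a \emph{single} sandwich $t\,g(x)\,t'$ with $t=G_{11}^{(s_1)}(z_{s_2+1},\dots,z_{s_1+s_2})$ and $t'=G_{21}^{(s_2)}(z_1,\dots,z_{s_2})$; the points $z_i$ can be chosen with $t,t'\neq 0$ unless one of these multilinear parts is a nontrivial GPI, in which case $D$ is finite-dimensional and we land in the other alternative. Then $g(x)=t^{-1}(\cdots)t'^{-1}$ is solved outright. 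A secondary gap: your claim that the right-hand side is automatically of degree one in $x$ is not free; the paper must separately dispose of the case $\deg E>1$ for the residual term $E(Y)=t^{-1}H^{(s_1+s_2+1)}(z_1,\dots,z_{s_1+s_2},Y)t'^{-1}$, again via additivity of $g$ and the GPI theorem. To repair your argument you would need either to reproduce the paper's single-sandwich normalization or to give an independent proof that the particular operator $T$ arising here is invertible on the relevant subspace; as written, that step fails.
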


\begin{thm}\label{main2}
	Let $D$ be a non-commutative division ring with $\operatorname{char}(D) \neq 2$. If $g_{1}$ and $g_{2}$ are additive maps on $D$ such that
	\begin{equation}  \label{maineq1}
		g_{1}(y)y^{-m} + y^ng_{2}(y^{-1})=0
	\end{equation}
	for all $y \in D^ *$, where $m$ and $n$ are positive integers with $(m,n) \not = (1,1)$, then $g_{1} = g_{2} = 0$.
\end{thm}

We observe that for  $n = 1$ and $m \geq2$, if additive maps $g_{1},g_{2}$ on $D$ satisfy the identity $g_{1}(y)y^{-m} + yg_{2}(y) = 0$ for all $y \in D^{*}$, then by Theorem \ref{main2} we get $g_{1} = g_{2}= 0$. As an application of this result, we have the following corollary.

\begin{cor} \label{dcor}
	Let $D$ be a non-commutative division ring with $\operatorname{char}(D) \neq 2$ and $ a \in D^{*}, \ell \in D$. If $g_{1}$ and $g_{2}$ are additive maps on $D$ such that $g_{1}(y_{1}) y_{2}^{-m} + y_{1} g_{2}(y_{2}) = \ell$
	for all $y_{1}, y_{2} \in D$ with $y_{1}y_{2}=a=y_{2}y_{1}$, where  $m\geq 2$ is a positive integer, then $g_{1}(y) = 0$ and  $g_{2}(y)= ya^{-1}\ell$ for all $y \in D$.
\end{cor}

\begin{proof}
	Let $y_{1} \in D^{*}$ and $y_{2} = y_{1}^{-1}a$. It follows that $g_{1}(y_{1})y_{1}^{-m}a^{m} + y_{1}g_{2}(y_{1}^{-1}a) = \ell$. This implies
	\begin{equation} \label{eqc2}
		g_{1}(y_{1})y_{1}^{-m}a^{m} + y_{1}(g_{2}(y_{1}^{-1}a)-y_{1}^{-1}\ell) = 0.
	\end{equation}
	Multiplying equation \eqref{eqc2} by $a^{-m}$ from the right hand side, we get
	\begin{equation}
		g_{1}(y_{1})y_{1}^{-m} + y_{1}(g_{2}(y_{1}^{-1}a)-y_{1}^{-1}\ell)a^{-m} = 0.
	\end{equation}
It follows from Theorem \ref{main2} that $g_{1}(y_{1})= 0$ and $g_{2}(y_{1}) = y_{1}a^{-1}\ell$ for all $y_{1} \in D$.
\end{proof}

The assumption non-commutative with $\operatorname{char}(D) \neq 2$ is essential in Corollary \ref{dcor}. Following examples validate our fact. 
 
\begin{example}
		 Let $R = \mathbb{Z}_{2}$ be a ring and define maps $g_{1}, g_{2}$ on $R$ such that $g_{1}(y) = -y^{2}$ and $g_{2}(y) = y + y^{2}$ for all $y \in R$. It is clear that both $g_1$ and $g_2$ are additive maps on $R$. For $m=3$, $a = 1 = \ell$, clearly $g_1$ and $g_2$ are satisfying the identity $g_{1}(y)y^{-3} + yg_{2}(y^{-1}) = 1$ for all $y \in R^{*}$, but the forms of $g_1$ and $g_2$ are not same as mentioned in Corollary \ref{dcor}.
\end{example}

\begin{example}
	 Let $R = M_{t}(\mathbb{Z}_{4})$  be a non-commutative ring, where $t \geq 2$. Define $g_{1},g_{2}: R \rightarrow R$ as $g_{1}(y) = g_{2}(y) = y$ for all $y \in R$. It is easy to verify that $g_1$ and $g_2$ are additive maps. For $m=1$, $a = 1$ and $ \ell = 2$, it is clear that $g_{1}(y_{1})y_{2} + y_{1}g(y_{2}) = 2$ for all $y_{1},y_{2} \in R^{*}$ with $y_{1}y_{2}=1=y_{2}y_{1}$. However, $g_{1}$ and $g_{2}$ are not in the  same form as mentioned in Corollary \ref{dcor}.
\end{example}
Here is an outline of the paper.
In Section $2$, we provide a proof of Theorem \ref{mainB} and explore its implications and possible generalizations. In Section $3$, we investigate the identity $g(y^{2}) = w_{1}(y)g(y) + g(y)w_{2}(y) + w_{3}(y)g(y)w_{4}(y)$, where $w_{1},w_{2},w_{3},w_{4}$ are arbitrary maps on $D$ and $g$ is an additive map on $D$. In Section $4$, we apply these characterizations and their implications to solve the equation \eqref{maineq1}. In particular, we provide a proof of Theorem \ref{main2}.

\section{The identity $G_1(y)g(y)G_2(y) = H(y)$}
Throughout, let $D$ be a division ring with center $Z(D)$.  Then $D$ is said to be a GPI-algebra if there exists a non-zero $g(Y_1, \cdots, Y_m) \in D_{G}[Y_1, \cdots, Y_m]$ such that $g(y_1, \cdots, y_m) = 0$ for all $y_i \in D$. In this case, $g(Y_1, \cdots, Y_m)$ is a non-trivial GPI for $D$. The following is a special case of Martindale's Theorem \cite{M1}.

\begin{thm} (\cite[Theorem $3$]{M1})\label{m1}
	If $D$ is a division GPI-algebra, then $D$ is finite-dimensional over $Z(D)$.
\end{thm}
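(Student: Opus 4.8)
The plan is to prove the contrapositive: if $D$ is infinite-dimensional over $Z=Z(D)$, then $D$ admits no non-trivial generalized polynomial identity. Two preliminary normalizations come first. By the usual Vandermonde/linearization process one may assume the GPI is \emph{multilinear} of some degree $d$, say $f(Y_1,\dots,Y_d)=0$; the only caveat is that linearization needs $Z$ to have more than $d$ elements, so I would dispose of the finite-center case separately (a division ring algebraic over a finite center is commutative by Jacobson's theorem, and the non-algebraic case is reduced by a central scalar extension), leaving the case $Z$ infinite where multilinearization is unobstructed. Throughout I record a degree-one generalized polynomial $\sum_i a_iYb_i$ by the tensor $\sum_i a_i\otimes b_i\in D\otimes_Z D^{\mathrm{op}}$ acting on $D$ via $x\mapsto\sum_i a_ixb_i$.

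The base case is that $D$ satisfies no non-trivial GPI of degree one, equivalently that the natural map $D\otimes_Z D^{\mathrm{op}}\to\operatorname{End}_Z(D)$ is injective; crucially this holds with no dimension hypothesis. I would prove it by a minimal-length argument. Suppose $\sum_{i=1}^n a_ixb_i=0$ for all $x$ with $n$ minimal; then the $a_i$ are $Z$-independent, the $b_i$ are $Z$-independent, and all are nonzero. Normalizing $b_n=1$ (right-multiply by $b_n^{-1}$) and comparing the substitution $x\mapsto xy$ with the right-multiple by $y$ of the original relation gives $\sum_{i<n} a_ix\,(yb_i-b_iy)=0$ for all $x,y$. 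This is a relation with fewer than $n$ terms, so by minimality its tensor vanishes; since the $a_i$ are $Z$-independent this forces $b_i\in Z$ for $i<n$, whence $a_n=-\sum_{i<n} b_ia_i$ lies in the $Z$-span of $a_1,\dots,a_{n-1}$, contradicting independence.

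For degree $d\ge 2$ the mechanism is to reduce to the degree-one situation by specialization. Writing $f=\sum_j p_j\,Y_1\,q_j$ with $p_j,q_j$ generalized monomials in $Y_2,\dots,Y_d$, the base case applied in the variable $Y_1$ shows that for \emph{every} specialization $\bar c=(c_2,\dots,c_d)\in D^{d-1}$ the tensor $\sum_j p_j(\bar c)\otimes q_j(\bar c)$ vanishes in $D\otimes_Z D^{\mathrm{op}}$. Thus a non-trivial degree-$d$ GPI produces a whole family of tensor relations, one per specialization. The goal is to show such a family cannot be non-trivial once $\dim_Z D=\infty$: in infinite dimension one can choose the specializations $\bar c$ so as to \emph{separate} the distinct generalized monomials occurring in $f$, which is exactly the freedom that disappears in finite dimension. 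I would then complete the argument along Martindale's line: from a GPI of minimal degree the separation yields a nonzero element of the central closure acting as a rank-one $Z$-transformation, i.e.\ a nonzero socle. Since the central closure of the simple ring $D$ is $D$ itself (its extended centroid being $Z$), and the only left ideals of a division ring are $0$ and $D$, a nonzero socle must equal $D$; hence $D$ is simple Artinian with minimal idempotent $1$ and $1\cdot D\cdot 1=D$ finite-dimensional over $Z$ — the desired conclusion.

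The main obstacle is precisely this separation/socle step. The tempting shortcut of inducting on $d$ by treating $\sum_j p_j\otimes q_j$ as a lower-degree GPI \emph{over} the ring $A=D\otimes_Z D^{\mathrm{op}}$ fails, because the degree-one injectivity lemma is special to division rings and does not transfer to the prime ring $A$ (prime rings genuinely do satisfy non-trivial GPIs). So the infinite-dimensionality of $D$ must be used head-on, to convert the per-specialization tensor relations of a minimal-degree identity into an honest finite-rank operator; packaging this as a linear-independence statement for generalized monomials as $Z$-multilinear maps $D^{d}\to D$ (equivalently, the injectivity of the evaluation of coefficient tensors) is the technical heart, and it is here that the hypothesis $\dim_Z D=\infty$ is indispensable and where the bulk of Martindale's and Amitsur's work resides.
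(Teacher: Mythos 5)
Your proposal should first be set against the fact that the paper contains no proof of Theorem \ref{m1} at all: it is quoted verbatim from Martindale \cite{M1}, so the benchmark is Martindale's published argument. Measured against that, the parts you actually prove are correct and standard: your minimal-counterexample proof of the degree-one case (injectivity of $D\otimes_{Z}D^{\mathrm{op}}\to\operatorname{End}_{Z}(D)$) is the classical Artin--Whaples/Amitsur independence argument, and the reduction of a multilinear GPI $f=\sum_j p_j\,Y_1\,q_j$ to the family of tensor relations $\sum_j p_j(\bar c)\otimes q_j(\bar c)=0$ for every specialization $\bar c$ is sound. (Incidentally, your multilinearization caveat about the size of $Z$ is unnecessary: $G^{(s)}$ is obtained from $G$ by substituting sums of variables, hence is an identity whenever $G$ is, and its formal nontriviality needs no hypothesis on $Z$ --- this is exactly Remark \ref{r2}$(iv)$ of the paper. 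Your finite-center detour via ``central scalar extension'' is both moot and shaky, since $D\otimes_{Z}Z(t)$ need not be a division ring when $\dim_{Z}D=\infty$, and your degree-one lemma uses inverses.)

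The genuine gap is that the step you explicitly defer \emph{is} Martindale's theorem. Minimality of the degree does not yield the separation you want: from $\sum_j p_j(\bar c)\otimes q_j(\bar c)=0$ one concludes $p_j(\bar c)=0$ only at specializations $\bar c$ where the $q_j(\bar c)$ are $Z$-independent, but the $q_j(\bar c)$ can be $Z$-dependent at \emph{every} $\bar c$ with dependence coefficients varying with $\bar c$, while no fixed central combination of the $q_j$ is an identity; bridging this requires the real machinery (Amitsur's passage to a dense ring of linear transformations via $D\otimes_{Z}F$ for a maximal subfield $F$, or Martindale's construction of a finite-rank element), none of which your sketch supplies. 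Worse, your closing inference is circular for a division ring: $\operatorname{Soc}(D)=D$ always, since $D$ is itself a minimal left ideal, so ``nonzero socle'' carries no information, and the finite-dimensionality of $eDe=D$ over $C=Z$ is precisely the conclusion Martindale extracts from the GPI, not a consequence of having a socle. The correct endgame would be to produce from the identity a \emph{nonzero finite-rank} operator of the form $x\mapsto axb$ (then bijectivity of $x\mapsto axb$ forces $\dim_{Z}D<\infty$ at once); producing that operator is the unproven crux, so the proposal, while a faithful road map of Martindale's strategy, is not a proof.
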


Given additive maps $g_{i1}, \cdots, g_{is}: D \rightarrow D$ and $p_{ij} \in D$, assume
\begin{center}
	$G(y) = \sum_{i} p_{i1}g_{i1}(y)p_{i2} \cdots p_{is}g_{is}(y)p_{is+1}$
\end{center}
for all $ y \in D$. Applying the standard linearizing argument to $G(y)$, we define
\begin{center}
	$G^{(1)}(y_1) = G(y_1)$, \hspace{0.5cm} $G^{(2)}(y_1,y_2) = G^{(1)}(y_1 + y_2) - G^{(1)}(y_1) - G^{(1)}(y_2)$.
\end{center}

In general, we assume
\begin{multline*}
	G^{(k+1)}(y_1, \cdots, y_k, y_{k+1}) = \\
	G^{(k)}(y_1, \cdots, y_{k-1},y_k+ y_{k+1}) -  G^{(k)}(y_1, \cdots, y_{k-1}, y_{k}) -  G^{(k)}(y_1, \cdots, y_{k-1}, y_{k+1})
\end{multline*}
for $k \geq 1$.  It is well-known that 
\begin{equation*}
	G^{(s)}(y_1, \cdots, y_s) = \sum_{i} \sum_{\sigma  \in Sym(s)} p_{i1}g_{i1}(y_{\sigma(1)})p_{i2} \cdots p_{is}g_{is}(y_{\sigma(s)})p_{is+1}
\end{equation*}
for all $y_i \in D$, where $Sym(s)$ is a symmetric group of $\{  1,2, \cdots, s \}$. If $k > s$, then 
\begin{equation*}
	G^{(k)}(y_1, \cdots, y_k) = 0
\end{equation*}
for all $y_i \in D$.
Throughout the paper, we define
\begin{equation*}
		G^{(k)}(y_1, \cdots, \hat{y_{j}}, \cdots, y_k)=G^{(k)}(y_1, \cdots, y_{j-1},y_{j+1}, \cdots, y_k)	
\end{equation*}
for $y_{1}, \cdots , y_{k} \in D$ and $k \geq 1$ is an integer.

\begin{rem} (\cite[Remark $2.2$]{tk1}) \label{r2}
	Let $D$ be a division ring, and $G(Y) \neq 0 \in D_{G}[Y]$.
	\begin{itemize}
		\item[(i)] If $\operatorname{deg} G(Y) >1$, then $G(Y_{1}+Y_{2}) -G(Y_{1}) - G(Y_{2}) \neq 0 $ in $D_{G}[Y_{1},Y_{2}]$.
		\item[(ii)] If $G(Y_{1}+Y_{2}) - G(Y_{1}) - G(Y_{2}) \neq 0$ in $D_{G}[Y_{1},Y_{2}]$ and $G(0) = 0$, then $\operatorname{deg} G(Y_{1}) >1 $.
		\item[(iii)] $G(Y_{1}+Y_{2}) = G(Y_{1}) + G(Y_{2})$ if and only if there exist finitely many $p_i,q_i \in D$ such that $G(Y_{1}) = \sum_i p_iY_{1}q_i$.
		\item[(iv)] If $s= \operatorname{deg} G(Y) \geq 1$ and $G(0) = 0$, then $G^{(s)}(Y_1, \cdots, Y_s)$ is non-zero and multilinear in $D_{G}[Y_1,\cdots,Y_s]$.
	\end{itemize}
\end{rem}

Given additive maps $g_{ij}:D \rightarrow D$, and let $G(y) = \sum_s G_s(y)$, for $y \in D$, where
\begin{equation*}
	G_s(y) = \sum_{i} p_{i1s}g_{i1s}(y)p_{i2s} \cdots p_{iss}g_{is}(y)p_{is+1s}
\end{equation*}
for all $y \in D$. For a positive integer $t$, we define
\begin{equation*}
	G^{(t)}(y_1, \cdots, y_t) = \sum_s G_s^{(t)}(y_1, \cdots, y_t)
\end{equation*}
for all $y_i \in D$. Now we are in the position to prove the first main theorem. To prove Theorem \ref{mainB}, we need the following result.
\begin{fact} \label{t3}
	\cite[Theorem $2.3$]{tk1} Suppose $g$ is an additive map on a division ring $D$ and $G(Y), H(Y) \in D_{G}[Y]$ such that $G(y)g(y) = H(y)$ for all $y \in D$. If $G(Y)$ is a non-zero, then either $g$ is an elementary operator or $D$ is finite-dimensional over its center.
\end{fact}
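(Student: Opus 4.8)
The plan is to argue via the stated dichotomy directly. If $D$ is finite-dimensional over $Z(D)$ there is nothing to prove, so I assume $D$ is infinite-dimensional over its center; by Theorem \ref{m1} this means $D$ is \emph{not} a GPI-algebra, i.e.\ no nonzero element of $D\{Y_1,\dots,Y_k\}$ vanishes under all substitutions from $D$. Under this hypothesis I aim to show that $g$ is an elementary operator. Since $g$ is additive, $g(0)=0$, so evaluating the identity at $y=0$ gives $H(0)=G(0)g(0)=0$. Write $s=\deg G\,(\ge 0)$, which is defined because $G(Y)\neq 0$, and let $G^{\mathrm{top}}$ denote the homogeneous degree-$s$ component of $G$, which is nonzero.

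The heart of the argument is a linearization. Set $P(y)=G(y)g(y)$, so $P(y)=H(y)$ for all $y\in D$, and consequently the polarizations agree, $P^{(k)}=H^{(k)}$ as functions on $D^{k}$ for every $k$. Using additivity of $g$ (whose polarizations of order $\ge 2$ vanish, only the first-order term surviving) together with the inclusion–exclusion description of the operators $G^{(k)}$, I will compute $P^{(k)}$ explicitly. Tracking how the constant term $G(0)$ enters and cancels, I expect to obtain two facts: first, $P^{(k)}\equiv 0$ for every $k>s+1$; and second, $P^{(s+1)}(x_1,\dots,x_{s+1})=\sum_{j=1}^{s+1}\Gamma(x_1,\dots,\hat{x_j},\dots,x_{s+1})\,g(x_j)$, where $\Gamma:=(G^{\mathrm{top}})^{(s)}$ is a nonzero \emph{multilinear} generalized polynomial in $s$ variables, its nonvanishing being Remark \ref{r2}(iv) applied to $G^{\mathrm{top}}$. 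From the first fact and $P^{(k)}=H^{(k)}$, the polarization $H^{(k)}$ vanishes on $D$ for $k>s+1$; if $\deg H>s+1$, its top polarization $H^{(\deg H)}$ would then be a nonzero generalized polynomial vanishing identically on $D$, contradicting that $D$ is not a GPI-algebra. Hence $\deg H\le s+1$, so $H^{(s+1)}$ is multilinear (or zero).

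To finish I combine the second fact with $P^{(s+1)}=H^{(s+1)}$ and invoke the non-GPI hypothesis once more. Since $\Gamma\neq 0$ is multilinear and $D$ is not a GPI-algebra, there exist $b_2,\dots,b_{s+1}\in D$ with $c:=\Gamma(b_2,\dots,b_{s+1})\neq 0$. Substituting $x_1=y$ and $x_j=b_j$ for $j\ge 2$, multilinearity makes every term other than the $j=1$ term linear in $y$ with fixed coefficients in $D$: each $\Gamma(y,b_2,\dots,\hat{b_j},\dots,b_{s+1})\,g(b_j)$ and the multilinear value $H^{(s+1)}(y,b_2,\dots,b_{s+1})$ are of the form $\sum_p \alpha_p\, y\,\beta_p$. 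This yields $c\,g(y)=\sum_p \gamma_p\, y\,\delta_p$ for all $y\in D$ with $c\in D^{*}$ fixed, whence $g(y)=\sum_p (c^{-1}\gamma_p)\,y\,\delta_p$ is an elementary operator. I expect the main obstacle to be establishing the second fact: carrying out the polarization of the product $G(y)g(y)$ carefully and verifying that the contributions of the constant term $G(0)$ cancel exactly, so that the leading coefficient is the clean multilinear object $(G^{\mathrm{top}})^{(s)}$ and the order-$(s+1)$ polarization collapses to the single Leibniz-type sum displayed above.
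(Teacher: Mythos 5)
Your proof is correct. Note that the paper itself states Fact \ref{t3} without proof (it is imported as a known result, in the spirit of \cite{tk1}), but your argument — polarize $P(y)=G(y)g(y)$, observe that only the top homogeneous part $G^{\mathrm{top}}$ survives in $P^{(s+1)}$, use Remark \ref{r2}(iv) together with the non-GPI hypothesis to bound $\deg H$ and to find $b_2,\dots,b_{s+1}$ with $\Gamma(b_2,\dots,b_{s+1})\neq 0$, then solve for $g(y)$ — is exactly the one-sided specialization of the linearization strategy the paper uses to prove Theorem \ref{mainB}, so it is essentially the same approach. The only presentational wrinkle is the degenerate case $s=0$, where Remark \ref{r2}(iv) does not literally apply; there $\Gamma$ is just the nonzero constant $G(Y)=b$ and your final step reduces to $g(y)=b^{-1}H(y)$ with $\deg H\le 1$, which is how the paper handles that case inside the proof of Theorem \ref{mainB}.
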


\begin{proof}[\textbf{Proof of Theorem \ref{mainB}}]
Suppose $\operatorname{deg} G_1(Y) = s_1$ and $\operatorname{deg} G_2(Y) = s_2$. If $s_2=0$, then there exists $b^{'} \in D^{*}$ such that $G_2(Y) = b^{'}$ and we get
	\begin{equation*}
		G_1(y)g(y)b^{'} = H(y).
	\end{equation*}
	 Let $g^{'}$ be an additive map on $D$ such that $g^{'}(y) = g(y)b^{'}$. 
	 In this case, we get our conclusion by Fact \ref{t3}. Using the same techniques, we see the same conclusion holds for $s_1 = 0$.  
	
	Now we consider the case when $s_1 = s_2 = 0$. Then there exist $b,b^{'} \in D^{*}$ such that $G_1(Y) = b$ and $G_2(Y) = b^{'}$.
	Consequently, we have $g(y) = b^{-1} H(y) b^{'-1}$ for all $y \in D$. Since $g$ is additive,  it follows that $H(y_{1}+y_{2}) = H(y_{1}) + H(y_{2}) $ and $H(0) = 0$.
	
	If $\operatorname{deg} H(Y) \leq 1$, then we have finitely many non-zero $p_i,q_i \in D$ such that $H(Y) = \sum_i p_i Y q_i$. Thus, $g$ is an elementary operator.
 
 On the other hand, if $\operatorname{deg} H(Y) > 1$, then by Remark \ref{r2} (i), we have
	\begin{equation} \label{ts1}
		H(Y_{1} + Y_{2}) - H(Y_{1}) - H(Y_{2}) \neq 0
	\end{equation}
	in $D_{G}[Y_{1},Y_{2}]$. By additivity of $g$, equation (\ref{ts1}) is a non-trivial GPI for $D$ and hence $D$ is a GPI-algebra. It follows from Theorem \ref{m1} that $[D: Z(D)] < \infty$.
	
	Finally, consider the case $\operatorname{deg} G_1(Y) = s_1 \geq 1$ and $\operatorname{deg} G_2(Y) = s_2 \geq 1$. We write
	\begin{equation*}
		G_{i}(Y) = G_{i0}(Y) + G_{i1}(Y),
	\end{equation*}
	where $G_{i1}(Y)$ is the homogeneous part of $G_{i}(Y)$ of degree $s_i$, $i \in \{1,2\}$.
	Let 
	\begin{center}
		$A_{\ell k}(y) = G_{1\ell}(y)g(y)G_{2k}(y)$ \hspace{0.3cm} and \hspace{0.3cm} $A_{11}(y) = G_{11}(y) g(Y)G_{21}(y)$
	\end{center}
	for all $y \in D$, where $(\ell,k) \neq (1,1)$ and $\ell,k \in  \{0,1\}$. Since $\operatorname{deg} G_{10}(Y) < s_1$ and $\operatorname{deg} G_{20}(Y) < s_2 $, we have
	\begin{equation*}
		A_{\ell k}^{(s_1+s_2+1)}(y_1, \cdots, y_{s_1 + s_2 + 1})= 0
	\end{equation*}
	and 
	\begin{align*}
		A_{11}^{(s_1+s_2+1)} & (y_1, \cdots, y_{s_1 + s_2 + 1})= \\              & \left( \sum_{j=1}^{s_1 + s_2+ 1} G^{(s_1)}_{11}(y_1,\cdots, \Hat{y}_{j},\Hat{y}_{j+1}, \cdots, \Hat{y}_{j+s_2}, \cdots, y_{s_1+s_2+1})g(y_j) \right)\\
		& \left( G_{21}^{(s_2)}(y_{j+1}, \cdots, y_{j+s_2}) \right)
	\end{align*}
	for all $y_i \in D$, where $y_{j+i} = y_{j+i-s_{1}-s_{2}-1}$ for $j+i>s_1+s_2+1$, $i \in \{ 1,2, \cdots, s_2 \}$, $(\ell,k) \neq (1,1)$ and $\ell,k \in \{0,1\}$. It is given that $G_1(y)g(y)G_2(y) = H(y) $ for all $y \in D$, we obtain 
	\begin{equation} \label{ts2}
		\begin{aligned}
			H^{(s_1+s_2+1)} & (y_1, \cdots, y_{s_1 + s_2 + 1})= \\              & \left( \sum_{j=1}^{s_1 + s_2+ 1} G^{(s_1)}_{11}(y_1,\cdots, \Hat{y}_{j},\Hat{y}_{j+1}, \cdots, \Hat{y}_{j+s_2}, \cdots, y_{s_1+s_2+1})g(y_j) \right)\\
			& \left( G_{21}^{(s_2)}(y_{j+1}, \cdots, y_{j+s_2}) \right)
		\end{aligned}
	\end{equation}
	for all $y_i \in D$, where $y_{j+i} = y_{j+i-s_{1}-s_{2}-1}$ for $j+i>s_1+s_2+1$ and $i \in \{ 1,2, \cdots, s_2 \}$. 
	By Remark \ref{r2} (iv), we get
	\begin{center}
		$G^{(s_1)}_{11}(Y_1,\cdots, \Hat{Y}_{j},\Hat{Y}_{j+1}, \cdots, \Hat{Y}_{j+s_2}, \cdots, Y_{s_1+s_2+1})$
	\end{center}
	and
	\begin{center}
		$G_{21}^{(s_2)}(Y_{j+1}, \cdots, Y_{j+s_2})$
	\end{center}
	are multilinear and non-zero.
	Thus, we re-write the equation (\ref{ts2}) as
	\begin{equation} \label{ts3}
		\begin{aligned}
			G^{(s_1)}_{11}&(y_{s_2+1},\cdots, y_{s_1+s_2})g(y_{s_1+s_2+1})
			G_{21}^{(s_2)}(y_{1}, \cdots, y_{s_2})=\\
			& \sum_{j=1}^{s_{1}+ s_{2}}b_{j}(y_{1}, \cdots, y_{s_1 +s_2}) y_{s_1 +s_2+1}c_{j}(y_{1}, \cdots, y_{s_1+s_2}) + \\
			&H^{(s_1+s_2+1)} (y_1, \cdots, y_{s_1 + s_2 + 1})
		\end{aligned}
	\end{equation}
	for all $y_i \in D$, where $b_{j},c_{j}$ are generalized monomials in $y_{1}, \cdots, y_{s_1+s_2},g(y_{1}),\\
	 \cdots, g(y_{s_1+s_2})$. Consider the following cases.
	
	\textbf{Case $1$:} If $G^{(s_1)}_{11}(y_{s_2+1},\cdots, y_{s_2+s_1}) =0$ and $G_{21}^{(s_2)}(y_{1}, \cdots, y_{s_2}) = 0$ for all
	$y_{1}, \cdots, y_{s_1+s_2} \in D$, then $D$ is a division GPI-algebra. It follows from Theorem \ref{m1} that $[D: Z(D)] < \infty$.
	
	\textbf{Case $2$:} If $G^{(s_1)}_{11}(z_{s_2+1},\cdots, z_{s_2+s_1}) \neq 0$ for some $z_{s_2+1}, \cdots, z_{s_1+s_2} \in D$ and $G_{21}^{(s_2)}(y_{1}, \cdots, y_{s_2}) = 0$ for all $y_{1}, \cdots, y_{s_2} \in D$, then by using the same argument as in Case $1$, we get our conclusion.
	
	\textbf{Case $3$:} If $G^{(s_1)}_{11}(y_{s_2+1},\cdots, y_{s_2+s_1}) = 0$ for all $y_{s_2+1}, \cdots, y_{s_1+s_2} \in D$ and\\
	 $G_{21}^{(s_2)}(z_{1}, \cdots, z_{s_2}) \neq 0$ for some $z_{1}, \cdots, z_{s_2} \in D$, then by using the same argument as in Case $1$, we get $[D: Z(D)] < \infty$.
	
	\textbf{Case $4$:} Suppose $G^{(s_1)}_{11}(z_{s_2+1},\cdots, z_{s_2+s_1}) \neq 0$  and $G_{21}^{(s_2)}(z_{1}, \cdots, z_{s_2}) \neq 0$ for some $z_{1}, \cdots, z_{s_1+s_2} \in D$. Let
	\begin{center}
		$\Tilde{b}_{j}= b_{j}(z_1, \cdots, z_{s_1+s_2})$ \hspace{0.3cm} and \hspace{0.3cm} $\Tilde{c}_{j}(z_{1}, \cdots, z_{s_1+s_2})$
	\end{center}
	for all $j$. Let $t = G^{(s_1)}_{11}(z_{s_2+1},\cdots, z_{s_2+s_1}) $ and $ t^{'}= G_{21}^{(s_2)}(z_{1}, \cdots, z_{s_2})$. Given equation $(\ref{ts3})$, we obtain
	\begin{equation} \label{ts4}
		g(y_{s_1+s_2})= \sum_{j} t^{-1} \Tilde{b}_{j}y_{s_1+s_2+1} \Tilde{c}_{j}t^{'-1} + E(y_{s_1+s_2})
	\end{equation}
	for all $y_{s_1+s_2+1} \in D$, where
	\begin{equation*}
		E(Y_{s_1+s_2+1}) = t^{-1}H^{(s_1 + s_2 + 1)}(z_{1}, \cdots, z_{s}, Y_{s_1+s_2+1})t^{'-1} \in D_G[ Y_{s_1 + s_2 +1} ].
	\end{equation*}
	If $E(Y_{s_1+s_2+1})$ is linear in $Y_{s_1 +s_2+1}$, then equation (\ref{ts4}) implies that $g$ is an elementary operator. If $ \operatorname{deg}(E(Y_{s_1+s_2+1})) > 1$, then the additivity of $g$ together with equation (\ref{ts4}), implies that $D$ satisfies the GPI
	\begin{equation*}
		E(y_{s_1 +s_2+1} + y_{s_1+s_2+2})-  E(y_{s_1 + s_2+1}) -   E(y_{s+2}).      
	\end{equation*}
	By Remark \ref{r2} (i), $ E(Y_{s_1 + s_2 +1} + Y_{s_1 + s_2 +2})-  E(Y_{s_1 + s_2 +1}) -   E(Y_{s_1 + s_2 +2}) \neq 0 $. By Theorem \ref{m1}, we conclude that $D$ is finite-dimensional over $Z(D)$.
\end{proof}

\begin{thm} \label{tc5}
	Suppose $g_{1}, \cdots, g_{s}$ are additive maps on a division ring $D$ and 
	$G_{1j}(Y_{1}, \cdots, Y_{s})$, $G_{2j}(Y_{1}, \cdots, Y_{s})$, $H(Y_{1}, \cdots, Y_{s}) \in D_{G}[Y_{1}, \cdots, Y_{s}]$, for $j=1, \cdots, s$  such that
	\begin{equation} \label{ts5}
		\sum_{j=1}^{s}G_{1j}(y_{1}, \cdots, y_{s})g_{j}(y_{j})G_{2j}(y_{1}, \cdots,y_{s}) = H(y_{1}, \cdots, y_{s})
	\end{equation}
	for all $y_i \in D$.
	 If $[D: Z(D)] = \infty$ and $G_{1j}(Y_{1}, \cdots, Y_{s}) \neq 0$, $G_{2j}(Y_{1}, \cdots, Y_{s}) \neq 0$ for some $j$, then $g_{j}$ is an elementary operator.
\end{thm}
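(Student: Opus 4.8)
The plan is to reduce the multivariable identity \eqref{ts5} to the one-variable situation already settled in Theorem \ref{mainB}, by freezing all the variables except $Y_j$. Throughout, fix an index $j$ for which $G_{1j}(Y_1,\cdots,Y_s)\neq 0$ and $G_{2j}(Y_1,\cdots,Y_s)\neq 0$. Since $[D:Z(D)]=\infty$, the contrapositive of Theorem \ref{m1} shows that $D$ is \emph{not} a GPI-algebra; equivalently, no non-zero element of $D\{Y_1,\cdots,Y_s\}$ vanishes under every substitution from $D$.

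The first and most delicate step is to specialize the remaining variables so that \emph{both} $G_{1j}$ and $G_{2j}$ survive simultaneously. Consider the product $Q=G_{1j}(Y_1,\cdots,Y_s)\,G_{2j}(Y_1,\cdots,Y_s)$. Because $D\{Y_1,\cdots,Y_s\}$ is a domain and both factors are non-zero, $Q\neq 0$. As $D$ is not a GPI-algebra, there is a tuple $(z_1,\cdots,z_s)\in D^s$ with $Q(z_1,\cdots,z_s)\neq 0$; and since $F\mapsto F(z_1,\cdots,z_s)$ is a $Z(D)$-algebra homomorphism $D\{Y_1,\cdots,Y_s\}\to D$, this value equals $G_{1j}(z_1,\cdots,z_s)\,G_{2j}(z_1,\cdots,z_s)$, forcing both $G_{1j}(z_1,\cdots,z_s)\neq 0$ and $G_{2j}(z_1,\cdots,z_s)\neq 0$. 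Now freeze $z_k$ for $k\neq j$ and put $\tilde G_{ij}(Y_j)=G_{ij}(z_1,\cdots,z_{j-1},Y_j,z_{j+1},\cdots,z_s)\in D\{Y_j\}$ for $i=1,2$. Evaluating at $Y_j=z_j$ returns a non-zero value, so $\tilde G_{1j}(Y_j)\neq 0$ and $\tilde G_{2j}(Y_j)\neq 0$ in $D\{Y_j\}$.

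With these $z_k$ ($k\neq j$) fixed, I would substitute $y_k=z_k$ into \eqref{ts5} while keeping $y_j$ free. For each $k\neq j$ the element $g_k(z_k)\in D$ is now a constant, so the $k$-th summand becomes $\tilde G_{1k}(y_j)\,g_k(z_k)\,\tilde G_{2k}(y_j)$, which is a generalized polynomial in $y_j$ once $g_k(z_k)$ is absorbed as a coefficient; the same is true of $H(z_1,\cdots,Y_j,\cdots,z_s)$. Collecting every term except the $j$-th on the right-hand side yields
\begin{equation*}
	\tilde G_{1j}(y_j)\,g_j(y_j)\,\tilde G_{2j}(y_j)=\hat H(y_j),\qquad \hat H(Y_j)\in D\{Y_j\},
\end{equation*}
for all $y_j\in D$. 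This is exactly the hypothesis of Theorem \ref{mainB} for the additive map $g_j$, with the non-zero data $\tilde G_{1j}$ and $\tilde G_{2j}$. Theorem \ref{mainB} then forces either $g_j$ to be an elementary operator or $[D:Z(D)]<\infty$; the latter is ruled out by hypothesis, so $g_j$ is an elementary operator, as required.

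The crux is the simultaneous non-vanishing in the second paragraph: specializing $G_{1j}$ and $G_{2j}$ independently would in general give two different tuples and would fail to isolate a single one-variable identity in $Y_j$. The product $Q=G_{1j}G_{2j}$, together with the domain property of $D\{Y_1,\cdots,Y_s\}$, the failure of $D$ to be a GPI-algebra, and the fact that substitution is an algebra homomorphism, is precisely what compels one common tuple to work for both factors; once that is secured, the reduction to Theorem \ref{mainB} is routine bookkeeping.
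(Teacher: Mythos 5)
Your proof is correct and follows essentially the same route as the paper: specialize all variables except $Y_j$ at a common tuple where both $G_{1j}$ and $G_{2j}$ are non-zero, and apply Theorem \ref{mainB} to the resulting one-variable identity in $Y_j$. Your justification of the simultaneous non-vanishing (via the product $G_{1j}G_{2j}$ being non-zero in the domain $D\{Y_{1},\cdots,Y_{s}\}$ and hence not a GPI for $D$) is in fact more explicit than the paper's, which simply asserts that a common tuple exists once neither factor is a GPI.
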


\begin{proof}
	Suppose $G_{1t}(Y_{1}, \cdots, Y_{s}) \neq 0$ and
	$ G_{2t}(Y_{1}, \cdots, Y_{s}) \neq 0 $ for some $t$. We claim that $g_{t}$ is an elementary operator. If  either $ G_{1t}(Y_{1}, \cdots, Y_{s}) $ or $ G_{2t}(Y_{1}, \cdots, Y_{s}) $ is a GPI for $D$, then by Theorem \ref{m1} we have $[D: Z(D)] < \infty$, a contradiction. Thus we have $ G_{1t}(Y_{1}, \cdots, Y_{s}) $ and $ G_{2t}(Y_{1}, \cdots, Y_{s}) $ are not GPI's for $D$. Then there exist elements $p_1, \cdots, p_s \in D$ such that $G_{1t}(p_{1}, \cdots, p_{s}) \neq 0 $ and $ G_{2t}(p_{1}, \cdots, p_{s}) \neq 0 $. By equation (\ref{ts5}), we have
	\begin{equation} \label{ts6}
		\begin{aligned}
			G_{1t}& (p_{1},\cdots,y_{t}, \cdots, p_{s})g_{t}(y_{t})G_{2t}(p_{1},\cdots,y_{t}, \cdots, p_{s})= \\
			& -\sum_{j=1, j\neq t}^{s} G_{1j}(p_{1},\cdots,y_{t}, \cdots, p_{s})g_{j}(p_{j}) G_{2j}(p_{1},\cdots, y_{t}, \cdots, p_{s}) \\
			& + H(p_{1},\cdots, y_{t}, \cdots, p_{s})
		\end{aligned}
	\end{equation}
	for all $y_t \in D$. Note that
	\begin{equation*}
		\begin{aligned}
			G_{1t}& (p_{1},\cdots, Y_{t}, \cdots, p_{s}),G_{2t}(p_{1},\cdots, Y_{t}, \cdots, p_{s}), \\
			& -\sum_{j=1, j \neq t}^{s} G_{1j}(p_{1},\cdots, Y_{t}, \cdots, p_{s})g_{j}(p_{j}) G_{2j}(p_{1},\cdots,Y_{t}, \cdots, p_{s}) \\
			& + H(p_{1},\cdots,Y_{t}, \cdots, p_{s}) \in D_{G}[Y_{t}].
		\end{aligned}
	\end{equation*}
	Since $ G_{1t}(p_{1},\cdots,Y_{t}, \cdots, p_{s}) \neq 0,G_{2t}(p_{1},\cdots, Y_{t}, \cdots, p_{s}) \neq 0 $ and $[D:Z(D)] = \infty$, by Theorem \ref{mainB}, we have $g_{t}$ is an elementary operator.
\end{proof}

The following theorem is a generalization of Theorem \ref{mainB}.

\begin{thm}
	Suppose $g_{1}, \cdots, g_{s}$ are additive maps on a division ring $D$ and $G_{1j}(Y)$, $G_{2j}(Y) \in D_{G}[Y] \backslash \{0\}$  such that
	\begin{equation} \label{teql7}
		\sum_{j=1}^{n} G_{1j}(y)g_{j}(y)G_{2j}(y) = H(y)
	\end{equation}
	for all $y\in D$. If $[D: Z(D)] = \infty$ and $\operatorname{deg} (G_{1j}(Y) +G_{2j}(Y)) \neq \operatorname{deg} (G_{1i}(Y) + G_{2i}(Y)) $ for $i \neq j$, then all $g_{j}$ are elementary operators.
\end{thm}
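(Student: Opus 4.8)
The plan is to argue by induction on the number $s$ of summands, using the distinct-degree hypothesis to peel off one map at a time. For the base case $s=1$ the identity reads $G_{11}(y)g_{1}(y)G_{21}(y)=H(y)$ with $G_{11}(Y),G_{21}(Y)\neq 0$, so Theorem \ref{mainB} applies directly; since $[D:Z(D)]=\infty$ the finite-dimensional alternative is excluded, and $g_{1}$ must be an elementary operator. This is the engine of the whole argument, so the real work is organizing the inductive step so that it always reduces, after one elimination, to an identity of the same shape with one fewer map.

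For the inductive step I would attach to each summand its total functional degree $N_{j}=\operatorname{deg}G_{1j}(Y)+\operatorname{deg}G_{2j}(Y)+1$, namely the number of occurrences of $y$ in $G_{1j}(y)g_{j}(y)G_{2j}(y)$. The hypothesis that the degrees $\operatorname{deg}(G_{1j}(Y)+G_{2j}(Y))$ are pairwise distinct is what lets me linearly order the summands and single out one index $t$ whose term sits at the extreme (maximal) end of this ordering. I then apply the multilinearization operator of order $N_{t}$ to the vanishing expression $F(y):=\sum_{j}G_{1j}(y)g_{j}(y)G_{2j}(y)-H(y)$. Exactly as in the computation of $A_{lk}^{(s_{1}+s_{2}+1)}$ inside the proof of Theorem \ref{mainB}, every competing summand of strictly smaller degree is annihilated by this linearization, so what survives is a multilinear identity in the variables $y_{1},\dots,y_{N_{t}}$ that involves the single map $g_{t}$ only, with coefficient polynomials assembled from the top homogeneous parts of $G_{1t}$ and $G_{2t}$. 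By Remark \ref{r2}$(iv)$ these coefficients are non-zero and multilinear.

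At this point I would invoke Theorem \ref{tc5} with the map $g_{t}$ placed in every slot (that is, taking all the maps in that theorem equal to $g_{t}$ and letting the index there run over the linearization variables $y_{1},\dots,y_{N_{t}}$). Since the surviving coefficient polynomials are non-zero and $[D:Z(D)]=\infty$, Theorem \ref{tc5} forces $g_{t}$ to be an elementary operator, say $g_{t}(y)=\sum_{i}p_{i}yq_{i}$. Substituting this elementary form back into the $t$-th summand converts $G_{1t}(y)g_{t}(y)G_{2t}(y)$ into a genuine element of $D\{Y\}$, which I move to the right-hand side and absorb into $H$. The outcome is an identity $\sum_{j\neq t}G_{1j}(y)g_{j}(y)G_{2j}(y)=H'(y)$ with $H'(Y)\in D\{Y\}$, involving $s-1$ maps whose degrees $\operatorname{deg}(G_{1j}(Y)+G_{2j}(Y))$ are still pairwise distinct; the inductive hypothesis then gives that each remaining $g_{j}$ is elementary, completing the induction.

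The step I expect to be the main obstacle is the isolation itself: one must verify carefully that the distinct-degree hypothesis really does pin down a \emph{unique} summand of maximal linearization order, so that the order-$N_{t}$ multilinearization kills all competitors and leaves an identity in $g_{t}$ alone. Reconciling the degree $\operatorname{deg}(G_{1j}(Y)+G_{2j}(Y))$ appearing in the hypothesis with the total functional degree $N_{j}$ that actually governs when a term is annihilated is the delicate point and must be checked, not assumed. The remaining bookkeeping—matching the surviving term to the pattern $\sum_{j}G^{(s_{1})}_{11}(\cdots)g(y_{j})G^{(s_{2})}_{21}(\cdots)$ from the proof of Theorem \ref{mainB}, confirming non-vanishing of the coefficients so that Theorem \ref{tc5} applies, and confirming that the elementary substitution for $g_{t}$ yields a legitimate element of $D\{Y\}$ so the reduced identity keeps the required form—is routine but should be carried out explicitly.
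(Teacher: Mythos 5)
Your proposal follows essentially the same route as the paper: induct on the number of summands, single out the summand of maximal total degree $s_{1n}+s_{2n}$, multilinearize at order $s_{1n}+s_{2n}+1$ so that all lower-degree summands vanish and only the top homogeneous parts of $G_{1n},G_{2n}$ survive around $g_{n}$, apply Theorem \ref{tc5} to conclude $g_{n}$ is elementary, then substitute its elementary form back into the identity and absorb it into $H$ before invoking the inductive hypothesis. The "delicate point" you flag — that the hypothesis $\operatorname{deg}(G_{1j}(Y)+G_{2j}(Y))$ must be read as the sum $\operatorname{deg}G_{1j}(Y)+\operatorname{deg}G_{2j}(Y)$ governing which terms the linearization annihilates — is exactly how the paper's proof treats it, so your argument matches theirs.
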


\begin{proof}
	We proceed the proof by induction on $n$. The case $ n = 1$ is done by Theorem \ref{mainB}. Suppose $n >1$, and the conclusion holds for $n-1$. Let $s_{kj} = \operatorname{deg} G_{kj}(Y)$, for $j= 1, \cdots, n$ and $k \in \{1,2\}$. We assume $s_{1j} + s_{2j} < s_{1n} + s_{2n} $, for $j = 1, \cdots, n-1$. If $E_{k1}(Y)$ is the homogeneous part of $G_{kn}(Y)$ of degree $s_{kn}$, $k \in \{1,2\}$, then we write
	\begin{equation*}
		G_{kn}(Y) = E_{k0}(Y) + E_{k1}(Y).
	\end{equation*}
 Since $s_{1j} + s_{2j} < s_{1n} + s_{2n} $ for $j = 1, \cdots, n-1$, applying the same argument given in the proof of Theorem \ref{mainB}, we get
	\begin{equation*}
		\begin{aligned}
			H^{(s_{1n}+s_{2n}+1)} & (y_{1}, \cdots, y_{s_{1n} + s_{2n} + 1})= \\              & \sum_{j=1}^{s_{1n} + s_{2n}+ 1} E^{(s_{1n})}_{11}(y_{1},\cdots, \Hat{y}_{j},\Hat{y}_{j+1}, \cdots, \Hat{y}_{j+s_2}, \cdots, y_{s_{1n}+s_{2n}+1})g_{n}(y_{j})\\
			& E_{21}^{(s_{2n})}(y_{j+1}, \cdots, y_{j+s_{2n}})
		\end{aligned}
	\end{equation*}
	for all $y_i \in D$, where $y_{j+i} = y_{j+i-s_{1}-s_{2}-1}$ for $j+i>s_{1n}+s_{2n}+1$ and $i \in \{ 1,2, \cdots, s_{2n} \}$. As $E_{11}^{(s_{1n})}(Y_{1}, \cdots, Y_{s}) \neq 0$ and $E_{21}^{(s_{2n})}(Y_{1}, \cdots, Y_{s}) \neq 0$, by Theorem \ref{tc5} it follows that $g_{n}$ is an elementary operator. Thus there exist finitely many non-zero $p_{i}, q_{i} \in D$ such that $g_n(y) = \sum_{i} p_{i} y q_{i}$ for all $y \in D$. From equation \eqref{teql7}, we get
	\begin{equation*}
		\sum_{j=1}^{n-1} G_{1j}(y)g_{j}(y)G_{2j}(y) = H(y) - G_{1n}(y)(\sum_{i} p_{i}y q_{i})G_{2n}(y)
	\end{equation*}
	for all $y \in D$. Note that $ H(Y) - G_{1n}(Y)(\sum_{i} p_{i}Y q_{i})G_{2n}(Y) \in D _{G}[Y]$. By the induction hypothesis, we obtain that $g_{1}, \cdots, g_{n}$ are elementary operators.
\end{proof}

\section{The identity $g(y^2) = w_{1}(y)g(y)+ g(y)w_{2}(y) +  w_{3}(y)g(y)w_{4}(y)$}

A particular case of the identity $g(y^2) = w_{1}(y)g(y)+ g(y)w_{2}(y) +  w_{3}(y)g(y)w_{4}(y)$ appears in Section $4$.

\begin{lem} \label{tl6}
	Suppose $g_{1}$ and $g_{2}$ are additive maps on a division ring $D$ with $\operatorname{char}(D) \neq 2$. If $w_{1},w_{2},w_{3},w_{4}: D \rightarrow D$ are maps such that $g_{1}(y^2) = w_{1}(y)g_{2}(y)+ g_{2}(y)w_{2}(y) +  w_{3}(y)g_{2}(y)w_{4}(y)$ for all $ y \in D$, then 
	\begin{equation*}
		\begin{aligned}
					&\Bigl(2w_{1}(2y_{1})- 2w_{1}(y_{1}) -w_{1}(y_{1}+y_{2}) -w_{1}(y_{1}-y_{2}) \Bigl)g_{2}(y_{1}) \\
		& + g_{2}(y_{1})\Bigl(2w_{2}(2y_{1}) - 2w_{2}(y_{1}) - w_{2}(y_{1}+y_{2}) - w_{2}(y_{1}-y_{2})\Bigl)\\
		&+ \Bigl(2w_{1}(y_{2}) -w_{1}(y_{1}+y_{2}) + w_{1}(y_{1}-y_{2})\Bigl)g_{2}(y_{2})\\
		& + g_{2}(y_{2}) \Bigl(2w_{2}(y_{2}) -w_{2}(y_{1}+y_{2}) + w_{2}(y_{1}-y_{2})\Bigl)\\
		& + 2w_{3}(2y_{1})g_{2}(y_{1})w_{4}(2y_{1}) - w_{3}(y_{1}+y_{2})g_{2}(y_{1}+y_{2})w_{4}(y_{1}+y_{2})\\
		& - w_{3}(y_{1}-y_{2})g_{2}(y_{1}-y_{2})w_{4}(y_{1}-y_{2}) - 2w_{3}(y_{1})g_{2}(y_{1})w_{4}(y_{1}) \\
		& + 2w_{3}(y_{2})g_{2}(y_{2})w_{4}(y_{2}) = 0
		\end{aligned}
	\end{equation*}
	for all $y_{1},y_{2} \in D$.
\end{lem}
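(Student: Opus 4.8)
The plan is to obtain the stated identity by a single polarization of the hypothesis, using nothing more than the additivity of $g_1$ and $g_2$. The starting point is the purely algebraic identity in $D$,
\[
(2y_1)^2 - (y_1+y_2)^2 - (y_1-y_2)^2 - 2y_1^2 + 2y_2^2 = 0,
\]
valid for all $y_1,y_2 \in D$. One checks it by expanding $(y_1\pm y_2)^2 = y_1^2 \pm y_1y_2 \pm y_2y_1 + y_2^2$ and $(2y_1)^2 = 4y_1^2$: the cross terms $y_1y_2$ and $y_2y_1$ cancel between the two squares, and the coefficients of $y_1^2$ and $y_2^2$ are $4-1-1-2=0$ and $-1-1+2=0$ respectively.

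Since $g_1$ is additive, it annihilates this element; applying $g_1$ termwise, and using $g_1(2z)=2g_1(z)$, gives
\[
g_1\bigl((2y_1)^2\bigr) - g_1\bigl((y_1+y_2)^2\bigr) - g_1\bigl((y_1-y_2)^2\bigr) - 2g_1(y_1^2) + 2g_1(y_2^2) = 0 .
\]
Next I would replace each summand $g_1(z^2)$ by the hypothesis $g_1(z^2)=w_1(z)g_2(z)+g_2(z)w_2(z)+w_3(z)g_2(z)w_4(z)$, evaluated at $z = 2y_1,\, y_1+y_2,\, y_1-y_2,\, y_1,\, y_2$. At the same time I expand the inner occurrences of $g_2$ by additivity, namely $g_2(2y_1)=2g_2(y_1)$ and $g_2(y_1\pm y_2)=g_2(y_1)\pm g_2(y_2)$, while leaving $w_1,w_2,w_3,w_4$ untouched, as these are merely arbitrary maps.

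Finally I would collect terms according to which factor $g_2(y_1)$ or $g_2(y_2)$ they contain. The contributions in which $g_2$ enters linearly, coming from the $w_1(\cdot)g_2(\cdot)$ and $g_2(\cdot)w_2(\cdot)$ parts, assemble into the four bracketed coefficients of $g_2(y_1)$ and $g_2(y_2)$ on the left and on the right; for example the coefficient of $g_2(y_1)$ on the left gathers $2w_1(2y_1)$ from the $2y_1$ term, $-w_1(y_1+y_2)$ and $-w_1(y_1-y_2)$ from the two polarized terms, and $-2w_1(y_1)$, producing exactly $2w_1(2y_1)-2w_1(y_1)-w_1(y_1+y_2)-w_1(y_1-y_2)$. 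The triple products $w_3(\cdot)g_2(\cdot)w_4(\cdot)$ are carried along intact (no additivity is applied to the outer factors) and yield the remaining five summands. Matching these against the claimed expression is the entire content of the lemma; there is no genuine obstacle beyond the sign bookkeeping, the only delicate point being the $g_2(y_2)$-coefficients, where the $(y_1+y_2)$ and $(y_1-y_2)$ terms enter with opposite signs and therefore reinforce rather than cancel. I note that, since no division by $2$ occurs, the argument in fact uses only additivity and never the hypothesis $\operatorname{char} D \neq 2$, which will instead be needed in the subsequent applications of this lemma.
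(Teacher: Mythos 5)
Your proof is correct and is essentially the paper's argument in a more compact form: the paper first linearizes $g_1(y^2)$ to get a formula for $g_1(y_1y_2+y_2y_1)$, substitutes $(y_1+y_2,y_1-y_2)$, and compares with $2g_1(y_1^2)-2g_1(y_2^2)$, which is exactly your single application of $g_1$ to the identity $(2y_1)^2-(y_1+y_2)^2-(y_1-y_2)^2-2y_1^2+2y_2^2=0$. Your closing observation that only additivity, and not $\operatorname{char} D\neq 2$, is used here is accurate and consistent with the paper's computation.
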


\begin{proof}
	Let $y_{1},y_{2} \in D$. We compute
	\begin{equation}
		\begin{aligned} \label{ts8}
			g_{1}(y_{1}y_{2}+y_{2}y_{1})  &=  g_{1}\Bigl((y_{1}+y_{2})^2 - y_{1}^2 - y_{2}^2\Bigl)\\
			& = w_{1}(y_{1} + y_{2})g_{2}(y_{1} +y_{2}) + g_{2}(y_{1} + y_{2})w_{2}(y_{1}+y_{2}) \\ 
			&+ w_{3}(y_{1}+y_{2})g_{2}(y_{1}+y_{2})w_{4}(y_{1}+y_{2}) - w_{1}(y_{1})g_{2}(y_{1})\\
			& - g_{2}(y_{1})w_{2}(y_{1}) - w_{1}(y_{2})g_{2}(y_{2}) - g_{2}(y_{2})w_{2}(y_{2})\\
			& - w_{3}(y_{1})g_{2}(y_{1})w_{4}(y_{1})- w_{3}(y_{2})g_{2}(y_{2})w_{4}(y_{2}).
		\end{aligned}
	\end{equation}
	Replacing $(y_{1},y_{2})$ by $(y_{1}+y_{2}, y_{1}-y_{2})$ in equation (\ref{ts8}), we get
	\begin{equation*}
		\begin{aligned}
			g_{1}\Bigl((y_{1}+y_{2})(y_{1}-y_{2})& + (y_{1}-y_{2})(y_{1}+y_{2})\Bigl)\\
			&=2w_{1}(2y_{1})g_{2}(y_{1}) + 2g_{2}(y_{1})w_{2}(2y_{1})- w_{1}(y_{1}+y_{2})g_{2}(y_{1}+y_{2})\\
			&- g_{2}(y_{1}+y_{2})w_{2}(y_{1}+y_{2})- w_{1}(y_{1}-y_{2})g_{2}(y_{1}-y_{2})\\
			&- g_{2}(y_{1}-y_{2})w_{2}(y_{1}-y_{2})+
			2w_{3}(2y_{1})g_{2}(y_{1})w_{4}(2y_{1}) \\
			&- w_{3}(y_{1}+y_{2})g_{2}(y_{1}+y_{2})w_{4}(y_{1}+y_{2})\\
			&- w_{3}(y_{1}-y_{2})g_{2}(y_{1}-y_{2})w_{4}(y_{1}-y_{2})\\
			& = (2w_{1}(2y_{1}) -w_{1}(y_{1}+y_{2}) -w_{1}(y_{1}-y_{2}) )g_{2}(y_{1}) \\
			&  + g_{2}(y_{1})(2w_{2}(2y_{1}) - w_{2}(y_{1}+y_{2}) - w_{2}(y_{1}-y_{2}))\\
			& + (-w_{1}(y_{1}+y_{2}) + w_{1}(y_{1}-y_{2}))g_{2}(y_{2}) \\
			& + g_{2}(y_{2}) (-w_{2}(y_{1}+y_{2}) + w_{2}(y_{1}-y_{2})) +
				2w_{3}(2y_{1})g_{2}(y_{1})w_{4}(2y_{1}) \\
			&- w_{3}(y_{1}+y_{2})g_{2}(y_{1}+y_{2})w_{4}(y_{1}+y_{2})\\
			&- w_{3}(y_{1}-y_{2})g_{2}(y_{1}-y_{2})w_{4}(y_{1}-y_{2}).
		\end{aligned}
	\end{equation*}
	On the other hand, we have
	\begin{equation*}
		\begin{aligned}
			&g_{1}\Bigl((y_{1}+y_{2})(y_{1}-y_{2}) + (y_{1}-y_{2})(y_{1}+y_{2})\Bigl)\\
			& = 2g_{1}(y_{1}^{2}) - 2g_{1}(y_{2}^{2}) \\
			& = 2w_{1}(y_{1})g_{2}(y_{1}) + 2g_{2}(y_{1})w_{2}(y_{1}) - 2w_{1}(y_{2})g_{2}(y_{2}) -2g_{2}(y_{2})w_{2}(y_{2})\\
			 & + 2w_{3}(y_{1})g_{2}(y_{1})w_{4}(y_{1}) - 2w_{3}(y_{2})g_{2}(y_{2})w_{4}(y_{2}).  
		\end{aligned}
	\end{equation*}
	Comparing the above two expressions, we get
	\begin{equation*}
		\begin{aligned}
			&\Bigl(2w_{1}(2y_{1})- 2w_{1}(y_{1}) -w_{1}(y_{1}+y_{2}) -w_{1}(y_{1}-y_{2})\Bigl)g_{2}(y_{1}) \\
			& + g_{2}(y_{1})\Bigl(2w_{2}(2y_{1}) - 2w_{2}(y_{1}) - w_{2}(y_{1}+y_{2}) - w_{2}(y_{1}-y_{2})\Bigl)\\
			&+ \Bigl(2w_{1}(y_{2}) -w_{1}(y_{1}+y_{2}) + w_{1}(y_{1}-y_{2})\Bigl)g_{2}(y_{2})\\
			& + g_{2}(y_{2}) \Bigl(2w_{2}(y_{2}) -w_{2}(y_{1}+y_{2}) + w_{2}(y_{1}-y_{2})\Bigl)\\
			& + 2w_{3}(2y_{1})g_{2}(y_{1})w_{4}(2y_{1}) - w_{3}(y_{1}+y_{2})g_{2}(y_{1}+y_{2})w_{4}(y_{1}+y_{2})\\
			& - w_{3}(y_{1}-y_{2})g_{2}(y_{1}-y_{2})w_{4}(y_{1}-y_{2}) - 2w_{3}(y_{1})g_{2}(y_{1})w_{4}(y_{1}) \\
			& + 2w_{3}(y_{2})g_{2}(y_{2})w_{4}(y_{2}) = 0,
		\end{aligned}
	\end{equation*}
	as desired.
\end{proof}

The following gives an application of Theorem \ref{tc5}.

\begin{thm} \label{ttl2}
	Suppose $g_{1},g_{2}$ are additive maps on a division ring $D$ with $\operatorname{char}(D) \neq 2$ and $w_{1}(Y),w_{2}(Y),w_{3}(Y),w_{4}(Y) \in D_{G}[Y]$ such that $$g_{1}(y^{2}) =  w_{1}(y)g_{2}(y)+ g_{2}(y)w_{2}(y) +  w_{3}(y)g_{2}(y)w_{4}(y)$$ for all $y \in D$. If $\operatorname{deg} w_{1}(Y), \operatorname{deg} w_{2}(Y), \operatorname{deg} w_{3}(Y), \operatorname{deg} w_{4}(Y) > 1$, then either $D$ is finite-dimensional over $Z(D)$ or $g_{2}$ is an elementary operator.
\end{thm}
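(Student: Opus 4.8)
Since the first alternative in the conclusion is exactly $[D:Z(D)] < \infty$, the plan is to assume $[D:Z(D)] = \infty$ and deduce that $g_2$ is an elementary operator. Write $s_i = \operatorname{deg} w_i$ and let $\overline{w}_i \in D\{Y\}$ denote the leading homogeneous component of $w_i$; then each $\overline{w}_i \neq 0$, and by hypothesis $s_1,s_2,s_3,s_4 > 1$. I would first substitute the defining identity into Lemma \ref{tl6} and expand every $g_2(y_1 \pm y_2)$ by additivity, converting its conclusion into a two-variable functional identity $\Phi(y_1,y_2) = 0$ whose terms split into three families: the products $w_3(y_1 \pm y_2)g_2(y_1)w_4(y_1 \pm y_2)$, $w_3(2y_1)g_2(y_1)w_4(2y_1)$ and $w_3(y_1)g_2(y_1)w_4(y_1)$, which carry $g_2(y_1)$ in the middle; the mirror family carrying $g_2(y_2)$ in the middle; and the $w_1,w_2$-terms, each carrying $g_2(y_1)$ or $g_2(y_2)$ with a coefficient on one side only.

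The crux is to extract from $\Phi = 0$ a single multihomogeneous component that isolates one clean product $G_1(y_1,y_2)\,g_2(y_1)\,G_2(y_1,y_2)$ whose scalar coefficient is kept nonzero by $\operatorname{char} D \neq 2$. This extraction is legitimate because $[D:Z(D)] = \infty$ forces $D$ not to be a GPI-algebra (Theorem \ref{m1}), so after full multilinearization the pieces attached to distinct monomial shapes in $D\{Y_1,Y_2\}$ vanish independently, and every nonzero generalized polynomial is not a GPI. Three bookkeeping facts drive the choice of shape: the $g_2(y_2)$-family has $g_2(y_2)$, not $g_2(y_1)$, in the middle, so it never meets a shape whose middle symbol is $g_2(y_1)$; the $w_1,w_2$-terms have an empty block on one side, so they never meet a shape with both blocks of positive degree; and $w_3(2y_1)g_2(y_1)w_4(2y_1)$, $w_3(y_1)g_2(y_1)w_4(y_1)$ contain no $y_2$ at all.

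Concretely I would split on the parities of $s_3,s_4$. (i) If $s_3$ is even, take the shape ``pure-$y_2$ block of degree $s_3$, then $g_2(y_1)$, then pure-$y_1$ block of degree $s_4$''; only $-w_3(y_1 \pm y_2)g_2(y_1)w_4(y_1 \pm y_2)$ survive, giving $-(1 + (-1)^{s_3})\overline{w}_3(y_2)g_2(y_1)\overline{w}_4(y_1) = -2\,\overline{w}_3(y_2)g_2(y_1)\overline{w}_4(y_1) = 0$. (ii) If $s_4$ is even, the mirror shape gives $-2\,\overline{w}_3(y_1)g_2(y_1)\overline{w}_4(y_2) = 0$. (iii) If $s_3$ and $s_4$ are both odd, then $s_3+s_4$ is even and the shape ``pure-$y_2$ block of degree $s_3$, then $g_2(y_1)$, then pure-$y_2$ block of degree $s_4$'' gives $-(1 + (-1)^{s_3+s_4})\overline{w}_3(y_2)g_2(y_1)\overline{w}_4(y_2) = -2\,\overline{w}_3(y_2)g_2(y_1)\overline{w}_4(y_2) = 0$. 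In every case the surviving scalar is $-2$, nonzero precisely because $\operatorname{char} D \neq 2$.

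To finish, specialize $y_2 = v$. Since $D$ is not a GPI-algebra, neither $\overline{w}_3$ nor $\overline{w}_4$ is a GPI, so in cases (i), (ii) there is a $v$ with $\overline{w}_3(v) \neq 0$, resp.\ $\overline{w}_4(v) \neq 0$, reducing to $g_2(y_1)\overline{w}_4(y_1) = 0$, resp.\ $\overline{w}_3(y_1)g_2(y_1) = 0$, whence Theorem \ref{mainB} (with $G_1 = 1$, resp.\ $G_2 = 1$) makes $g_2$ an elementary operator. In case (iii), as $D\{Y\}$ is a domain the product $\overline{w}_3(Y)\overline{w}_4(Y)$ is a nonzero generalized polynomial, hence not a GPI, so some $v$ satisfies $\overline{w}_3(v) \neq 0 \neq \overline{w}_4(v)$, and invertibility in the division ring then forces $g_2 \equiv 0$, which is trivially elementary. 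The step I expect to be the main obstacle is making the shape-component extraction fully rigorous: one must justify that a single monomial-shape component of $\Phi = 0$ vanishes on its own (this is exactly where $[D:Z(D)] = \infty$ enters, via Theorem \ref{m1}) and check that no stray pieces of the $w_1,w_2$-terms or of the $g_2(y_2)$-family leak into the chosen shape. The parity split is what guarantees the surviving scalar is $-2$ rather than $0$, and is the precise point at which $\operatorname{char} D \neq 2$ is indispensable.
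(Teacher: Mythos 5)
There is a genuine gap at the step you yourself flag as the main obstacle: the assertion that, in the two-variable identity obtained from Lemma \ref{tl6}, ``the pieces attached to distinct monomial shapes vanish independently'' because $[D:Z(D)]=\infty$ and Theorem \ref{m1}. Theorem \ref{m1} (Martindale) applies to honest generalized polynomial identities, i.e.\ expressions in $D\{Y_1,Y_2\}$ with \emph{fixed} coefficients; your expression $\Phi(y_1,y_2)$ contains the unknown additive map $g_2$, and for such functional identities shape-independence is exactly the nontrivial content one is trying to prove, not a free consequence of $D$ not being GPI. Indeed the independence is false in general: if $g_2$ is an elementary operator, say $g_2(y)=y$, then $y_2\,g_2(y_1)\,y_1 - y_2y_1\,g_2(y_1)$ vanishes identically although the two terms have different shapes, so components of different shapes can cancel against one another. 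Your argument would therefore have to be run under the additional assumption that $g_2$ is \emph{not} elementary, and even then the extraction requires the functional-identity machinery (the analogue of solving for $g$ after multilinearization, as in the proof of Theorem \ref{mainB}), not merely Theorem \ref{m1}. A related technical point: you cannot isolate a multihomogeneous component by the usual central rescaling $y_2\mapsto\lambda y_2$, since $g_2$ is only additive, so even the mechanics of ``full multilinearization'' down to a single bidegree component need care.

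The paper avoids this entirely. After Lemma \ref{tl6} and expanding $g_2(y_1\pm y_2)$ by additivity, the resulting identity is already in the sandwich form $\sum_j G_{1j}(y_1,y_2)\,g_2(y_{u_j})\,G_{2j}(y_1,y_2)=0$ treated by Theorem \ref{tc5}; the term $2w_3(2y_1)g_2(y_1)w_4(2y_1)$ has both coefficients $2w_3(2Y_1)$ and $w_4(2Y_1)$ nonzero (here $\operatorname{char}D\neq 2$ enters), so Theorem \ref{tc5} directly yields that $g_2$ is an elementary operator when $[D:Z(D)]=\infty$, with no homogeneous-component extraction and no parity case analysis. Your endgame (specializing $y_2$, invoking that nonzero generalized polynomials are not GPIs on an infinite-dimensional $D$, and finishing with Theorem \ref{mainB}) is fine, and your parity bookkeeping producing the factor $-2$ is arithmetically correct, but the proof does not stand until the extraction step is replaced by an appeal to Theorem \ref{tc5} or an equivalent functional-identity argument.
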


\begin{proof}
	Assume that $\operatorname{deg} w_{1}(Y), \operatorname{deg} w_{2}(Y), \operatorname{deg} w_{3}(Y), \operatorname{deg} w_{4}(Y) > 1$ and that $[D:Z(D)] = \infty$. By Remark \ref{r2} (i), we have 
	$ w_{1}(Y_{1}+Y_{2}) - w_{1}(Y_{1}) - w_{1}(Y_{2}), w_{2}(Y_{1}+Y_{2}) - w_{2}(Y_{1}) - w_{2}(Y_{2}), w_{3}(Y_{1}+Y_{2}) - w_{3}(Y_{1}) - w_{3}(Y_{2}), 	w_{4}(Y_{1}+Y_{2}) - w_{4}(Y_{1}) - w_{4}(Y_{2}) \in D_{G}[Y_{1},Y_{2}] \backslash \{0\}. $
	It follows from Lemma \ref{tl6} that 
	\begin{equation*}
		\begin{aligned}
			&\Bigl(2w_{1}(2y_{1})- 2w_{1}(y_{1}) -w_{1}(y_{1}+y_{2}) -w_{1}(y_{1}-y_{2})\Bigl)g_{2}(y_{1}) \\
			& + g_{2}(y_{1})\Bigl(2w_{2}(2y_{1}) - 2w_{2}(y_{1}) - w_{2}(y_{1}+y_{2}) - w_{2}(y_{1}-y_{2})\Bigl)\\
			&+ \Bigl(2w_{1}(y_{2}) -w_{1}(y_{1}+y_{2}) + w_{1}(y_{1}-y_{2})\Bigl)g_{2}(y_{2})\\
			& + g_{2}(y_{2}) \Bigl(2w_{2}(y_{2}) -w_{2}(y_{1}+y_{2}) + w_{2}(y_{1}-y_{2})\Bigl)\\
			& + 2w_{3}(2y_{1})g_{2}(y_{1})w_{4}(2y_{1}) - w_{3}(y_{1}+y_{2})g_{2}(y_{1}+y_{2})w_{4}(y_{1}+y_{2})\\
			& - w_{3}(y_{1}-y_{2})g_{2}(y_{1}-y_{2})w_{4}(y_{1}-y_{2}) - 2w_{3}(y_{1})g_{2}(y_{1})w_{4}(y_{1}) \\
			& + 2w_{3}(y_{2})g_{2}(y_{2})w_{4}(y_{2}) = 0.
		\end{aligned}
	\end{equation*}
	If $g_{2}$ is not an elementary operator, then by Theorem \ref{tc5}, we get $w_{3}(Y) = 0$ and $w_{4}(Y) = 0$, which leads to a contradiction. Hence, $g_{2}$ is an elementary operator.  
\end{proof}

Before proving next lemma, we need the following fact.

\begin{fact} \cite[Theorem $2(a)$]{tl1} \label{ll3}
	Suppose $\{ p_{1}, \cdots, p_{s}\}$ and $\{ q_{1}, \cdots , q_{s}\}$ are two linearly independent subsets of a division ring $D$ over $Z(D)$. Then $\sum_{j} p_{j} Y q_{j} \neq 0 \in D_{G}[Y]$.
\end{fact}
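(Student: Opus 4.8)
The plan is to reduce the statement to a standard fact about tensor products of vector spaces. Since $D$ is a division ring, its center $Z = Z(D)$ is a field and $D$ is a $Z$-vector space. The key observation is that the homogeneous component of $D\{Y\}$ that is linear in $Y$ — namely the $Z$-span of the monomials $aYb$ with $a,b \in D$ — is naturally isomorphic, as a $Z$-module, to the tensor product $D \otimes_Z D$ via the correspondence $aYb \leftrightarrow a \otimes b$. Indeed, the assignment $(a,b) \mapsto aYb$ is $Z$-bilinear because elements of $Z$ are central in $D\{Y\}$ and commute with $Y$: for $z \in Z$ one has $(az)Yb = aY(zb) = z(aYb)$. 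By the universal property of the free product defining $D\{Y\}$ (see \cite{C2,M1}), this bilinear map induces an isomorphism onto the degree-one part, so no relation beyond those of $D \otimes_Z D$ can force a linear combination $\sum_j p_j Y q_j$ to vanish. In particular, $\sum_j p_j Y q_j = 0$ in $D\{Y\}$ if and only if $\sum_j p_j \otimes q_j = 0$ in $D \otimes_Z D$.

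Granting this identification, I would finish with a coordinate-functional argument. Since $\{q_1, \ldots, q_s\}$ is linearly independent over $Z$, I extend it to a $Z$-basis of $D$ and let $\phi : D \to Z$ be the $Z$-linear coordinate functional with $\phi(q_1) = 1$ and $\phi(q_j) = 0$ for $j \geq 2$. Applying the induced map $\mathrm{id}_D \otimes \phi : D \otimes_Z D \to D \otimes_Z Z \cong D$ to $\sum_j p_j \otimes q_j$ yields
\[
(\mathrm{id}_D \otimes \phi)\Bigl( \sum_{j} p_j \otimes q_j \Bigr) = \sum_{j} p_j \, \phi(q_j) = p_1 .
\]
Because $\{p_1, \ldots, p_s\}$ is linearly independent over $Z$, it contains no zero element, so $p_1 \neq 0$. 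Hence $\sum_j p_j \otimes q_j \neq 0$, and therefore $\sum_j p_j Y q_j \neq 0$ in $D\{Y\}$, as claimed. One may equally extend both $\{p_j\}$ and $\{q_j\}$ to $Z$-bases and observe that the products $p_i \otimes q_k$ form a basis of $D \otimes_Z D$, so that $\sum_j p_j \otimes q_j$ is a combination of distinct basis vectors with all coefficients equal to $1$.

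The computational content here is negligible; the only genuine point requiring care — and the step I expect to be the main obstacle — is the first one: rigorously identifying the linear part of the free product $D\{Y\}$ with $D \otimes_Z D$ and verifying that this identification is \emph{faithful}, i.e. that the defining relations of $D\{Y\}$ impose exactly the bilinearity relations of the tensor product and nothing more. This is precisely the structural input encoded in the theory of generalized polynomial rings, and it is the reason the result is quoted as Fact~\ref{ll3} from \cite{tl1} rather than reproved from scratch. Once that identification is in hand, the nonvanishing is a one-line consequence of linear independence over the field $Z(D)$.
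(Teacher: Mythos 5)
The paper offers no proof of this statement at all: it is quoted verbatim as a known fact with the citation \cite[Theorem $2(a)$]{tl1}, so there is nothing internal to compare your argument against. Your proposal is a correct self-contained proof, and the two steps are sound. The identification of the degree-one (in $Y$) homogeneous component of the free product $D\{Y\} = D \ast_{Z(D)} Z(D)[Y]$ with $D \otimes_{Z(D)} D$ via $aYb \leftrightarrow a \otimes b$ is exactly right; the surjectivity of this correspondence is immediate, and the faithfulness you flag as the main obstacle is the normal-form (basis) theorem for free products of algebras over a field, which is the content of Cohn's structure theory that the paper itself invokes elsewhere (\cite{coh1}, \cite{cohn2}). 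Granting that, your coordinate-functional computation $(\mathrm{id}_D \otimes \phi)\bigl(\sum_j p_j \otimes q_j\bigr) = p_1 \neq 0$ is airtight, and your alternative remark (extend both families to bases, so the $p_i \otimes q_k$ are distinct basis vectors with coefficient $1$) is equally valid. Two small observations: your argument actually shows the slightly stronger statement that linear independence of the $q_j$ together with some $p_j \neq 0$ already suffices, so the hypothesis that both sets be independent is not fully used; and your closing paragraph is mildly circular in tone, since you justify deferring the faithfulness of the identification by the fact that the paper cites the result, when the honest justification is the free-product basis theorem itself. Neither point is a gap.
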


\begin{lem} \label{tll4}
	 Suppose $\{ p_{1}, \cdots, p_{s}\}$ and $\{ q_{1}, \cdots , q_{s}\}$ are two linearly independent subsets of a division ring $D$ over $Z(D)$. If $w_{1}(Y), w_{2}(Y),w_{1}(3), w_{4}(Y) \in D_{G}[Y]$ with $\operatorname{deg} w_{1}(Y), \operatorname{deg} w_{2}(Y),\operatorname{deg} w_{3}(Y), \operatorname{deg} w_{4}(Y) > 1$ and $\operatorname{deg} w_{1}(Y) \neq \operatorname{deg} w_{2}(Y) \neq \operatorname{deg} w_{3}(Y)+ \operatorname{deg} w_{4}(Y)$, then  
	\begin{equation*}
		\sum_{j=1}^{s} p_{j} Y^{2} q_{j} - \sum_{j=1}^{s}\Bigl( w_{1}(Y) p_{j} Y q_{j} + p_{j} Y q_{j} w_{2}(Y)\Bigl) - \sum_{j=1}^{s} w_{3}(Y) p_{j} Y q_{j} w_{4}(Y) \neq 0.
 	\end{equation*}
\end{lem}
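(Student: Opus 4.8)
The plan is to show that the generalized polynomial in the statement, which I will call $F(Y)$, is nonzero by isolating its top-degree homogeneous component. Write $d_i=\deg w_i(Y)$, so that $d_1,d_2,d_3,d_4\ge 2$ by hypothesis. The four sums constituting $F(Y)$ have top degrees $2$, $d_1+1$, $d_2+1$ and $d_3+d_4+1$ respectively; since each $d_i\ge 2$, the first sum $\sum_j p_jY^2q_j$ (homogeneous of degree $2$) is strictly dominated by the other three (top degrees $\ge 3$), so it contributes nothing to the top part of $F(Y)$. Consequently it suffices to analyse the homogeneous component of $F(Y)$ in degree $N:=\max\{d_1+1,\;d_2+1,\;d_3+d_4+1\}\ge 3$, and to prove it does not vanish.

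The key tool is the standard identification of the homogeneous degree-$n$ part of $D\{Y\}$ with the tensor power $D^{\otimes_{Z(D)}(n+1)}$, under which a monomial $c_0Yc_1\cdots Yc_n$ corresponds to $c_0\otimes\cdots\otimes c_n$. Under this identification, forming $\sum_j(\,\cdot\,)\,p_jYq_j$ from a homogeneous element amounts to applying $\mathrm{id}\otimes\psi$ to its last tensor slot, where $\psi:D\to D\otimes_{Z(D)}D$ is $\psi(c)=\sum_j cp_j\otimes q_j$. Because $D$ is a division ring and $\{q_1,\dots,q_s\}$ is linearly independent over $Z(D)$, the map $\psi$ is injective: $\psi(c)=0$ forces $cp_j=0$ for all $j$, hence $c=0$. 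As $Z(D)$ is a field, $\mathrm{id}\otimes\psi$ is again injective, and this is exactly the mechanism behind Fact \ref{ll3}. It follows that for any nonzero homogeneous $\overline{w}(Y)\in D\{Y\}$ the sum $\sum_j \overline{w}(Y)\,p_jYq_j$ is nonzero; the analogous assertions for $\sum_j p_jYq_j\,\overline{w}(Y)$ and for $\sum_j \overline{w}(Y)\,p_jYq_j\,\overline{v}(Y)$ hold by inserting the corresponding injective map at the appropriate slot, using linear independence of $\{p_j\}$ and of $\{q_j\}$ together with the absence of zero divisors.

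Under the natural reading that $d_1+1$, $d_2+1$ and $d_3+d_4+1$ are mutually distinct, exactly one of the three sums attains the degree $N$, so the degree-$N$ part of $F(Y)$ equals one of $-\sum_j \overline{w_1}(Y)p_jYq_j$, $-\sum_j p_jYq_j\,\overline{w_2}(Y)$, or $-\sum_j \overline{w_3}(Y)p_jYq_j\,\overline{w_4}(Y)$, where the bars denote top homogeneous parts; each is nonzero by the previous paragraph, whence $F(Y)\ne 0$. The step I expect to be the main obstacle is the borderline case $d_1=d_3+d_4$ (permitted by the literal chain $d_1\ne d_2\ne d_3+d_4$), in which $\sum_j \overline{w_1}(Y)p_jYq_j$ and $\sum_j \overline{w_3}(Y)p_jYq_j\,\overline{w_4}(Y)$ share the top degree $N$ and could a priori cancel. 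I would resolve this by comparing the two contributions slot by slot in $D^{\otimes_{Z(D)}(N+1)}$: in the first sum the distinguished variable $Y$ is the last of the $N$ occurrences, so the factors $p_j,q_j$ occupy the last two slots, and the final slot lies in $\mathrm{span}_{Z(D)}\{q_j\}$ entangled with $j$; in the second sum the distinguished $Y$ is followed by the $d_4\ge 2$ further occurrences coming from $\overline{w_4}(Y)$, so $p_j,q_j$ sit in slots $d_3+1,d_3+2\le N-1$ while the final slot is a fixed coefficient of $\overline{w_4}$ independent of $j$. Since the $j$-dependent slots are thus disjoint, a monomial genuinely carrying a $q_j$ in its last slot (which survives by linear independence of $\{q_j\}$) cannot be matched by any monomial of the second sum, so the two cannot cancel. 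In every case the degree-$N$ component of $F(Y)$ is nonzero, and therefore $F(Y)\ne 0$, as claimed.
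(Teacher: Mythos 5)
Your handling of the generic case --- where $d_1+1$, $d_2+1$ and $d_3+d_4+1$ are pairwise distinct --- is correct and is essentially the paper's own argument: the paper likewise reduces to the unique top-degree homogeneous component, which is one of $w_{1l_1}(Y)\sum_j p_jYq_j$, $\bigl(\sum_j p_jYq_j\bigr)w_{2l_2}(Y)$ or $w_{3l_3}(Y)\bigl(\sum_j p_jYq_j\bigr)w_{4l_4}(Y)$, and concludes that it is nonzero by combining the fact that $D\{Y\}$ is a domain (Cohn) with Fact \ref{ll3}. Your tensor-slot injectivity argument is a legitimate substitute for the appeal to the domain property, and the observation that the degree-$2$ term $\sum_j p_jY^2q_j$ is strictly dominated is the same as in the paper.

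The genuine error is in your treatment of the borderline case $d_1=d_3+d_4$. The claimed slot-by-slot disjointness is not an invariant of the tensor decomposition: in $D^{\otimes_{Z(D)}(N+1)}$ the last slot of a monomial of $\sum_j\overline{w_1}(Y)p_jYq_j$ contains the element $q_j$ and the last slot of a monomial of $\sum_j\overline{w_3}(Y)p_jYq_j\overline{w_4}(Y)$ contains a coefficient of $\overline{w_4}$, but these are merely elements of $D$ and nothing prevents them from coinciding, so the two contributions can cancel. Indeed the paper records a counterexample immediately after the lemma: take $s=1$, $p_1=q_1=1$, $w_1(Y)=-Y^2-Y^4$, $w_2(Y)=Y+Y^2$, $w_3(Y)=w_4(Y)=Y^2$. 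Then $d_1=4=d_3+d_4$ and $d_2=2$, so the literal chain $d_1\neq d_2\neq d_3+d_4$ holds, yet $Y^2-w_1(Y)Y-Yw_2(Y)-w_3(Y)Yw_4(Y)=0$; in degree $5$ the contribution $+Y^5$ from the $w_1$-term cancels the contribution $-Y^5$ from the $w_3,w_4$-term, exactly the cancellation your argument asserts is impossible. The lemma is only true under the reading that $d_1$, $d_2$ and $d_3+d_4$ are pairwise distinct (which is how the paper's proof actually uses the hypothesis); your proof of that case stands, but the extra case you claim to cover is false, so that portion of the argument must be removed rather than repaired.
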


\begin{proof}
	Assume that $\operatorname{deg} w_{i}(Y) = \ell_{i} > 1$, for $i \in \{1,2,3,4\}$. Write $w_{i}(Y) = \sum_{k_{\ell_{i}}=0}^{\ell_{i}} w_{ik_{\ell_{i}}}(Y)$, where $w_{ik_{\ell_{i}}}$ denotes the homogeneous part of $w_{i}(Y)$ of degree $k_{\ell_{i}}$. If no such part exists for a given $k_{\ell_{i}}$, then $w_{ik_{\ell_{i}}}(Y)=0$. In particular, $w_{i\ell_{i}}(Y) \neq 0$ for $i \in \{1,2,3,4\}$. If possible, assume that
	\begin{equation*}
			\sum_{j=1}^{s} p_{j} Y^{2} q_{j} - \sum_{j=1}^{s}\Bigl( w_{1}(Y) p_{j} Y q_{j} + p_{j} Y q_{j} w_{2}(Y)\Bigl) - \sum_{j=1}^{s} w_{3}(Y) p_{j} Y q_{j} w_{4}(Y) = 0.
	\end{equation*}
	Then
	\begin{equation*}
		\begin{aligned}
				\sum_{j=1}^{s} p_{j} Y^{2} q_{j} - &\sum_{k_{\ell_{1}}=0}^{\ell_{1}} w_{1k_{\ell_{1}}}(Y)(\sum_{j=1}^{s} p_{j} Y q_{j}) - (\sum_{j=1}^{s} p_{j} Y q_{j}) \sum_{k_{\ell_{2}=0}}^{\ell_{2}} w_{2k_{\ell_{2}}}(Y)\\
			& -\sum_{k_{\ell_{3}}=0}^{\ell_{3}} w_{3k_{\ell_{3}}}(Y)(\sum_{j=1}^{s} p_{j} Y q_{j}) \sum_{k_{\ell_{4}=0}}^{\ell_{4}} w_{4k_{\ell_{4}}}(Y) = 0.
		\end{aligned}
	\end{equation*}
	Thus, for $\ell_{i} > 1$ where $i \in \{1,2,3,4\}$, it implies that either $w_{1\ell_{1}}(Y)(\sum_{j=1}^{s}p_{j}Yq_{j})=0$ or $(\sum_{j=1}^{s}p_{j}Yq_{j})w_{2k\ell_{2}}(Y)=0$ or $w_{3\ell_{3}}(Y)(\sum_{j=1}^{s}p_{j}Yq_{j})w_{4\ell_{4}}(Y)=0$ depending on whether $\ell_{1} = max \{\ell_{1},\ell_{2},\ell_{3}+\ell_{4}\}$ or $\ell_{2} = max \{\ell_{1},\ell_{2},\ell_{3}+\ell_{4}\}$ or $\ell_{3}+\ell_{4} = max \{\ell_{1},\ell_{2},\ell_{3}+\ell_{4}\}$ respectively. Since $D_{G}[Y]$ is a domain, it follows from (\cite[Corollary, p.$379$]{coh1}, \cite[Theorem $2.4$]{cohn2}) that $\sum_{j=1}^{s} p_{j} Y q_{j} = 0$, a contradiction by Fact \ref{ll3}.
\end{proof}

	If we take $\ell_{1} = \ell_{2}$ and $\ell_{3}+\ell_{4} = max \{\ell_{1},\ell_{2},\ell_{3}+\ell_{4}\}$, then by using similar arguments as in Lemma \ref{tll4}, we obtain
	\begin{equation*}
		\sum_{j=1}^{s} p_{j} Y^{2} q_{j} - \sum_{j=1}^{s}\Bigl( w_{1}(Y) p_{j} Y q_{j} + p_{j} Y q_{j} w_{2}(Y)\Bigl) - \sum_{j=1}^{s} w_{3}(Y) p_{j} Y q_{j} w_{4}(Y) \neq 0.
	\end{equation*}
	But if either $\ell_{1}= \ell_{2}=  max \{\ell_{1},\ell_{3}+\ell_{4}\}$ or $\ell_{1}=\ell_{2}=\ell_{3}+\ell_{4}$, then we have counter-examples.
	
	\begin{example}
		Suppose $w_{1}(Y) = Y + Y^{4}  $, $w_{2}(Y) = -2Y^{4}, w_{3}(Y) = w_{4}(Y)= Y^{2}$ and $s=1$ with $p_{1} = q_{1}=1$. Then $Y^{2} - w_{1}(Y)Y - Yw_{2}(Y) - w_{3}(Y)Yw_{4}(Y) = 0$.
	\end{example}
	
	\begin{example}
	   Suppose $w_{1}(Y) = Y + Y^{5} $, $w_{2}(Y) = -Y^{5}-Y^{4}, w_{3}(Y) = w_{4}(Y)= Y^{2}$ and $s=1$ with $p_{1} = q_{1}=1$. Then $Y^{2} - w_{1}(Y)Y - Yw_{2}(Y) - w_{3}(Y)Yw_{4}(Y) = 0$. 
	\end{example}
	Suppose $\ell_{1}  \neq \ell_{2}$ and $\ell_{1} = \ell_{3}+\ell_{4}$. Then without loss of generality, we assume that $\ell_{1}>\ell_{2}$. The following example shows that the conclusion of Lemma \ref{tll4} is not true.
	 
	 \begin{example}
	 	Suppose $w_{1}(Y) = -Y^{2} - Y^{4} $, $w_{2}(Y) = Y + Y^{2}, w_{3}(Y) = w_{4}(Y)= Y^{2}$ and $s=1$ with $p_{1} = q_{1}=1$. Then it is easy to verify that $Y^{2} - w_{1}(Y)Y - Yw_{2}(Y) - w_{3}(Y)Yw_{4}(Y) = 0$.
	 \end{example}
	   
	   \begin{rem} \label{rem lemm1}
	   Let $\{ p_{1}, \cdots, p_{s}\}$ and $\{ q_{1}, \cdots , q_{s}\}$ are two linearly independent subsets of a division ring $D$ over $Z(D)$. If $w_{1}(Y) =w_{2}(Y)= Y^{2\ell}, w_{3}(Y)= 4Y^{\ell}$ and  $w_{4}(Y) = Y^{\ell}$, where $\ell>1$ is an integer, then
	   \begin{equation*}
	   	\sum_{j=1}^{s} p_{j} Y^{2} q_{j} - \sum_{j=1}^{s} \Bigl(Y^{\ell} p_{j} Y q_{j} + p_{j} Y q_{j} Y^{\ell}\Bigl) - 4\sum_{j=1}^{s}Y^{\ell}p_{j}Yq_{j}Y^{\ell} \neq  0.
	   \end{equation*}
	\end{rem}
	\begin{proof}
	If \begin{equation*}
		\sum_{j=1}^{s} p_{j} Y^{2} q_{j} - \sum_{j=1}^{s} \Bigl(Y^{\ell} p_{j} Y q_{j} + p_{j} Y q_{j} Y^{\ell}\Bigl) - 4\sum_{j=1}^{s}Y^{\ell}p_{j}Yq_{j}Y^{\ell}=  0,
	\end{equation*} 
	then it follows that $\sum_{j=1}^{s} p_{j} Y^{2} q_{j}=0$, a contradiction by Fact \ref{ll3}.
	\end{proof}
	  
	It is natural to ask the following question.
	   
	  \begin{question}
	  	Suppose $\{ p_{1}, \cdots, p_{s}\}$ and $\{ q_{1}, \cdots , q_{s}\}$ are two linearly independent subsets of a division ring $D$ over $Z(D)$ and $w_{i}(Y)\in D_{G}[Y]$ with $\operatorname{deg} w_{i}(Y) > 1$, where $i\in \{1,2,3,4\}$.  Is it possible to conclude either 
	  		\begin{equation*}
	  		\sum_{j=1}^{s} p_{j} Y^{2} q_{j} - \sum_{j=1}^{s}\Bigl( w_{1}(Y) p_{j} Y q_{j} + p_{j} Y q_{j} w_{2}(Y)\Bigl) - \sum_{j=1}^{s} w_{3}(Y) p_{j} Y q_{j} w_{4}(Y) \neq 0
	  	\end{equation*}
	  	or $D$ is finite-dimensional over its center?
	  \end{question}

\begin{thm} \label{tcl5}
	Suppose $g$ is an additive map on a division ring $D$ with $\operatorname{char}(D) \neq 2$ and $w_{1}(Y), w_{2}(Y),w_{3}(Y),w_{4}(Y) \in D_{G}[Y]$ such that $$g(y^2) = w_{1}(y)g(y) + g(y)w_{2}(y) + w_{3}(y)g(y)w_{4}(y)$$ for all $y \in D$. If $\operatorname{deg} w_{1}(Y) > 1, \operatorname{deg} w_{2}(Y) > 1,\operatorname{deg} w_{3}(Y) > 1, \operatorname{deg} w_{4}(Y) > 1$ and $\operatorname{deg} w_{1}(Y) \neq \operatorname{deg} w_{2}(Y) \neq \operatorname{deg} w_{3}(Y)+ \operatorname{deg} w_{4}(Y)$, then $D$ is finite-dimensional over $Z(D)$.
\end{thm}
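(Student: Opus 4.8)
The plan is to argue by contradiction. Suppose $[D:Z(D)] = \infty$ and aim to produce a nontrivial generalized polynomial identity for $D$, which by Theorem~\ref{m1} would force $[D:Z(D)] < \infty$. One should first dispose of the degenerate case $g = 0$, for which the identity reads $0 = 0$ and is empty of content; the substantive statement is for $g \neq 0$, which I assume from now on. My first move is to apply Theorem~\ref{ttl2} with $g_1 = g_2 = g$. All four of $w_1,w_2,w_3,w_4$ have degree exceeding $1$, so the hypotheses are met, and since $D$ is assumed infinite-dimensional over its center the theorem's dichotomy forces $g$ to be an elementary operator.

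Next I would normalize this elementary operator. Writing $g(y) = \sum_i a_i y b_i$, the usual rank decomposition of the tensor $\sum_i a_i \otimes b_i$ in $D \otimes_{Z(D)} D$ — carried out by collecting terms and exploiting that the scalars lie in the center — rewrites $g$ as $g(y) = \sum_{j=1}^{s} p_j y q_j$ with both $\{p_1,\dots,p_s\}$ and $\{q_1,\dots,q_s\}$ linearly independent over $Z(D)$, and with $s \geq 1$ because $g \neq 0$. That this rewriting leaves the operator $g$ unchanged is exactly the injectivity content of Fact~\ref{ll3}. This normalization is the crux of the argument: it is precisely the hypothesis that lets Lemma~\ref{tll4} be invoked.

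With $g$ in this form, substitute it into the given identity. Since $g(y^2) = \sum_j p_j y^2 q_j$, the identity says that for every $y \in D$,
\begin{equation*}
	\sum_{j=1}^{s} p_j y^2 q_j - \sum_{j=1}^{s}\bigl(w_1(y) p_j y q_j + p_j y q_j w_2(y)\bigr) - \sum_{j=1}^{s} w_3(y) p_j y q_j w_4(y) = 0;
\end{equation*}
that is, $D$ satisfies the generalized polynomial $f(Y) \in D\{Y\}$ obtained by replacing $y$ with the indeterminate $Y$ in the left-hand side. The degree hypotheses $\operatorname{deg} w_i > 1$ and $\operatorname{deg} w_1 \neq \operatorname{deg} w_2 \neq \operatorname{deg} w_3 + \operatorname{deg} w_4$, together with the linear independence of $\{p_j\}$ and $\{q_j\}$ secured above, are exactly the hypotheses of Lemma~\ref{tll4}, which therefore gives $f(Y) \neq 0$ in $D\{Y\}$. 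Hence $f$ is a nontrivial GPI, $D$ is a GPI-algebra, and Theorem~\ref{m1} yields $[D:Z(D)] < \infty$, contradicting the standing assumption; so $D$ must be finite-dimensional over $Z(D)$.

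I expect the only real obstacle to be the normalization step. Everything after it is a direct substitution followed by a citation of Lemma~\ref{tll4} and Theorem~\ref{m1}, but Lemma~\ref{tll4} genuinely requires both coefficient families to be $Z(D)$-independent (the counterexamples recorded after the lemma show the conclusion can fail otherwise), so the argument stands or falls on being able to present the elementary operator $g$ with independent left and right coefficients.
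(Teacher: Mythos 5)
Your proposal is correct and follows essentially the same route as the paper: invoke Theorem~\ref{ttl2} to reduce to the case where $g$ is an elementary operator, normalize to a representation $g(y)=\sum_{j=1}^{s}p_jyq_j$ with both coefficient families $Z(D)$-independent (the paper does this by taking $s$ minimal, which is the same as your rank normalization), substitute into the identity, and apply Lemma~\ref{tll4} together with Theorem~\ref{m1}. Your explicit flagging of the degenerate case $g=0$ is a point the paper leaves implicit, but otherwise the arguments coincide.
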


\begin{proof}
	From Theorem \ref{ttl2} it follows that either $D$ is a finite-dimensional over $Z(D)$ or $g$ is an elementary operator. If $g$ is an elementary operator, then there exist finitely many $p_{j}, q_{j} \in D$ such that $\sum_{j=1}^{s} p_{j} y q_{j}$ for all $y \in D$. We may assume $s$ to be minimal. Then $\{p_{1}, \cdots, p_{s} \}$ and $ \{ q_{1}, \cdots, q_{s} \} $ are two independent sets over $Z(D)$. Since $g(y^2) = w_{1}(x)g(y) + g(y)w_{2}(y) + w_{3}(y)g(y)w_{4}(y)$ for all $y \in D$, it follows that 
	\begin{equation*}
		\sum_{j=1}^{s} p_{j} Y^{2} q_{j} - \sum_{j=1}^{s}\Bigl( w_{1}(Y) p_{j} Y q_{j} + p_{j} Y q_{j} w_{2}(Y)\Bigl) - \sum_{j=1}^{s} w_{3}(Y) p_{j} Y q_{j} w_{4}(Y) = 0
	\end{equation*}
	for all $y \in D$. Recall that $\operatorname{deg} w_{i}(Y) > 1$ for $i\in \{1,2,3,4\}$. From Lemma \ref{tll4} it follows  that
	\begin{equation*}
	\sum_{j=1}^{s} p_{j} Y^{2} q_{j} - \sum_{j=1}^{s}\Bigl( w_{1}(Y) p_{j} Y q_{j} + p_{j} Y q_{j} w_{2}(Y)\Bigl) - \sum_{j=1}^{s} w_{3}(Y) p_{j} Y q_{j} w_{4}(Y)
	\end{equation*}
	is a non-trivial GPI for $D$. By Theorem \ref{m1}, we get $[D:Z(D)] < \infty$.
\end{proof}
We have the following remark.
\begin{rem} \label{rem lemm2}
	Suppose $g$ is an additive map on a division ring $D$ with $\operatorname{char}(D) \neq 2$ and $w_{1}(Y) =w_{2}(Y)= Y^{2\ell}, w_{3}(Y)= 4Y^{\ell}, w_{4}(Y) = Y^{\ell}$, where $\ell>1$ is an integer, such that $g(y^2) = w_{1}(y)g(y) + g(y)w_{2}(y) + w_{3}(y)g(y)w_{4}(y)$ for all $y \in D$. Then $D$ is finite-dimensional over its center.
\end{rem}

\section{The identity $g_{1}(y)y^{-m} + y^{n}g_{2}(y^{-1})=0$}

The aim of this section is to solve equation \eqref{maineq1} to get the complete solution.

We notice that the Theorem \ref{main2} may not hold in general. The following example justifies our fact.

\begin{example}
	Let $D = \mathbb{Z}_{5} [Y_{1}, Y_{2}]$. Define additive maps $g_{1},g_{2}$ on $D$ such that $g_{1}|_{\mathbb{Z}_{5}}(y) = 3y^{5}$  and $g_{2}|_{\mathbb{Z}_{5}}(y) = 2y$. For $n=4,m=2$ it is easy to see that $g_1$ and $g_2$ are satisfying the equation \eqref{maineq1} but neither $g_{1} = 0$ nor $g_{2} = 0$.
\end{example}

\begin{prop}\label{t12}
 Let $m$, $n$ be positive integers with $(m,n) \neq (1, 1)$ and $R$ be an algebra over a field $F$. Suppose $g_{1}$ and $g_{2}$ are additive maps on $R$ such that 
	\begin{equation} \label{eq9}
		g_{1}(y)y^{-m} + y^{n}g_{2}(y^{-1})=0
	\end{equation}
	for all $y \in R^*$. Then  $g_{1} = g_{2} = 0$ on $R^*$ except when $\operatorname{char}(F) = p >0$ with $p-1 | n+m-2$.  
\end{prop}
\begin{proof}
  If $\operatorname{char}(F) = 0$, then we let $k=2$. If $\operatorname{char}(F) = p > 0$, choose $k$ such that $\Bar{k}$ generates the cyclic multiplicative group $\mathbb{Z}_{p}^{*}$ provided $p-1 \not| n+m-2$. Then $k(k^{n+m-2}-1) \neq 0$ in $F$. Replacing $y$ by $ky$ in equation (\ref{eq9}), we obtain
	\begin{align*}
		kg_{1}(y) & = g_{1}(ky)\\
		& = - k^{n} y^{n}g_{2}(k^{-1}y^{-1})k^{m}y^{m}\\
		& = - k^{n+m-1}y^{n}g_{2}(y^{-1})y^{m}\\
		& = k^{m+n-1}g_{1}(y)
	\end{align*}
	for all $y \in R^*$. This implies that $k(k^{n+m-2}-1)g_{1}(y) = 0$ for all $y \in R^*$. Hence $g_{1} = 0$ on $R^*$, and so $g_{2} = 0 $ on $R^*$ by equation (\ref{eq9}).
\end{proof}

The following lemma is a particular case of Proposition \ref{t12}.

\begin{lem} \label{4t2}
 Suppose $g_{1},g_{2}$ are additive maps on a division ring $D$ and $m,n$ are positive integers with $(m,n) \neq (1,1)$ such that $g_{1}(y)y^{-m} + y^{n}g_{2}(y^{-1})=0$ for all $y \in D^{*}$. Then  $g_{1} = g_{2} = 0$ on $D^*$ except when $\operatorname{char}(D) = p >0$ with $p-1 | n+m-2$.
\end{lem}

In \cite[Example $4.3$]{tk1}, several examples illustrate that the restriction on the characteristic in Proposition \ref{t12} is necessary.

\begin{rem}
	If either $m=1$ and $n=2$ or $m=2$ and $n=1$, then $g_{1}=g_{2}=0$ by Lemma \ref{4t2}.
\end{rem}

In this section, we always assume the following.

\begin{equation} \tag{*} \label{1*}
	\operatorname{char} D = p > 2, p-1 | n+m-2, n,m \geq 2.
\end{equation}

Therefore, to study Theorem \ref{main2}, we divide our arguments into the following cases.
\begin{case}
$n= p^{\ell_{1}} k_{1}, m=p^{\ell_{2}}k_{2}$, where $\ell_{i} \geq 0$, $k_{i} >1$ and $gcd(p,k_{i}) = 1$, and $k_{i}-1$ is not a non-negative power of $p$, $i \in \{1,2\}$.
\end{case}
\begin{case}
$n= p^{\ell_{1}} k_{1}, m=p^{\ell_{2} +m_{2}} + p^{\ell_{2}}$, where $\ell_{i},m_{2} \geq 0$, $k_{1} >1$ for $i \in \{1,2\}$, $gcd(p,k_{1}) = 1$, $(\ell_{2},m_{2}) \neq (0,0)$ and $k_{1}-1$ is not a non-negative power of $p$.
\end{case}
\begin{case}
$n= p^{\ell_{1} + m_{1}} + p^{\ell_{1}}, m=p^{\ell_{2}}k_{2}$, where $\ell_{i},m_{1} \geq 0$ for $i \in \{1,2\}$, $k_{2} >1$, $gcd(p,k_{2}) = 1$, $(\ell_{1},m_{1}) \neq (0,0)$ and $k_{2}-1$ is not a non-negative power of $p$.
\end{case}
\begin{case}
	$n = p^{\ell_{1}+m_{1}}+ p^{\ell_{1}}, m= p^{\ell_{2}+m_{2}} + p^{\ell_{2}}$, for integers $\ell_{i} \geq 0$ and $m_{i} \geq 0$ such that $(\ell_{i},m_{i}) \neq (0,0)$, for $i \in \{1,2\}$.
\end{case}

Consider $P_{1}(Y)= 2\Bigl( \sum^{k_{1}}_{i=0} \binom{k_{1}}{i} Y^{p^{\ell_{1}i}} \Bigl), P_{2}(Y) = \sum^{k_{2}}_{j=0} \binom{k_{2}}{j} Y^{p^{\ell_{2}j}}$.

\begin{lem} \label{l7}
 Suppose $k > 1$ is a positive integer, $p$ is an odd prime with $gcd(p,k)=1$ and $k-1$ is not a non-negative power of $p$. Let $s \geq 0 $ be the largest integer such that $p^{s} | k-1$. Then the following hold:
	
	\begin{itemize}
		\item[(i)] $p \not| \binom{k}{p^s+1}$,
        \item[(ii)] $p \not| \binom{k}{p^s}$, 
		\item[(iii)] Assume \eqref{1*} and Case $1$. Then
		$P_{1}(Y), P_{2}(Y) \in (\mathbb{Z}_p[Y]) \backslash \{0\}$.
	\end{itemize}
\end{lem}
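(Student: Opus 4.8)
The plan is to reduce both (i) and (ii) to the base-$p$ digit pattern of $k$ and then read off the conclusion with Lucas' theorem, and to deduce (iii) from (ii). First I would write $k-1 = p^{s}u$ with $p \nmid u$; since $k>1$ and $\gcd(p,k)=1$, this is exactly the factorisation singled out by the maximality of $s$, and the hypothesis that $k-1$ is not a power of $p$ forces $u \ge 2$ (though only $u_0 := u \bmod p \neq 0$ is actually used). I then record the base-$p$ expansion $k = \sum_{i\ge 0} k_i p^{i}$: because the lowest nonzero digit of $k-1 = p^{s}u$ sits in position $s$ and equals $u_0$, for $s \ge 1$ adding $1$ produces no carry, so $k_0 = 1$, $k_1 = \cdots = k_{s-1} = 0$, and $k_s = u_0 \in \{1,\dots,p-1\}$.

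For $s \ge 1$ I apply Lucas' theorem directly. The integer $p^{s}$ has the single nonzero base-$p$ digit $1$ in position $s$, so $\binom{k}{p^{s}} \equiv \binom{k_s}{1} = k_s = u_0 \not\equiv 0 \pmod p$, which is (ii); and $p^{s}+1$ has digit $1$ in positions $0$ and $s$ (distinct since $s\ge 1$), so $\binom{k}{p^{s}+1} \equiv \binom{k_0}{1}\binom{k_s}{1} = 1\cdot u_0 \not\equiv 0 \pmod p$, which is (i). Here $p^{s}\le k-1<k$ and $p^{s}+1\le k$, so these binomials are genuine. The remaining case $s = 0$ I treat by hand, since adding $1$ to $k-1$ may carry: then $p \nmid k-1$, and (ii) is $\binom{k}{1} = k \not\equiv 0$ by $\gcd(p,k)=1$, while (i) is $\binom{k}{2} = k(k-1)/2$, a unit mod $p$ because $p$ is odd and $p$ divides neither $k$ nor $k-1$.

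For (iii), Case $1$ supplies $k_1,k_2$ satisfying exactly the hypotheses of (i)--(ii), and $p$ is odd by \eqref{1*}. In $P_2(Y) = \sum_{j=0}^{k_2}\binom{k_2}{j}Y^{p^{l_2 j}}$ the exponents $p^{l_2 j}$ are pairwise distinct when $l_2 \ge 1$, so the coefficients are the residues $\binom{k_2}{j} \bmod p$; letting $s_2$ be the maximal power for $k_2$, part (ii) makes the coefficient $\binom{k_2}{p^{s_2}}$ of $Y^{p^{l_2 p^{s_2}}}$ nonzero (note $p^{s_2} \le k_2-1 < k_2$), whence $P_2 \neq 0$ in $\mathbb{Z}_p[Y]$. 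When $l_2 = 0$ all monomials collapse and $P_2 = 2^{k_2}Y$, again nonzero since $p \nmid 2^{k_2}$. The identical argument applies to $P_1$, the extra factor $2$ being a unit mod the odd prime $p$.

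The main obstacle is the bookkeeping in (i) and (ii): correctly pinning down the base-$p$ digits of $k$ in and beyond position $s$, and recognising that the $s=0$ case genuinely needs the separate, carry-free treatment above rather than the digit pattern. Once the digit structure is fixed, Lucas' theorem makes the non-divisibility immediate, and (iii) follows as a short consequence.
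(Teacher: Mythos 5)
Your proposal is correct, and for parts (i) and (ii) it is genuinely more self-contained than the paper's treatment: the paper simply cites Lee and Lin for (i) and omits (ii) as ``similar,'' whereas you derive both from the base-$p$ digit pattern of $k$ (digits $1,0,\dots,0,u_0$ in positions $0$ through $s$ when $s\ge 1$) together with Lucas' theorem, and you correctly isolate the carry-prone case $s=0$ for a direct computation; the needed inequalities $p^{s}+1\le k$ and $u_0\not\equiv 0$ are all justified. For (iii) the two arguments diverge more in emphasis than in substance. As literally stated the claim is very weak --- $P_1$ and $P_2$ are essentially $2(1+Y)^{n}$ and $(1+Y)^{m}$ reduced mod $p$ --- and your argument, exhibiting the nonzero coefficient $\binom{k_2}{p^{s_2}}$ via (ii) and checking the exponents are distinct, certainly proves it. The paper's proof of (iii) instead runs a parity case analysis on $k_1,k_2$, choosing the index $p^{s_i}+1$ or $p^{s_i}$ according to whether $k_i$ is even or odd and checking it lies in $\{1,\dots,k_i-2\}$; this bookkeeping is aimed not at the literal statement but at its use in Lemma \ref{l9}, where the identity actually obtained only retains pairs $(i,j)$ with $i\equiv j\pmod 2$ and excludes $(0,0)$ and $(k_1,k_2)$, so one must produce a \emph{surviving} pair with both binomial coefficients prime to $p$. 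Your witnesses happen to deliver this anyway, since $p^{s_1}$ and $p^{s_2}$ are both odd and both lie strictly between $0$ and $k_i$, so nothing essential is lost --- but if the lemma is to feed Lemma \ref{l9} you should record that observation explicitly rather than leave it implicit. A minor caution: you read the exponents literally as $p^{l_2 j}$, following a typo in the displayed definition of $P_1,P_2$; the exponent actually used in Lemma \ref{l9} is $p^{l_2}j$. Your distinct-exponent argument goes through under either reading, so this does not affect correctness.
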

	\begin{proof}
		\begin{itemize}
			\item[(i)] Conclusion follows from \cite[Lemma $5.3 (i)$]{tk1}.
                \item[(ii)] Applying similar argument as in \cite[Lemma $5.3 (i)$]{tk1}, we get our conclusion.
			\item[(iii)] Since $p$ is an odd prime and $p-1 | n+m-2$, it follows that $n+m$ is even. This implies that both $m$ and $n$ are either even or odd. Consequently, both $k_{1}$ and $k_{2}$ are either even or odd. Let $s_{1}, s_{2} \geq 0$ be the largest such that $p^{s_{1}} | k_{1}-1$ and $p^{s_{2}}|k_{2}-1$. Since $k_{1}-1$, $k_{2}-1$  are not non-negative power of $p$, we have $p^{s_{1}} < k_{1}-1$ and $p^{s_{2}} < k_{2}-1$. First, we assume that $k_{1}$ is even.  In this case $p^{s_{1}} +1$ is even and $p^{s_{1}}+1 \leq k_{1}-2$.
			If $k_{1}$ is odd, then $p^{s_{1}}$ is odd and $p^{s_{1}} \leq k_{2}-2$. By (i) and (ii), we obtain $p \not| \binom{k_{1}}{p^{s_{1}}+1}$ and $p \not| \binom{k_{1}}{p^{s_{1}}}$. Using similar arguments for $k_{2}$, we obtain $p \not| \binom{k_{2}}{p^{s_{2}}+1}$ and $p \not| \binom{k_{2}}{p^{s_{2}}}$. Thus, $P_{1}(Y)$ and $P_{2}(Y)$ are non-zero in $\mathbb{Z}_p[Y] \backslash \{0\}$.
		\end{itemize}
	\end{proof}

\begin{lem} \label{l8}
	 Suppose $g_{1}$ and $g_{2}$ are additive maps on a non-commutative division ring $D$ and $n,m \geq 2$ are integers such that $g_{1}(y)y^{-m} + y^ng_{2}(y^{-1})=0$, for all $y \in D^{*}$. Then 
	\begin{equation*}
		g_{1}(a_{1}) = -(1-a_{1})^ng_{1}(1)(1-a_{1})^m + a_{1}^ng_{1}(1)a_{1}^m +g_{1}(1) + g_{2}(a_{1})
	\end{equation*}
	for all $ a_{1} \in D$.
\end{lem}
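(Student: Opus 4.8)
The plan is to convert the hypothesis into a pair of pointwise inversion identities and then break their apparent circularity with a single commutation observation. First I would rewrite the hypothesis as $g_1(y)=-y^ng_2(y^{-1})y^m$ for all $y\in D^*$; substituting $y\mapsto y^{-1}$ gives the companion identity $g_2(y)=-y^ng_1(y^{-1})y^m$, and setting $y=1$ yields $g_1(1)=-g_2(1)$. Since $D$ is a division ring, $a_1$ and $1-a_1$ are invertible precisely when $a_1\neq 0$ and $a_1\neq 1$; the two boundary values are checked directly against the asserted formula (using $g_1(0)=0$ and $g_2(1)=-g_1(1)$, both sides agree), so I may assume $a_1\neq 0,1$ from now on.

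Next, applying $g_1(a_1)=-a_1^ng_2(a_1^{-1})a_1^m$ and splitting $g_2(a_1^{-1})=g_2(1)+g_2(a_1^{-1}-1)$ by additivity, together with $g_2(1)=-g_1(1)$, I obtain
$$g_1(a_1)=a_1^ng_1(1)a_1^m-a_1^ng_2(d)a_1^m,\qquad d:=a_1^{-1}-1=a_1^{-1}(1-a_1).$$
Thus the whole problem reduces to evaluating the single term $a_1^ng_2(d)a_1^m$ and showing it equals $g_2(1-a_1)+(1-a_1)^ng_1(1)(1-a_1)^m$.

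The crux is the observation that $d$ lies in the commutative subfield generated by $a_1$: since $d=a_1^{-1}-1$, we have $a_1d=da_1=1-a_1$, and therefore $a_1^nd^n=(a_1d)^n=(1-a_1)^n$ and $d^ma_1^m=(da_1)^m=(1-a_1)^m$. Applying the companion identity in the form $g_2(d)=-d^ng_1(d^{-1})d^m$ and inserting it into $a_1^ng_2(d)a_1^m$, these commutations collapse the outer powers to give $-(1-a_1)^ng_1(d^{-1})(1-a_1)^m$. Finally, writing $d^{-1}=(1-a_1)^{-1}-1$, using additivity to get $g_1(d^{-1})=g_1((1-a_1)^{-1})-g_1(1)$, and evaluating $g_1((1-a_1)^{-1})=-(1-a_1)^{-n}g_2(1-a_1)(1-a_1)^{-m}$ by the first identity, all the remaining inverse powers cancel and the term simplifies to $g_2(1-a_1)+(1-a_1)^ng_1(1)(1-a_1)^m$. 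Substituting $g_2(1-a_1)=-g_1(1)-g_2(a_1)$ back into the displayed reduction yields exactly the claimed identity.

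I expect the main obstacle to be precisely this commutation step: without recognizing that $d=a_1^{-1}(1-a_1)$ commutes with $a_1$, every attempt to relate $g_1(a_1)$ to $g_2(a_1)$ merely reshuffles the two inversion identities and additivity into an empty tautology. The remaining effort — tracking the inverse powers through $d$ and $d^{-1}$ and confirming the cancellations — is routine bookkeeping once that observation is in place, and I note that it uses neither the characteristic hypothesis nor the non-commutativity of $D$ (those enter only in the later reduction steps).
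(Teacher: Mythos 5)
Your proof is correct and is essentially the paper's argument in reverse order: the relation $d^{-1}=(1-a_1)^{-1}-1$ that you derive from the commutation $a_1d=da_1=1-a_1$ is exactly Hua's identity $1-a_1=[1+(a_1^{-1}-1)^{-1}]^{-1}$, which the paper invokes by name, and both proofs then consist of the same three applications of the functional identity together with additivity splits of $g_i(1)$. The only cosmetic difference is that the paper starts by applying $g_1$ to $1-a_1$ while you start from $g_1(a_1)=-a_1^ng_2(a_1^{-1})a_1^m$ and work outward.
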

\begin{proof}
	From Hua's identity, for any $ a_{1} \in D^{*}$ such that $a_1 \neq 0,1$, we have
	\begin{equation*}
		1-a_{1}= [ 1 + ( a_{1}^{-1} - 1)^{-1} ]^{-1}.
	\end{equation*}
	By the additivity of $g_{1}$, we obtain
	\begin{equation}
		\begin{aligned}
			g_{1}(a_{1}) & = g_{1}(1) - g_{1}(  [ 1 + ( a_{1}^{-1} - 1)^{-1} ]^{-1})\\
			& = g_{1}(1) + \Bigl(( 1 + ( a_{1}^{-1} - 1)^{-1} )^{-n}g_{2}(1 + ( a_{1}^{-1} - 1)^{-1})\Bigl)\\
			& \hspace{1.8cm}\Bigl(( 1 + ( a_{1}^{-1} - 1)^{-1} )^{-m}\Bigl)\\
			& = g_{1}(1) +(1-a_{1})^ng_{2}(1)(1-a_{1})^m + \Bigl( a_{1}^n(a_{1}^{-1} -1)^n g_{2}( ( a_{1}^{-1} -1)^{-1})\Bigl) \\
			& \hspace{1.8cm}\Bigl((a_{1}^{-1} -1)^m a_{1}^m \Bigl)\\
			& = g_{1}(1) - (1-a_{1})^ng_{1}(1)(1-a_{1})^m - a_{1}^n g_{1}( a_{1}^{-1} -1) a_{1}^m \\
			& = g_{1}(1) -  (1-a_{1})^ng_{1}(1)(1-a_{1})^m + a_{1}^n g_{1}(1) a_{1}^m +g_{2}(a_{1}). 
		\end{aligned}
	\end{equation}
\end{proof}

\begin{lem} \label{l9}
	 Suppose $g_{1}$ and $g_{2}$ are additive maps on a non-commutative division ring $D$ satisfying (\ref{1*}) and Case $1$. If $g_{1}(y)y^{-m} + y^ng_{2}(y^{-1})=0$ for all $y \in D^{*}$, then $g_{1}(1) = 0$ and $g_{1} = g_{2}$.
\end{lem}
\begin{proof}
	Since $p$ is an odd prime and $p-1 | n+m -2$ it follows that $n+m$ is even. Hence, both $n$ and $m$ are either even or odd it implies that both $k_{1}$ and $k_{2}$ are either even or odd respectively . Moreover, since $k_{i}-1 \neq p^0=1$, it follows that $k_{i} > 2$, for $i \in \{1,2\}$. By Lemma \ref{l8}, we have 
	\begin{equation} \label{eq1}
		(g_{1} - g_{2})(a_{1})=  g_{1}(1) -  (1-a_{1})^ng_{1}(1)(1-a_{1})^m + a_{1}^n g_{1}(1) a_{1}^m
	\end{equation}
	for all $a_{1} \in D$. Replacing $a_{1}$ by $-a_{1}$ in equation (\ref{eq1}), we get
	\begin{equation} \label{eq2}
		-(g_{1} - g_{2})(a_{1})=  g_{1}(1) -  (1+a_{1})^ng_{1}(1)(1+a_{1})^m + a_{1}^n g_{1}(1) a_{1}^m
	\end{equation}
	for all $a_{1} \in D$. From equation (\ref{eq1}) and equation (\ref{eq2}), we obtain
	\begin{equation} \label{eq3}
		0= 2g_{1}(1) - (1-a_{1})^ng_{1}(1)(1-a_{1})^m - (1+a_{1})^ng_{1}(1)(1+a_{1})^m + 2a_{1}^ng_{1}(1)a_{1}^m
	\end{equation}
	for all $a_{1} \in D$. Since $k_{i} >2$ for $i \in \{1,2\}$, equation (\ref{eq3}) becomes
	\begin{equation*}
		2\Bigl( \sum^{k_{1}}_{i=0} \binom{k_{1}}{i} a_{1}^{p^{\ell_{1}}i} \Bigl)g_{1}(1)\Bigl( \sum^{k_{2}}_{j=0} \binom{k_{2}}{j} a_{1}^{p^{\ell_{2}}j}\Bigl)=0
	\end{equation*}
	for all $a_{1} \in D$, where $(i,j) \neq (0,0), (k_{1},k_{2})$  and both $i$ and $j$ can not be even and odd or odd and even respectively.
	
	Let $ P_{1}(Y)= 2\Bigl( \sum^{k_{1}}_{i=0} \binom{k_{1}}{i} Y^{p^{\ell_{1}}i} \Bigl), P_{2}(Y) = \Bigl( \sum^{k_{2}}_{j=0} \binom{k_{2}}{j} Y^{p^{\ell_{2}}j}\Bigl) \in (\mathbb{Z}_p[Y]) $. Then the identity becomes
	\begin{equation*}
		P_{1}(a_{1})g_{1}(1)P_{2}(a_{1})=0.
	\end{equation*}
	By Lemma \ref{l7}, we have $P_{1}(Y)g_{1}(1)P_{2}(Y) \neq 0 \in D_{G}[Y]$. By Jacobson's theorem \cite[Theorem $13.11$]{tl1}, there exists $ a_{1} \in D $ such that $P_{1}(a_{1})g_{1}(1)P_{2}(a_{1}) \neq 0$. Thus we get $g_{1}(1) = 0$ and hence $g_{1} = g_{2}$.
\end{proof}

\begin{lem} \label{lc9}
	 Suppose $g_{1}$ and $g_{2}$ are additive maps on a non-commutative division ring $D$ satisfying (\ref{1*}) and Case $2$. If $g_{1}(y)y^{-m} + y^ng_{2}(y^{-1})=0$ for all $y \in D^{*}$, then $g_{1}(1) = 0$ and $g_{1} = g_{2}$.
\end{lem}

\begin{proof}
Since $p$ is an odd prime and $p-1 | n+m -2$, it follows that $n+m$ is even. This implies that both $n$ and $m$ are either even or odd prime and so $k_{1}$ is even. Also, it follows from $k_{1}-1 \neq p^0=1$ that $k_{1} > 2$. By Lemma \ref{l8}, we have
	\begin{equation} \label{eqc1}
		(g_{1}-g_{2})(a_{1})=  g_{1}(1) -  (1-a_{1})^ng_{1}(1)(1-a_{1})^m + a_{1}^n g_{1}(1) a_{1}^m
	\end{equation}
	for all $a_{1} \in D$. Replacing $a_{1}$ by $-a_{1}$ in equation (\ref{eqc1}), we obtain
	\begin{equation} \label{ceq2}
		-(g_{1}-g_{2})(a_{1})=  g_{1}(1) -  (1+a_{1})^ng_{1}(1)(1+a_{1})^m + a_{1}^n g_{1}(1) a_{1}^m
	\end{equation}
	for all $a_{1} \in D$. From equation (\ref{eqc1}) and equation (\ref{ceq2}), we obtain
	\begin{equation} \label{eqc3}
		0= 2g_{1}(1) - (1-a_{1})^ng_{1}(1)(1-a_{1})^m - (1+a_{1})^ng_{1}(1)(1+a_{1})^m + 2a^ng_{1}(1)a_{1}^m
	\end{equation}
	for all $a_{1} \in D$. Since $k_{1} >2$ and $m=p^{\ell_{2}+m_{2}} + p^{\ell_{2}}$, equation (\ref{eqc3}) reduces to
	\begin{equation*}
		\begin{aligned}
				2\Bigl( & \sum^{k_{1}-2}_{i=2, even} \binom{k_{1}}{i} a_{1}^{p^{\ell_{1}}i} \Bigl)g_{1}(1)(1+a_{1}^{p^{\ell_{2}+m_{2}} + p^{\ell_{2}}}) + 2\Bigl( \sum^{k_{1}-1}_{i=1, odd} \binom{k_{1}}{i} a_{1}^{p^{\ell_{1}}i} \Bigl)g_{1}(1)\\
				&(a_{1}^{p^{\ell_{2}+m_{2}}} + a_{1}^{p^{\ell_{2}}}) + 2a_{1}^{p^{\ell_{1}}k_{1}}g_{1}(1) + 2g_{1}(1)a_{1}^{p^{\ell_{2} + m_{2}} + p^{\ell_{2}}} =0
		\end{aligned}	
	\end{equation*}
 for all $a_{1} \in D$.
	Let $Q_{1}(Y) = 2\Bigl( \sum^{k_{1}-2}_{i=2, even} \binom{k_{1}}{i} Y^{p^{\ell_{1}}i} \Bigl)g_{1}(1)(1+Y^{p^{\ell_{2}+m_{2}} + p^{\ell_{2}}}) + 2\Bigl( \sum^{k_{1}-1}_{i=1, odd} \binom{k_{1}}{i} Y^{p^{\ell_{1}}i} \Bigl)g_{1}(1)(a_{1}^{p^{\ell_{2}+m_{2}}} + Y^{p^{\ell_{2}}}) + 2Y^{p^{\ell_{1}}k_{1}}g_{1}(1) + 2g_{1}(1)Y{p^{\ell_{2} + m_{2}} + p^{\ell_{2}}}.$ 
	
	If $Q_{1}(Y) = 0 \in D_{G}[Y]$, then we get $2Y^{p^{\ell_{1}}k_{1}}g_{1}(1) = 0$, which implies $g_{1}(1) = 0$.
	
Let $Q_{1}(Y) \neq 0 \in D_{G}[Y]$.	By Lemma \ref{l7}, we get $P_{1}(Y)g_{1}(1)(1+Y^{p^{\ell_{2}+m_{2}} + p^{\ell_{2}}}) \neq 0 \in D_{G}[Y]$. According to Jacobson's theorem \cite[Theorem $13.11$]{tl1}, there exists $ a_{1} \in D $ such that $Q_{1}(a_{1})\neq 0$. Therefore, we get $g_{1}(1) = 0$ and thus $g_{1} = g_{2}$.
\end{proof}
Using similar techniques with some necessary variations we can prove the following Lemma. 
\begin{lem} \label{clc9}
	 Suppose $g_{1}$ and $g_{2}$ are additive maps on a non-commutative division ring $D$ satisfying (\ref{1*}) and Case $3$. If $g_{1}(y)y^{-m} + y^ng_{2}(y^{-1})=0$ for all $y \in D^{*}$, then $g_{1}(1) = 0$ and $g_{1} = g_{2}$.
\end{lem}

\begin{lem} \label{ll6}
	 Suppose $g_{1}$ and $g_{2}$ are additive maps on a non-commutative division ring $D$ satisfying (\ref{1*}) and Case $4$. If $g_{1}(y)y^{-m} + y^ng_{2}(y^{-1})=0$ for all $y \in D^{*}$, then $g_{1}(1) = 0$ and $g_{1} = g_{2}$.
\end{lem}

\begin{proof}
	From Lemma \ref{l8}, it follows that
	\begin{equation} \label{leq1}
		(g_{1} - g_{2})(a_{1})=  g_{1}(1) -  (1-a_{1})^ng_{1}(1)(1-a_{1})^m + a_{1}^n g_{1}(1) a_{1}^m
	\end{equation}
	for all $a_{1} \in D$. Replacing $a_{1}$ by $-a_{1}$ in equation(\ref{leq1}), we obtain
	\begin{equation} \label{leq2}
		-(g_{1} - g_{2})(a_{1})=  g_{1}(1) -  (1+a_{1})^ng_{1}(1)(1+a_{1})^m + a_{1}^n g_{1}(1) a_{1}^m
	\end{equation}
	for all $a_{1} \in D$.  Since $n= p^{\ell_{1}+m_{1}} + p^{\ell_{1}}$ and $m= p^{\ell_{2}+m_{2}} + p^{\ell_{2}}$, from equation (\ref{leq1}) and equation (\ref{leq2}), we have
	\begin{equation} \label{leq3}
		(g_{1} - g_{2})(a_{1}) = (1+a_{1}^{n})g_{1}(1)(a_{1}^{p^{\ell_{2}}} + a_{1}^{p^{\ell_{2} +m_{2}}}) + (a_{1}^{p^{\ell_{1}}} + a_{1}^{p^{\ell_{1} +m_{1}} })g_1(1)(1+a_{1}^{m})
	\end{equation}
	for all $a_{1} \in D$. If $g_{1}(1) \neq 0$, then the map $a_{1} \mapsto (1+a_{1}^{n})g_{1}(1)(a_{1}^{p^{\ell_{2}}} + a_{1}^{p^{\ell_{2} +m_{2}}}) + (a_{1}^{p^{\ell_{1}}} + a_{1}^{p^{\ell_{1} +m_{1}} })g_1(1)(1+a_{1}^{m})$ is additive. We define
	\begin{equation*}
		\begin{aligned}
		Q(X,Y) = &-(1-(X+Y))^{n}g_{1}(1)(1 - (X+Y))^{m} + (X+Y)^{n}g_{1}(X+Y)^{m} - g_{1}(1)\\
		& + (1-X)^{n}g_{1}(1)(1-X)^{m} - X^{n}g_{1}(1)X^{m} + (1 - Y)^{n}g_{1}(1)(1- Y)^{m}\\
		& - Y^{n}g_{1}(1)Y^{m} \in D_{G}[X,Y].
	\end{aligned}
	\end{equation*}
	Since $Q(X,Y) \neq 0$, it gives that $D$ is a PI-ring. By Po\v{s}ner's Theorem \cite{pose}, it follows that $D$ is finite-dimensional over its center. Since $D$ is non-commutative it implies that $Z(D)$ is an infinite field. Let $F$ be a maximal subfield of $D$ such that $Z(D) \subset F$. Then by standard result in ring theory, we have $D \otimes_{Z(D)} F \cong M_{r}(F)$, where $r=\sqrt{dim_{Z(D)}D} > 1$.
	Since $D$ and $M_r(F)$ satisfy the same PIs(see \cite[Corollary, p.$64$]{jacob}), we may assume that $Q(X,Y)$ is also a PI for $M_r(F)$. By substituting $X= I, Y= -I$ in $Q(X,Y)$, where $I$ is the $ r \times r$ identity matrix, we get $Q(I,-I) \neq 0$, which leads to a contradiction. Hence $g_{1}(1) = 0$ and so $g_{1} = g_{2}$.
\end{proof}
By Lemmas \ref{4t2}, \ref{l9}, \ref{lc9}, \ref{clc9} and \ref{ll6}, we have the following.
\begin{lem} \label{ll56}
	Suppose $g_{1}$ and $g_{2}$ are additive maps on a non-commutative division ring $D$ with $\operatorname{char}(D) \neq 2$. If $m\geq 2, n \geq 2$ are positive integers such that $g_{1}(y)y^{-m} + y^ng_{2}(y^{-1})=0$ for all $y \in D^*$, then $g_{1} = g_{2}$.
\end{lem}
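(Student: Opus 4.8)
The plan is to derive Lemma~\ref{ll56} by assembling the five preceding lemmas, organizing the argument as a dichotomy on the characteristic of $D$ followed by an arithmetic case split on the pair $(n,m)$. First I would dispose of the generic characteristic situation. Since $m,n \geq 2$ forces $(m,n)\neq(1,1)$, whenever $D$ does \emph{not} fall under char $D = p>0$ with $p-1 \mid n+m-2$, the hypotheses of Lemma~\ref{4t2} are met, and that lemma already yields $g_1=g_2=0$; in particular $g_1=g_2$. This is the easy half and requires no further computation.

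The substance is therefore the remaining regime, namely condition (\ref{1*}): char $D = p>2$, $\;p-1\mid n+m-2$, and $n,m\geq 2$. Here the governing observation is purely arithmetic. Writing each exponent $N\in\{n,m\}$ as $N=p^{l}k$ with $\gcd(p,k)=1$, one sorts $N$ into one of two shapes: either $k>1$ with $k-1$ not a power of $p$ (the \emph{composite} type, where the Frobenius-reduced expansion $(1\pm a^{p^{l}})^{k}$ retains enough interior binomial coefficients, as guaranteed by Lemma~\ref{l7}), or $k=p^{m'}+1$, i.e.\ $N=p^{l+m'}+p^{l}$ (the \emph{sparse} type, where Frobenius collapses the expansion to four monomials). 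The four ways of combining these shapes across $n$ and $m$ are exactly Cases~$1$--$4$, and I would invoke Lemma~\ref{l9}, Lemma~\ref{lc9}, Lemma~\ref{clc9}, and Lemma~\ref{ll6} respectively; each returns $g_1(1)=0$ and $g_1=g_2$. As these cases exhaust regime (\ref{1*}), the conclusion $g_1=g_2$ follows in every instance.

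The main obstacle is the bookkeeping behind this classification together with the non-vanishing of the governing polynomials modulo $p$. Concretely, one must check that the decomposition $N=p^{l}k$ really sorts every admissible exponent into one of Cases~$1$--$4$, paying attention to boundary exponents (small values, and pure prime powers where the factor $k$ degenerates to $1$), and that in each case the relevant element of $D[Y]$ --- the product $P_1(Y)g_1(1)P_2(Y)$ in Case~$1$, the polynomial $Q_1(Y)$ in Case~$2$, and their analogues in Cases~$3$ and~$4$ --- is genuinely nonzero. Only then does Jacobson's theorem produce a witness $a_1\in D$ at which the identity forces $g_1(1)=0$. Once each of Lemmas~\ref{l7}--\ref{ll6} is in hand, this assembly is routine: the real content lives in those lemmas, while the proof of Lemma~\ref{ll56} itself is the short case-combination that unifies them.
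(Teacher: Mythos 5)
Your proposal is exactly the paper's argument: the paper proves Lemma \ref{ll56} by simply citing Lemma \ref{4t2} for the generic-characteristic half and Lemmas \ref{l9}, \ref{lc9}, \ref{clc9} and \ref{ll6} for Cases $1$--$4$ under (\ref{1*}), which is precisely the assembly you describe. The exhaustiveness concern you flag is real --- an exponent $N=p^{l}$ with $k=1$ (and likewise $N=2$, excluded by the condition $(l_{i},m_{i})\neq(0,0)$) fits neither the ``composite'' nor the ``sparse'' shape as the four cases are stated --- but that gap sits in the paper's own case division rather than in anything your write-up adds.
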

From now on, we always make the following assumptions.

Let $D$ be a non-commutative division ring satisfying (\ref{1*}) and $g$ is an additive map on $D$ such that $g(y)y^{-m} + y^{n}g(y^{-1}) = 0$ for all $y \in D^{*}$.

The aim is to prove $g = 0$. Define $C_{D}(a_{2}) = \{ a_{1} \in D | a_{1}a_{2} = a_{2}a_{1} \}$, the centralizer of $a_{2} \in D$.

\begin{lem} \label{l10}
	Suppose $P(Y;a_{1}) = (1+Y)^ng(a_{1})(1+Y)^m + (1-Y)^ng(a_{1})(1-Y)^m - 2Y^ng(a_{1})Y^m -2g(a_{1}) \in D_{G}[Y]$, where $a_{1} \in D$. Then the following hold:
	\begin{itemize}
		\item[(i)] $g(y^2) = (1+y)^{n}g(y)(1+y)^{m} - y^{n}g(y)y^{m} - g(y)$ for all $y \in D$;
		\item[(ii)] $P(a_{1}a_{2};a_{1}) = 0$ for all $a_{1}, a_{2} \in D$ such that $a_{1}a_{2} = a_{2}a_{1}$;
		\item[(iii)] If $d \in D$ and $a_{1} \in C_D(d)^*$, $P(d;a_{1}) \neq 0$, then $g(C_D(d)) = 0$;
		\item[(iv)] If $ P(Z(D);a_{1})\neq 0$, then $g =0$.
	\end{itemize}	
\end{lem}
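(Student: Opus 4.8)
The plan is to handle the four items in order. Throughout I use that the case lemmas collected in Lemma~\ref{ll56} let us take $g_1=g_2=g$, and that evaluating the functional equation at $y=1$ gives $g(1)=0$; thus the standing hypothesis reads $g(y)=-y^{n}g(y^{-1})y^{m}$ for all $y\in D^{*}$ (call it $(\star)$), $g(1+y)=g(y)$ for all $y$, and by \eqref{1*} the integer $n+m$ is even. For (i) I would work with $y$ such that $y$ and $1+y$ are invertible, extending to $y\in\{0,-1\}$ by inspection. Applying $(\star)$ to $(1+y)^{-1}$ and using $g(1+y)=g(y)$ gives $g((1+y)^{-1})=-(1+y)^{-n}g(y)(1+y)^{-m}$; the resolvent identity $(1+y)^{-1}=1-y+y^{2}(1+y)^{-1}$ with additivity and $g(1)=0$ gives $g((1+y)^{-1})=-g(y)+g(y^{2}(1+y)^{-1})$, so the two expressions isolate $g(y^{2}(1+y)^{-1})$. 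Writing $y^{2}(1+y)^{-1}=(y^{-2}+y^{-1})^{-1}$, applying $(\star)$ to this element, expanding $g(y^{-2}+y^{-1})=g(y^{-2})+g(y^{-1})$, and rewriting these by $(\star)$ produces a second expression for $g(y^{2}(1+y)^{-1})$ in terms of $g(y)$ and $g(y^{2})$. Since every power of $y$ commutes with $(1\pm y)^{\pm1}$, the two match after cancellation, and multiplying by $(1+y)^{n}$ on the left and $(1+y)^{m}$ on the right yields (i).

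For (ii) I would first symmetrise (i): replacing $y$ by $-y$, using $g(-y)=-g(y)$ and the parity of $n+m$, gives $g(y^{2})=-(1-y)^{n}g(y)(1-y)^{m}+y^{n}g(y)y^{m}+g(y)$, and adding this to (i) is exactly $P(y;y)=0$, i.e. (ii) in the diagonal case $a_2=1$. To reach $P(a_1a_2;a_1)=0$ for commuting $a_1,a_2$ I would rerun the computation underlying (i) inside the commutative subring generated by $a_1$ and $a_2$ (equivalently inside $C_D(a_1)$), anchoring the core at $g(a_1)$ while the commuting factor supplies the polynomial argument; the Hua-type bookkeeping is as in Lemma~\ref{l8}, but now the $a_1$-powers and the $(1\pm a_1a_2)$-powers must be separated correctly. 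Since $a_1\in C_D(a_1)^{*}$, as $a_2$ ranges over $C_D(a_1)$ the product $a_1a_2$ ranges over all of $C_D(a_1)$, so (ii) amounts to $P(c;a_1)=0$ for every $c\in C_D(a_1)$.

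Parts (iii) and (iv) then read off the polynomial $P(Y;a_1)$. Because $Z(D)\subseteq C_D(a_1)$, item (ii) forces $P(z;a_1)=0$ for every central $z$; and for central $Y$ the sandwiches collapse to $P(Y;a_1)=q(Y)g(a_1)$ with $q(Y)=(1+Y)^{n+m}+(1-Y)^{n+m}-2Y^{n+m}-2=2\sum_{k\ \text{even},\,2\le k\le n+m-2}\binom{n+m}{k}Y^{k}$. As $D$ is non-commutative, $Z(D)$ is an infinite field, so once $P(\,\cdot\,;a_1)$ is a nonzero polynomial there---which a Lemma~\ref{l7}-type binomial argument guarantees in the generic cases---a nonzero polynomial cannot vanish on all of $Z(D)$, forcing $g(a_1)=0$ for every $a_1$ and hence $g=0$; this is (iv). For (iii) one localises the same argument to $C_D(d)$: viewing $P(Y;a_1)$ as an element of $D\{Y\}$ that, by (ii), vanishes on the division ring $C_D(a_1)$, a nonzero such generalized polynomial would make $C_D(a_1)$ a GPI-algebra and hence, by Theorem~\ref{m1}, finite-dimensional, a contradiction; thus $g(a_1)=0$ and $g(C_D(d))=0$.

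I expect (ii) to be the main obstacle. Its diagonal case is a formal consequence of (i), but the single-variable derivation of (i) ties the argument of $g$ to the element occurring in the powers, so decoupling the inner core $a_1$ from the polynomial variable while preserving commutativity is the delicate step; the careful separation of the non-commuting sandwiches in that two-variable computation is where the real work lies, after which (iii) and (iv) are routine polynomial-vanishing arguments over the infinite field $Z(D)$ (or GPI arguments via Theorem~\ref{m1}).
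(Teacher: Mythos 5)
Your derivation of (i) is correct and takes a genuinely different, single-variable route (via $(1+y)^{-1}=1-y+y^{2}(1+y)^{-1}$ and three applications of $g(u)=-u^{n}g(u^{-1})u^{m}$); the paper instead obtains (i) as the specialization $a_2=1$ of a two-variable computation. But that two-variable computation is exactly what you are missing for (ii), which is the heart of the lemma and which you explicitly leave undone. The missing tool is Hua's identity in the form $a_1-a_1a_2a_1=\bigl[a_1^{-1}+(a_2^{-1}-a_1)^{-1}\bigr]^{-1}$ (not the one-variable version used in Lemma \ref{l8}). Applying $g$ to it, rewriting each inverse by the standing relation, and then imposing $a_1a_2=a_2a_1$ yields
\begin{equation*}
g(a_1^{2}a_2)=-(1-a_1a_2)^{n}g(a_1)(1-a_1a_2)^{m}+(a_1a_2)^{n}g(a_1)(a_1a_2)^{m}+a_1^{n}g(a_2)a_1^{m}+g(a_1);
\end{equation*}
replacing $a_1$ by $-a_1$ (using that $n+m$ is even) and equating the two expressions for $g(a_1^{2}a_2)$ cancels the $a_1^{n}g(a_2)a_1^{m}$ term and gives exactly $P(a_1a_2;a_1)=0$. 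This is precisely the ``decoupling'' you flag as the delicate step; without producing this identity the proposal does not prove (ii). (Also, the diagonal case comes from subtracting, not adding, the $\pm y$ versions of (i), though that is cosmetic.)

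Your argument for (iii) is flawed. You treat $P(Y;a_1)$ as a GPI for $C_D(a_1)$ and invoke Theorem \ref{m1} to reach a ``contradiction'' with finite-dimensionality, but nothing in the hypotheses excludes $C_D(a_1)$ (or $D$ itself) from being finite-dimensional over its center, so no contradiction is available; moreover the coefficients of $P(Y;a_1)$ lie in $D$, not in $C_D(a_1)$. The GPI machinery is also unnecessary: for $a_1\in C_D(d)^{*}$ put $a_2=a_1^{-1}d$; then $a_1a_2=a_2a_1$ and (ii) gives $P(d;a_1)=P(a_1a_2;a_1)=0$, which contradicts the hypothesis unless $g(a_1)=0$ (recall that $P(Y;a_1)$ vanishes identically when $g(a_1)=0$). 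Hence $g(C_D(d))=0$, and (iv) is just the case $d\in Z(D)$, where $C_D(d)=D$; your computation of $q(Y)$ and the binomial nonvanishing argument belong to the later lemmas that verify the hypothesis of (iv), not to this lemma.
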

\begin{proof}
	Let $a_{1}, a_{2} \in D$ such that $a_{1} a_{2} \neq 0,1$. Then, by Hua's identity,
	$a_{1}- a_{1}a_{2}a_{1} = [a_{1}^{-1} + (a_{2}^{-1} - a_{1})^{-1}]^{-1}$.
	Since $g$ is an additive map, it follows that
	\begin{equation} \label{eq4}
		\begin{aligned}
			g(a_{1}-a_{1}a_{2}a_{1})
			=&  g( [a_{1}^{-1} + (a_{2}^{-1} - a_{1})^{-1}]^{-1}) \\
			= & -\Bigl((a_{1}^{-1} + (a_{2}^{-1} - a_{1})^{-1})^{-n}g(a_{1}^{-1} + (a_{2}^{-1} - a_{1})^{-1})\Bigl) \\
			&\hspace{0.5cm}\Bigl((a_{1}^{-1} + (a_{2}^{-1} - a_{1})^{-1})^{-m})\Bigl)\\
			= & -(a_{1} - a_{1}a_{2}a_{1})^{n}\Bigl(-a_{1}^{n}g(a_{1})a_{1}^{m}\\
		 &- (a_{2}^{-1} - a_{1})^{-n}g( a_{2}^{-1} -a_{1} )( a_{2}^{-1} - a_{1} )^{-m} \Bigl)\\ & \hspace{0.4cm} \Bigl(( a_{1} - a_{1}a_{2}a_{1} )^{m}\Bigl)\\
			= &(a_{1} - a_{1}a_{2}a_{1})^{n}a_{1}^{-n}g(a_{1})a_{1}^{-m}(a_{1} - a_{1}a_{2}a_{1})^{m} \\
			 &+ 
			\Bigl((a_{1} - a_{1}a_{2}a_{1})^{n}(a_{2}^{-1} - a_{1})^{-n}g(a_{2}^{-1} - a_{1})\Bigl)\\ & \hspace{0.5cm}\Bigl((a_{2}^{-1} - a_{1})^{-m}(a_{1} - a_{1}a_{2}a_{1})^{m}\Bigl)
		\end{aligned}
	\end{equation}
	for all $a_{1},a_{2} \in D$ with $a_{1}a_{2} \neq 0,1$. If $a_{1}a_{2} = a_{2}a_{1}$, then equation (\ref{eq4}) becomes
	\begin{equation*}
		\begin{aligned}
			g(a_{1}-a_{1}^{2}a_{2}) &= (1 - a_{1}a_{2})^ng(a_{1})(1 -a_{1}a_{2})^{m} - (a_{1}a_{2})^ng(a_{1})(a_{1}a_{2})^{m}\\
			& - (a_{1}a_{2})^na_{2}^{-n}g(a_{2})a_{2}^{-m}(a_{1}a_{2})^{m}.
		\end{aligned}	
	\end{equation*}
	This yields that
	\begin{equation}
		\begin{aligned}
          g( a_{1} - a_{1}^2a_{2}) = & (1 - a_{1}a_{2})^ng(a_{1})(1 -a_{1}a_{2})^{m} - (a_{1}a_{2})^ng(a_{1})(a_{1}a_{2})^{m}\\
          & - a_{1}^ng(a_{2})a_{1}^{m},	
		\end{aligned}
	\end{equation}
	which implies that
	\begin{equation} \label{eq5}
		\begin{aligned}
		g(a_{1}^2a_{2}) = & -(1-a_{1}a_{2})^ng(a_{1})(1-a_{1}a_{2})^{m} + (a_{1}a_{2})^ng(a_{1})(a_{1}a_{2})^{m}\\
		& + a_{1}^ng(a_{2})a_{1}^{m} + g(a_{1})
		\end{aligned}
	\end{equation}
	for all $ a_{1}, a_{2} \in D$ with $a_{1}a_{2} = a_{2}a_{1}$. Since $p-1 | n+m-2$, it follows that $ n+m $ is even. Replacing $a_{1} $ by $-a_{1}$ in equation (\ref{eq5}), we get
	\begin{equation} \label{eq6}
		\begin{aligned}
				g(a_{1}^2a_{2}) = & (1+a_{1}a_{2})^ng(a_{1})(1+a_{1}a_{2})^{m} - (a_{1}a_{2})^ng(a_{1})(a_{1}a_{2})^{m}\\
				& + a_{1}^ng(a_{2})a_{1}^{m} - g(a_{1})
		\end{aligned}	
	\end{equation}
	for all $a_{1}, a_{2} \in D$ with $a_{1}a_{2} = a_{2}a_{1}$. Since $g(1) = 0$, substituting  $a_{2} = 1$ in equation (\ref{eq6}), we get
	\begin{equation} \label{eq7}
		g(a_{1}^2) = (1+a_{1})^ng(a_{1})(1+a_{1})^{m} - a_{1}^ng(a_{1})a_{1}^{m} - g(a_{1}).
	\end{equation}
	This proves (i). Comparing equation (\ref{eq5}) and equation (\ref{eq6}), we get (ii).
	
	Assume $ d \in D$ such that $P(d;a_{1}) \neq 0$ and $a_{1} \in C_D(d)$. Take $a_{2} = a_{1}^{-1}d$. Then $a_{1}a_{2} = a_{2}a_{1}$. By (ii), we have $P(d;a_{1}) = P(a_{1}a_{2};a_{1}) = 0$, which is a contradiction. Thus, we get $g(a_{1})=0$. That is $g(C_D(d)) = 0$. Thus, the conclusion (iv) follows from (iii).
\end{proof}

\begin{lem} \label{lem9}
	If Case $1$ holds, then $g = 0$.
\end{lem}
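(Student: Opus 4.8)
The target is $g=0$, and Lemma~\ref{l10} is designed for exactly this. I would first dispose of the easy case by specialising the two-sided multiplier $Y$ in $P(Y;a_1)$ to a central element $z\in Z(D)$: then every factor commutes with $g(a_1)$, and using that $p-1\mid n+m-2$ forces $n+m$ even,
\[
  P(z;a_1)=f(z)\,g(a_1),\qquad f(Y)=(1+Y)^{n+m}+(1-Y)^{n+m}-2Y^{n+m}-2\in\mathbb{Z}_p[Y].
\]
If $f\neq0$, then since $D$ is non-commutative $Z(D)$ is an infinite field, so $f(z)\neq0$ for some $z\in Z(D)$, and Lemma~\ref{l10}(iv) (equivalently Lemma~\ref{l10}(ii) with $a_2=a_1^{-1}z$) yields $g(a_1)=0$ for all $a_1$, i.e. $g=0$. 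Since $f(Y)=2\sum_{j\ \mathrm{even},\,0<j<n+m}\binom{n+m}{j}Y^{j}$, the non-vanishing of $f$ amounts to finding an interior even base-$p$ submask of $n+m$, which I would extract by Lucas/Kummer from the digits of $n+m$ using the Case~1 hypotheses ($k_i>1$, $p\nmid k_i$, $k_i-1$ not a power of $p$), in the same way Lemma~\ref{l7} feeds Lemma~\ref{l9}.

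The step I expect to fight with is that $f$ can be \emph{identically zero}: for $p=5$, $n=3$, $m=7$ (which satisfies (\ref{1*}) and Case~1) one has $n+m=2p$ and $f\equiv0$, so the central specialisation is empty. In that event the plan is to keep the genuinely two-sided polynomial $P(Y;a_1)=2\sum\binom{n}{a}\binom{m}{b}\,Y^{a}g(a_1)Y^{b}$ (sum over admissible $(a,b)$ with $a+b$ even, $(a,b)\neq(0,0),(n,m)$), which by Lemma~\ref{l10}(ii) is a GPI on the centralizer $C_D(a_1)$. If some value $g(a_1)$ lies outside $Z(D)$, the monomials $Y^{a}g(a_1)Y^{b}$ are $Z(D)$-independent, so $P(Y;a_1)\neq0$ as soon as one coefficient survives modulo $p$; taking $a=0$ and $b=p^{l_2}c$ with $c$ even, $0<c<k_2$ and $p\nmid\binom{k_2}{c}$ (such $c$ is exactly what Lemma~\ref{l7} produces) does this. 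Hence $C_D(a_1)$ is a GPI-algebra, and feeding identity~(i) of Lemma~\ref{l10} into the machinery of Sections~2--3 (in the spirit of Theorems~\ref{ttl2}, \ref{tcl5} and~\ref{m1}) upgrades this to $[D:Z(D)]<\infty$.

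Once $D$ is finite-dimensional I would finish as in Lemma~\ref{ll6}: extend scalars to $D\otimes_{Z(D)}F\cong M_r(F)$ with $r>1$, note that identity~(i) (or $P(Y;a_1)$) passes to $M_r(F)$, and evaluate at explicit matrices (diagonal idempotents with $Y=\pm I$) to contradict $g(a_1)\neq0$, forcing $g=0$. The residual configuration $g(D)\subseteq Z(D)$, where the monomials collapse and $P(Y;a_1)=f(Y)g(a_1)$ again gives nothing, must be cleared directly: there the hypothesis reduces to $g(y)=-g(y^{-1})\,y^{n+m}$ with central values, which is readily seen to force $g\equiv0$ on the non-commutative $D$. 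The two genuinely delicate points are (a) the uniform base-$p$ combinatorics guaranteeing a surviving coefficient over all quadruples $(l_1,l_2,k_1,k_2)$, and (b) the passage from a non-trivial GPI to the finite-dimensionality of $D$ itself (not merely of $C_D(a_1)$) together with the attendant matrix computation.
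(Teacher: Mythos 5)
Your opening move (specialise $Y$ to $Z(D)$, reduce everything to the non-vanishing of $f(Y)=(1+Y)^{n+m}+(1-Y)^{n+m}-2Y^{n+m}-2$ in $\mathbb{Z}_p[Y]$, and invoke Lemma~\ref{l10}(iv)) is sound as far as it goes, and your observation that $f$ can vanish identically under the Case~1 hypotheses (e.g.\ $p=5$, $n=3$, $m=7$, where $(1\pm Y)^{10}=(1\pm Y^{5})^{2}$) is correct and probes a real soft spot. But the paper's proof never passes through this commutative collapse: it keeps the two-sided polynomial $P(Y;a_{1})=P_{1}(Y)g(a_{1})P_{2}(Y)$, asserts its non-vanishing in $D[Y]$ from Lemma~\ref{l7}, and then runs a two-case argument on where $P(\cdot\,;a_{1})$ vanishes --- if $P(d;a_{1})\neq0$ for every non-central $d$ it applies Lemma~\ref{l10}(iii) to each such $d$; otherwise, for an arbitrary non-central $w$ it uses that $C_{D}(w)$ is an infinite division ring together with a root-counting theorem (\cite[Theorem $16.7$]{tl1}) to produce $z\in C_{D}(w)$ with $P(z;a_{1})\neq0$, whence $g(C_{D}(z))=0$ and in particular $g(w)=0$. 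That route is far lighter than your fallback and never needs finite-dimensionality.

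Your fallback contains two genuine gaps. First, the identity $P(a_{1}a_{2};a_{1})=0$ of Lemma~\ref{l10}(ii) holds only for $a_{2}$ commuting with $a_{1}$, so what you obtain is a GPI for the centralizer $C_{D}(a_{1})$; Martindale's theorem then bounds $[C_{D}(a_{1}):Z(C_{D}(a_{1}))]$, not $[D:Z(D)]$. You flag this yourself as point (b), but you supply no mechanism for the upgrade, and neither ``feeding Lemma~\ref{l10}(i) into Sections 2--3'' nor the subsequent evaluation at explicit matrices is specified concretely enough to check. Second, the residual configuration $g(D)\subseteq Z(D)$ with $f\equiv0$ --- exactly the configuration in which every polynomial in sight degenerates --- is dismissed as ``readily seen'', but it is not: from $g(y)=-g(y^{-1})y^{n+m}$ with central values one only deduces $g(y^{-1})=0$ whenever $y^{n+m}\notin Z(D)$, and one still needs a Kaplansky-type argument to rule out $y^{n+m}\in Z(D)$ holding on too large a set before concluding $g=0$ on the non-commutative $D$. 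Until these two steps are supplied the proposal is a plan rather than a proof. Your point (a), by contrast, is already settled by Lemma~\ref{l7} exactly as you guess: the pair $(i,j)=(0,p^{s_{2}}+1)$, or $(p^{s_{1}},p^{s_{2}})$ when $k_{1},k_{2}$ are both odd, gives a coefficient surviving modulo $p$.
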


\begin{proof}
	Suppose $ g \neq 0$. Let 
	\begin{equation*}
		\begin{aligned}
			P(Y;a_{1}) = (1+Y)^{n}g(a_{1})(1+Y)^{m} + (1-Y)^{n}g(a_{1})(1-Y)^{m} & - 2Y^{n}g(a_{1})Y^{m} \\ 
			& -2g(a_{1}) \in D_{G}[Y].
		\end{aligned}
	\end{equation*}
 By Lemmas \ref{l7}(ii) and \ref{l10}(iv), we get $P(Y;a_{1}) = P_{1}(Y)g(a_{1})P_{2}(Y) \neq 0$ in $D_{G}[Y]$ and $P(Z(D);a_{1})=0$.

	\textbf{Case $A$:}  If $P(d;a_{1}) \neq 0$ for any $ d \in D \backslash Z(D) $, then by Lemma \ref{l10}(iii), we get $g(C_D(d))=0$ for all $d \in D \backslash Z(D)$. In particular, $g(Z(D))=0$ and $g(d)=0$ for all $d \in D \backslash Z(D)$, which implies $g=0$.

	\textbf{Case $B$:} Let $P(d;a_{1})=0$ for some $d \in D \backslash Z(D)$ and $w \in D \backslash Z(D)$. Then $C_D(w)$ is an infinite division ring by \cite[Theorem $13.10$]{tl1}. From \cite[Theorem $16.7$]{tl1}, it follows that $P(z;a_{1}) \neq 0$ for some $z \in C_D(w)$. By Lemma \ref{l10} (iii), we get $g(C_D(z))=0$. It implies that $g(w)=0$ and $g(Z(D))=0$. Hence $g=0$.
	
\end{proof}

By using the similar arguments as in Lemma \ref{lem9}, we obtain the following remarks.

\begin{rem} \label{crem}
 If Case $2$ holds, then $g=0$.
 
\end{rem}

\begin{rem} \label{c1rem}
	If Case $3$ holds, then $g=0$.
\end{rem}

\begin{lem} \label{ll10}
	Assume that Case $4$ holds. If $a_1 \in Z(D)$, then $(a_{1}^{m} + a_{1}^{n} + (a_{1}^{p^{\ell_{1}}} + a_{1}^{p^{\ell_{1}+m_{1}}})(a_{1}^{p^{\ell_{2}}} + a_{1}^{p^{\ell_{2}+m_{2}}}))g(a_{2}) = 0$ for all $a_{2} \in D$.
	
\end{lem}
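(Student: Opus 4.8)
The plan is to reduce everything to the single generalized polynomial $P(Y;a)$ introduced in Lemma \ref{l10} and to read off the two assertions by feeding a central element either into the \emph{polynomial} slot $Y$ or into the \emph{coefficient} slot $a$. First I would record the shape of $P(Y;a)$ under Case $4$. Writing $n=p^{l_{1}+m_{1}}+p^{l_{1}}$, $m=p^{l_{2}+m_{2}}+p^{l_{2}}$ and using the Frobenius identities $(1\pm Y)^{p^{k}}=1\pm Y^{p^{k}}$ (valid since $p$ is odd), one has $(1\pm Y)^{n}=(1\pm Y^{p^{l_{1}}})(1\pm Y^{p^{l_{1}+m_{1}}})$ and $(1\pm Y)^{m}=(1\pm Y^{p^{l_{2}}})(1\pm Y^{p^{l_{2}+m_{2}}})$, so the four cross terms of $P(Y;a)$ collapse and a direct expansion gives, in $D\{Y\}$,
\begin{equation*}
 P(Y;a)=2\Bigl(g(a)Y^{m}+Y^{n}g(a)+(Y^{p^{l_{1}}}+Y^{p^{l_{1}+m_{1}}})\,g(a)\,(Y^{p^{l_{2}}}+Y^{p^{l_{2}+m_{2}}})\Bigr)
\end{equation*}
for every $a\in D$. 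Write $F_{a}(Y)$ for the bracket. The point is that substituting a \emph{central} element $z$ for $Y$ lets every coefficient commute past $z$, so $F_{a}(z)=Q(z)g(a)$, where $Q(Y)=Y^{m}+Y^{n}+(Y^{p^{l_{1}}}+Y^{p^{l_{1}+m_{1}}})(Y^{p^{l_{2}}}+Y^{p^{l_{2}+m_{2}}})$ is exactly the central polynomial in the statement.

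For the second assertion I would fix $a_{1}\in Z(D)$ with $a_{1}\neq 0$ (the case $a_{1}=0$ is trivial) and, for an arbitrary $a_{2}\in D^{*}$, apply Lemma \ref{l10}$(ii)$ to the commuting pair $b=a_{2}$, $c=a_{2}^{-1}a_{1}$: since $a_{1}$ is central, $bc=cb=a_{1}$, hence $P(bc;b)=P(a_{1};a_{2})=0$. Evaluating the displayed formula at the central element $a_{1}$ gives $P(a_{1};a_{2})=2Q(a_{1})g(a_{2})$, and as $\operatorname{char}D\neq 2$ this is precisely $Q(a_{1})g(a_{2})=0$ for all $a_{2}\in D$, which is the claimed relation.

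For the first assertion I keep $a_{1}\in Z(D)$ but now vary the \emph{product} slot: for every $a_{2}\in D$ the elements $a_{1},a_{2}$ commute, so Lemma \ref{l10}$(ii)$ gives $P(a_{1}a_{2};a_{1})=0$, i.e. $F_{a_{1}}(a_{1}a_{2})=0$. Since $a_{1}$ is a nonzero central unit, $c=a_{1}a_{2}$ ranges over all of $D$, so $F_{a_{1}}(Y)$ vanishes identically on $D$, i.e. $F_{a_{1}}$ is a GPI for $D$. Suppose $g(a_{1})\neq 0$. If $g(a_{1})\notin Z(D)$ the monomial $Y^{n}g(a_{1})$ cannot cancel (every other term carries $g(a_{1})$ on the left or strictly inside), so $F_{a_{1}}\neq 0$ is a \emph{nontrivial} GPI and Theorem \ref{m1} forces $[D:Z(D)]<\infty$. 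Passing to a splitting field $F\supseteq Z(D)$ with $D\otimes_{Z(D)}F\cong M_{r}(F)$, $r>1$, as in Lemma \ref{ll6}, the identity $F_{a_{1}}$ persists on $M_{r}(F)$; evaluating it on a diagonal matrix $X=\operatorname{diag}(\lambda_{1},\dots,\lambda_{r})$ shows that each entry of $g(a_{1})$ is multiplied by $\lambda_{t}^{m}+\lambda_{s}^{n}+(\lambda_{s}^{p^{l_{1}}}+\lambda_{s}^{p^{l_{1}+m_{1}}})(\lambda_{t}^{p^{l_{2}}}+\lambda_{t}^{p^{l_{2}+m_{2}}})$, whose pure term $\lambda_{s}^{n}$ is uncancellable. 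As $F$ is infinite this two variable polynomial is not identically zero, forcing every entry, hence $g(a_{1})$, to vanish, a contradiction; so $g(a_{1})=0$.

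The main obstacle is the degenerate configuration in which $g(a_{1})$ is \emph{central} and $Q\equiv 0$ in $\mathbb{Z}_{p}[Y]$. This really can occur under \eqref{1*} and Case $4$ (for example $p=3$, $n=2\cdot 3^{t}$, $m=4\cdot 3^{t}$ with $t\geq 1$, compatible with $p-1\mid n+m-2$; compare Remark \ref{rem lemm1}). There $F_{a_{1}}=g(a_{1})Q(Y)$ is the \emph{zero} element of $D\{Y\}$, the GPI above is trivial, and neither the Martindale step nor the matrix substitution is available. The delicate part is to still extract $g(a_{1})=0$ here; I would do this by producing an independent nontrivial GPI, exactly along the lines of Remarks \ref{rem lemm1}--\ref{rem lemm2}, polarizing the additive relation $g(y^{2})=(1+y)^{n}g(y)(1+y)^{m}-y^{n}g(y)y^{m}-g(y)$ of Lemma \ref{l10}$(i)$ to force $[D:Z(D)]<\infty$, and then closing the argument by the same diagonal matrix substitution, where the uncancellable pure power once more forces $g(a_{1})=0$.
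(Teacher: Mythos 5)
Your derivation of the second assertion is correct and is actually cleaner than the paper's. The collapse of $P(Y;a)$ to $2\bigl(g(a)Y^{m}+Y^{n}g(a)+(Y^{p^{l_{1}}}+Y^{p^{l_{1}+m_{1}}})g(a)(Y^{p^{l_{2}}}+Y^{p^{l_{2}+m_{2}}})\bigr)$ is a valid computation in $D\{Y\}$ (the binomial/Lucas step only needs $1$ and $Y$ to commute), and feeding the commuting pair $(a_{2},a_{2}^{-1}a_{1})$ into Lemma \ref{l10}$(ii)$ gives $Q(a_{1})g(a_{2})=0$ directly, \emph{without} first knowing $g(a_{1})=0$. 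The paper instead proves $g(a_{1})=0$ first, deduces $g(a_{1}^{2}a_{2})=a_{1}^{n+m}g(a_{2})$, and then obtains the second identity by the substitutions $a_{1}\mapsto a_{1}+1$ and $a_{1}\mapsto -a_{1}$; your route decouples the two claims, which is a genuine structural improvement.

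The first assertion, however, is where the genuine gap sits, and you have named it yourself: when $g(a_{1})\in Z(D)$ and $Q\equiv 0$ in $\mathbb{Z}_{p}[Y]$ (your example $p=3$, $n=2\cdot 3^{t}$, $m=4\cdot 3^{t}$, $t\geq 1$ is a legitimate instance of \eqref{1*} and Case $4$), the element $F_{a_{1}}$ is the zero element of $D\{Y\}$, so there is no nontrivial GPI and the Martindale--matrix argument has nothing to act on. Your proposed repair is only a plan, and as stated it does not obviously work: polarizing $g(y^{2})=(1+y)^{n}g(y)(1+y)^{m}-y^{n}g(y)y^{m}-g(y)$ produces a \emph{functional} identity in which $g$ is evaluated at varying arguments, not a generalized polynomial identity with fixed coefficients, so Theorem \ref{m1} does not apply directly; and even after finite-dimensionality is secured, the ``diagonal matrix substitution'' needs a fixed-coefficient identity on $M_{r}(F)$, whereas $g$ need not extend to $D\otimes_{Z(D)}F$. (A smaller, fixable imprecision: in your non-central case the diagonal $(s,s)$ entries are multiplied only by $Q(\lambda_{s})$, which may vanish identically; you get the contradiction from the off-diagonal entries vanishing in every basis, not from ``every entry'' vanishing.) It is worth recording that this degenerate configuration is exactly where the paper's own argument is thinnest: there the map $g'$ is claimed to give a nonzero $Q(X,Y)\in D\{X,Y\}$, but in your example $g'$ is already the zero generalized polynomial, so the paper's assertion ``$Q(X,Y)\neq 0$'' fails for the same reason your $F_{a_{1}}$ does. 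So your proposal reproduces the paper's argument in all non-degenerate situations and improves the logic of the second assertion, but the degenerate case remains an unclosed gap in the proposal (and, arguably, in the paper).
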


\begin{proof}
	From equation (\ref{eq6}), we obtain
	\begin{equation} \label{leq22 }
		\begin{aligned}
			&g(a_{1}^2a_{2}) =   a_{1}^na_{1}^{m}g(a_{2}) + a_{1}^{n}a_{2}^{n} g(a_{1}) + g(a_{1}) a_{1}^{m}a_{2}^{m} \\
			&+ (a_{1}^{p^{\ell_{1}}}a_{2}^{p^{\ell_{1}}} + a_{1}^{p^{\ell_{1} + m_{1}}}a_{2}^{p^{\ell_{1} + m_{1}}})g(a_{1})(a_{1}^{p^{\ell_{2}}}a_{2}^{p^{\ell_{2}}}  + a_{1}^{p^{\ell_{2} + m_{2}}}a_{2}^{p^{\ell_{2} + m_{2}}}))g(a_{1})
		\end{aligned}
	\end{equation}
	for all $a_{2} \in D$.
	
	Suppose $g(a_{1}) \neq 0$. Then the map $$a_{2} \mapsto (a_{1}^{n}a_{2}^{n} + a_{1}^{m}a_{2}^{m} + (a_{1}^{p^{\ell_{1}}}a_{2}^{p^{\ell_{1}}} + a_{1}^{p^{\ell_{1} + m_{1}}}a_{2}^{p^{\ell_{1} + m_{1}}})(a_{1}^{p^{\ell_{2}}}a_{2}^{p^{\ell_{2}}}  + a_{1}^{p^{\ell_{2} + m_{2}}}a_{2}^{p^{\ell_{2} + m_{2}}}))g(a_{1}) $$ is an additive map say $g^{'}$. 
	Let 
	\begin{equation*}
		\begin{aligned}
		Q(X,Y) = g^{'}(X+Y) - g^{'}(X) - g^{'}(Y) \in D_{G}[X,Y].
	\end{aligned}
	\end{equation*}
	Then, $Q(X,Y) \neq 0$ GPI for $D$. By Theorem \ref{m1} and non-commutativity of $D$, it follows that $D$ is finite-dimensional over its infinite center.

 Since $D$ is a non-commutative finite-dimensional algebra over $Z(D)$, Jacobson's theorem \cite[Theorem $13.11$]{tl1} implies that $Z(D)$ is not algebraic over $\mathbb{Z} \backslash p \mathbb{Z}$. Thus, there exists $\alpha \in Z(D)$ such that $a_{1}^{n}\alpha^{n} + a_{1}^{m}\alpha^{m} + (a_{1}^{p^{\ell_{1}}}\alpha^{p^{\ell_{1}}} + a_{1}^{p^{\ell_{1} + m_{1}}}\alpha^{p^{\ell_{1} + m_{1}}})(a_{1}^{p^{\ell_{2}}}\alpha^{p^{\ell_{2}}}  + a_{1}^{p^{\ell_{2} + m_{2}}}\alpha^{p^{\ell_{2} + m_{2}}}) \neq 0$. It follows that $Q(\alpha, -\alpha) \neq 0$, a contradiction. Hence $g(a_{1}) = 0$. From equation (\ref{leq22 }) it follows that
	\begin{equation} \label{leq23}
		g(a_{1}^{2}a_{2}) = a_{1}^{n}a_{1}^{m}g(a_{2})
	\end{equation}
	for all $a_{2} \in D$. Replacing $a_{1}$ by $a_{1}+1$ in equation (\ref{leq23}), we get 
	\begin{equation*}
		\begin{aligned}
			2g(a_{1}a_{2}) =
			&  g(a_{2})a_{1}^{p^{\ell_{2}}} + g(a_{2})a_{1}^{p^{\ell_{2} + m_{2}}} + g(a_{2})a_{1}^{m}\\
			& \; \; + a_{1}^{p^{\ell_{1}}}g(a_{2}) + a_{1}^{p^{\ell_{1}}}g(a_{2})a_{1}^{p^{\ell_{2}}} + a_{1}^{p^{\ell_{1}}}g(a_{2})a_{1}^{p^{\ell_{2} + m_{2}}} + a_{1}^{p^{\ell_{1}}}g(a_{2})a_{1}^{m} \\
			& \; \;  + a_{1}^{p^{\ell_{1} + m_{1}}}g(a_{2}) + a_{1}^{p^{\ell_{1} + m_{1}}}g(a_{2})a_{1}^{p^{\ell_{2}}} + a_{1}^{p^{\ell_{1} + m_{1}}}g(a_{2})a_{1}^{p^{\ell_{2} + m_{2}}} \\
			& \; \; + a_{1}^{p^{\ell_{1} + m_{1}}}g(a_{2})a_{1}^{m} 
			+ a_{1}^{n}g(a_{2}) + a_{1}^{n}g(a_{2})a_{1}^{p^{\ell_{2}}} + a_{1}^{n}g(a_{2})a_{1}^{p^{\ell_{2} + m_{2}}}
		\end{aligned}
	\end{equation*}
	for all $a_{2} \in D$.
	Replacing $a_{1}$ by $-a_{1}$ in the above equation, we get
	\begin{equation*}
		\begin{aligned}
			-2g(a_{1}a_{2}) 
			= & -g(a_{2})a_{1}^{p^{\ell_{2}}} - g(a_{2})a_{1}^{p^{\ell_{2} + m_{2}}} + g(a_{2})a_{1}^{m}\\
			& \; \; - a_{1}^{p^{\ell_{1}}}g(a_{2}) + a_{1}^{p^{\ell_{1}}}g(a_{2})a_{1}^{p^{\ell_{2}}} + a_{1}^{p^{\ell_{1}}}g(a_{2})a_{1}^{p^{\ell_{2} + m_{2}}} - a_{1}^{p^{\ell_{1}}}g(a_{2})a_{1}^{m} \\
			& \; \;  - a_{1}^{p^{\ell_{1} + m_{1}}}g(a_{2}) + a_{1}^{p^{\ell_{1} + m_{1}}}g(a_{2})a_{1}^{p^{\ell_{2}}} + a_{1}^{p^{\ell_{1} + m_{1}}}g(a_{2})a_{1}^{p^{\ell_{2} + m_{2}}} \\
			& \; \; - a_{1}^{p^{\ell_{1} + m_{1}}}g(a_{2})a_{1}^{m} 
			+ a_{1}^{n}g(a_{2}) - a_{1}^{n}g(a_{2})a_{1}^{p^{\ell_{2}}} - a_{1}^{n}g(a_{2})a_{1}^{p^{\ell_{2} + m_{2}}}
		\end{aligned}
	\end{equation*}
	for all $a_{2} \in D$.
	By adding above two expressions, we obtain
	\begin{equation} \label{mleq24}
		\begin{aligned}
		(a_{1}^{m} + a_{1}^{n} + (a_{1}^{p^{\ell_{1}}} + a_{1}^{p^{\ell_{1}+m_{1}}})(a_{1}^{p^{\ell_{2}}} + a_{1}^{p^{\ell_{2}+m_{2}}}))g(a_{2}) = 0
		\end{aligned}
	\end{equation}
	for all $a_{2} \in  D$.
	
\end{proof}

\begin{rem} \label{rem1}
	If $D$ is a non-commutative division ring with $\operatorname{char}(D) \neq 2,3$ satisfying Case $4$ and $a_1=1 \in Z(D)$, then $g = 0$.
\end{rem}

\begin{lem} \label{c3}
	Suppose $D$ is a non-commutative division ring with $\operatorname{char}(D) = 3$. If $n= p^{\ell_{1}+m_{1}}+p^{\ell_{1}}, m= p^{\ell_{2}+m_{2}}+p^{\ell_{2}}$ for some integers $\ell_{1},\ell_{2},m_{1},m_{2} \geq 0$ and $(m_{1},m_{2}) \neq (0,0)$, then $g = 0$.
\end{lem}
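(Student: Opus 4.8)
The plan is to show that $g$ vanishes on every non-central element; since $g(Z(D))=0$ already holds by Lemma~\ref{ll10}, this yields $g=0$. So I fix $a_1\in D$ with $g(a_1)\neq0$ and aim for a contradiction, following the centralizer scheme of Lemma~\ref{lem9}. The device is the polynomial $P(Y;a_1)\in D[Y]$ of Lemma~\ref{l10}. By Lemma~\ref{l10}(ii) it satisfies $P(d;a_1)=0$ for every $d\in C_D(a_1)$: writing $d=a_1(a_1^{-1}d)$, the second factor commutes with $a_1$, and as $a_1^{-1}d$ ranges over $C_D(a_1)$ the product $d$ ranges over all of $C_D(a_1)$. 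Thus producing a single $z\in C_D(a_1)$ with $P(z;a_1)\neq0$ forces $g(a_1)=0$.

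The essential difficulty in characteristic $3$ is that central specialisation of $P$ carries no information. Evaluating $Y$ at a central scalar collapses $P(Y;a_1)$ to $\phi(Y)g(a_1)$, where $\phi(Y)=(1+Y)^{n+m}+(1-Y)^{n+m}-2Y^{n+m}-2$; since $p-1\mid n+m-2$ gives $n+m$ even, and since in Case~$4$ one may have $n+m=2\cdot3^{t}$, the Frobenius relation $(1\pm Y)^{3^{k}}=1\pm Y^{3^{k}}$ forces $\phi\equiv0$. This is exactly why the substitution $a_1=1$ of Remark~\ref{rem1} (with coefficient $6$) and, more generally, all central substitutions fail here. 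Hence $P$ must be treated as a genuinely two-sided generalized polynomial, not a central multiple of $g(a_1)$.

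The key step is therefore to expand $P$ in $D[Y]$ and certify that it is a nontrivial GPI. Collecting the $(1+Y)$ and $(1-Y)$ contributions gives $P(Y;a_1)=2\sum_{i+j\ \text{even}}\binom{n}{i}\binom{m}{j}\,Y^{i}g(a_1)Y^{j}$, the sum running over $(i,j)\neq(0,0),(n,m)$, the two excluded pairs cancelling against $-2Y^{n}g(a_1)Y^{m}$ and $-2g(a_1)$. Applying Lucas' theorem modulo $3$ to the base-$3$ expansions of $n=p^{l_1+m_1}+p^{l_1}$ and $m=p^{l_2+m_2}+p^{l_2}$ locates surviving monomials; already the pair $(0,m)$ contributes $2g(a_1)Y^{m}$ with coefficient $2\binom{n}{0}\binom{m}{m}=2\not\equiv0$, and no other index pair produces the monomial $g(a_1)Y^{m}$. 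Hence $P(Y;a_1)\neq0$ in $D[Y]$, and since its coefficients are built from the single nonzero element $g(a_1)$, it is a nontrivial generalized polynomial. The hypothesis $(m_1,m_2)\neq(0,0)$, which makes the two powers in at least one of $n,m$ distinct, is what controls the remaining cross terms in this bookkeeping and separates the present situation from the degenerate case $n=2p^{l_1},\,m=2p^{l_2}$.

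To finish, I would argue as in Lemma~\ref{lem9}: for $a_1\notin Z(D)$ the centralizer $C_D(a_1)$ is an infinite division ring by \cite[Theorem~13.10]{tl1}, so because $P(Y;a_1)$ is a nonzero generalized polynomial, \cite[Theorem~16.7]{tl1} yields $z\in C_D(a_1)$ with $P(z;a_1)\neq0$, contradicting Lemma~\ref{l10}(ii). Therefore $g(a_1)=0$ for every non-central $a_1$, and together with $g(Z(D))=0$ this gives $g=0$. I expect the main obstacle to be precisely the middle step: the characteristic-$3$ collapse $\phi\equiv0$ strips away the easy central substitution, so one must push the Lucas/Frobenius computation far enough to guarantee that $P$ survives as a nontrivial two-sided GPI before the centralizer machinery of \cite{tl1} can be brought to bear.
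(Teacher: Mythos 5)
Your overall scheme (show $P(Y;a_1)$ is formally nonzero, then find a point of the infinite centralizer $C_D(a_1)$ where it does not vanish) is not the paper's argument for this lemma, and it has a genuine gap at precisely the step you yourself flag as the main obstacle. The tool you invoke, \cite[Theorem $16.7$]{tl1}, belongs to the theory of one-sided polynomials $\sum a_it^i\in D[t]$; it is usable in Cases $1$--$3$ only because there $P(Y;a_1)$ factors as $P_1(Y)g(a_1)P_2(Y)$ with $P_1,P_2$ having prime-field (hence central) coefficients, so non-vanishing at a point reduces to non-vanishing of two ordinary polynomials. Once you correctly observe that in characteristic $3$ this central factorization degenerates and you insist on treating $P(Y;a_1)=2\sum\binom{n}{i}\binom{m}{j}Y^{i}g(a_1)Y^{j}$ as a genuinely two-sided object, no cited result produces $z\in C_D(a_1)$ with $P(z;a_1)\neq 0$: a nonzero two-sided expression can vanish identically on an infinite division subring (already $Yc-cY$ vanishes on all of $C_D(c)$), and Martindale's theorem is unavailable because the coefficient $g(a_1)$ need not lie in $C_D(a_1)$. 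There is a second hole: the linear independence of the monomials $Y^{i}g(a_1)Y^{j}$ underlying your ``$(0,m)$ is the only contributor'' claim requires $g(a_1)\notin Z(D)$, not $a_1\notin Z(D)$; if $g(a_1)$ is a nonzero central element (and likewise in $D[Y]$ with a central indeterminate, where $(i,j)$ and $(m,0)$ collapse onto the same monomial), everything reduces to $\phi(Y)g(a_1)$ with $\phi$ the identically vanishing central polynomial, and your argument says nothing. A further warning sign: your nonzero coefficient $2\binom{n}{0}\binom{m}{m}=2$ is equally nonzero when $(m_1,m_2)=(0,0)$, so your argument, if valid, would also dispose of the degenerate case $n=2p^{l_1}$, $m=2p^{l_2}$, which the paper must treat by the entirely separate and much longer Lemma \ref{l11}.

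The paper's actual proof never touches $P(Y;a_1)$ here and is purely scalar: it takes the identity of Lemma \ref{ll10}, namely $\bigl(\alpha^{n}+\alpha^{m}+(\alpha^{p^{l_1}}+\alpha^{p^{l_1+m_1}})(\alpha^{p^{l_2}}+\alpha^{p^{l_2+m_2}})\bigr)g(a_2)=0$ for central $\alpha$, rewrites it as $\alpha^{n}g(a_2)=-\psi(\alpha)g(a_2)$, computes $\alpha^{2n}g(a_2)$ both by substituting $\alpha^{2}$ and by applying the relation twice, and specializes to $\alpha=1$ to obtain a nonzero scalar annihilating $g(a_2)$. To repair your route you would need either to adopt that computation or to supply an honest non-vanishing theorem for two-sided expressions on centralizers; as written, the centralizer machinery cannot be brought to bear.
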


\begin{proof}
	Let $\alpha \in Z(D)$. Then by Lemma \ref{ll10}, we have
	\begin{equation*}
		\alpha^{n}g(a_{2}) = -( \alpha^{m} + (\alpha^{p^{\ell_{1}}} + \alpha^{p^{\ell_{1} + m_{1}}}) (\alpha^{p^{\ell_{2}}} + \alpha^{p^{\ell_{2} + m_{2}}})) g(a_{2})
	\end{equation*}
	for all $a_{2} \in D$. Thus, we obtain
	\begin{equation*}
		\alpha^{2n}g(a_{2}) = -(\alpha^{2m} + (\alpha^{2p^{\ell_{1}}} + \alpha^{2p^{\ell_{1} + m_{1}}}) (\alpha^{2p^{\ell_{2}}} + \alpha^{2p^{\ell_{2} + m_{2}}}) )g(a_{2})
	\end{equation*}
	for all $a_{2} \in D$. On the other hand
	\begin{equation*}
		\begin{aligned}
			\alpha ^{2n} g(a_{2}) & = \alpha^{n}( \alpha^{n} g(a_{2})) \\ 
			& =  - \alpha^{n} ( \alpha^{m} + (\alpha^{p^{\ell_{1}}} + \alpha^{p^{\ell_{1} + m_{1}}}) (\alpha^{p^{\ell_{2}}} + \alpha^{p^{\ell_{2} + m_{2}}})) g(a_{2}) \\
			& = ( \alpha^{m} + (\alpha^{p^{\ell_{1}}} + \alpha^{p^{\ell_{1} + m_{1}}}) (\alpha^{p^{\ell_{2}}} + \alpha^{p^{\ell_{2} + m_{2}}}))^{2} g(a_{2})                          
		\end{aligned}
	\end{equation*}
	for all $a_{2} \in D$. Comparing the above two equations, we get
	\begin{equation*}
		\begin{aligned}
			( \alpha^{m} ( \alpha^{2p^{\ell_{1}}} + \alpha^{2p^{\ell_{1} + m_{1}}}) + & \alpha^{m} (\alpha^{m} + \alpha^{n}) + 2\alpha^{m+n} - \alpha^{2n} \\
			& + \alpha^{n} ( \alpha^{2p^{\ell_{2}}} + \alpha^{2p^{\ell_{2} + m_{2}}}) ) g(a_{2}) = 0
		\end{aligned}
	\end{equation*}
	for all $ a_{2} \in D$. By substituting $\alpha = 1$ in the above expression, we get $g = 0$.
	
\end{proof}
At last, we deal with the case when $m_{i} = 0$ for $i \in \{1,2\}$.

\begin{lem} \label{l11}
	If $n = 2p^{\ell_{1}}, m = 2p^{\ell_{2}}$ for some integers $\ell_{1},\ell_{2} > 0$, then $g = 0$.
\end{lem}

\begin{proof}
	By Lemma \ref{l10}, we have
	\begin{equation} \label{leq24}
		g(a_{1}^{2}) = (1+a_{1}^{p^{\ell_{1}}})^{2}g(a_{1})(1+a_{1}^{p^{\ell_{2}}})^{2} - a_{1}^{2p^{\ell_{1}}}g(a_{1})a_{1}^{2p^{\ell_{2}}} -g(a_{1})
	\end{equation}
	for all $a_{1} \in D$.
	Replacing $a_{1}$ by $-a_{1}$ in equation \eqref{leq24}, we obtain
	\begin{equation} \label{nleq25}
		g(a_{1}^{2}) = (1-a_{1}^{p^{\ell_{1}}})^{2}g(a_{1})(1-a_{1}^{p^{\ell_{2}}})^{2} + a_{1}^{2p^{\ell_{1}}}g(a_{1})a_{1}^{2p^{\ell_{2}}} +g(a_{1})
	\end{equation}
	for all $ a_{1} \in D$. From equation \eqref{leq24} and equation \eqref{nleq25}, we get
	\begin{equation} \label{nleq26}
		g(a_{1}^{2}) = a_{1}^{2p^{\ell_{1}}}g(a_{1}) + g(a_{1})a_{1}^{2p^{\ell_{2}}} + 4a_{1}^{p^{\ell_{1}}}g(a_{1})a_{1}^{p^{\ell_{2}}}
	\end{equation}
	for all $a_{1} \in D$. Since $a_{1}^{p^{\ell_{1}}}, a_{1}^{p^{\ell_{2}}} >1$, applying Remark \ref{rem lemm2} and Theorem \ref{tcl5} to equation \eqref{nleq26}, we get $D$ is finite-dimensional over $Z(D)$. If $Z(D)$ is finite, then the division ring $D$ is also finite. By Wedderburn's Theorem (see \cite{wed}), $D$ is commutative, a contradiction. It gives that $Z(D)$ is infinite. 
	
	Also, by Lemma \ref{ll10}, we have 
	\begin{equation} \label{leq25}
		2g(\alpha a_{2}) = \beta g(a_{2})
	\end{equation}
	where $\beta \in Z(D)$, for all $\alpha \in Z(D)$ and $a_{2} \in D$.
	
	Note that $2^{p^{\ell_{i}}} \equiv 2 \ mod \ p$ for $ i \in \{1,2\}$. Applying Lemma \ref{tl6} to equation (\ref{leq24}), we obtain
	\begin{equation} \label{leq26}
		\begin{aligned}
			&\Bigl(6a_{1}^{2p^{\ell_{1}}} -(a_{1}+a_{2})^{2p^{\ell_{1}}} -(a_{1}-a_{2})^{2p^{\ell_{1}}} \Bigl)g(a_{1}) + g(a_{1})\Bigl(6a_{1}^{2p^{\ell_{2}}}\\
			& -(a_{1}+a_{2})^{2p^{\ell_{2}}} -(a_{1}-a_{2})^{2p^{\ell_{2}}}\Bigl) + 24a_{1}^{p^{\ell_{1}}}g(a_{1})a_{1}^{p^{\ell_{2}}}\\ &-4(a_{1}+ a_{2})^{p^{\ell_{1}}}g(a_{1})(a_{1} + a_{2})^{p^{\ell_{2}}} -4(a_{1} - a_{2})^{p^{\ell_{1}}}g(a_{1})(a_{1} -a_{2})^{p^{\ell_{2}}} \\= 
			& \Bigl(-6a_{2}^{2p^{\ell_{1}}} +(a_{1}+a_{2})^{2p^{\ell_{1}}} - (a_{1}-a_{2})^{2p^{\ell_{1}}}\Bigl)g(a_{2}) \\
			&+ g(a_{2}) \Bigl(-6a_{2}^{2p^{\ell_{1}}} +(a_{1}+a_{2})^{2p^{\ell_{1}}} - (a_{1}-a_{2})^{2p^{\ell_{2}}}\Bigl)\\
			& +4(a_{1} + a_{2})^{p^{\ell_{1}}}g(a_{2})(a_{1} + a_{2})^{p^{\ell_{2}}} -4(a_{1}- a_{2})^{p^{\ell_{1}}}g(a_{2})(a_{1} -a_{2})^{p^{\ell_{2}}}\\
			& + 8a_{2}^{p^{\ell_{1}}}g(a_{2})a_{2}^{p^{\ell_{2}}}
		\end{aligned}
	\end{equation}
	for all $a_{1},a_{2} \in D$. We write 
	\begin{center}
		$(Y_{1}+Y_{2})^{p^{\ell_{1}}} = \sum_{j=1}^{p^{\ell_{1}}}P_{j}(Y_{1},Y_{2}),$
\hspace{0.5cm}
		$(Y_{1}+Y_{2})^{2p^{\ell_{1}}} = \sum_{j=1}^{2p^{\ell_{1}}}U_{j}(Y_{1},Y_{2})$
	\end{center}
	and
 	\begin{center}
		$(Y_{1}+Y_{2})^{2p^{\ell_{2}}} = \sum_{j=1}^{2p^{\ell_{2}}}V_{j}(Y_{1},Y_{2}),$
\hspace{0.5cm}
		$(Y_{1}+Y_{2})^{p^{\ell_{2}}} = \sum_{j=1}^{p^{\ell_{2}}}Q_{j}(Y_{1},Y_{2})$
	\end{center}
	where $P_{j}(Y_{1},Y_{2}),Q_{j}(Y_{1},Y_{2}), U_{j}(Y_{1},Y_{2}), V_{j}(Y_{1},Y_{2})  \in \mathbb{Z}_{p}[Y_{1},Y_{2}]$ are homogeneous in $Y_{2}$ of degree $j$. Suppose $\ell= max\{\ell_{1},\ell_{2} \}$. In the case $\ell_{1} \neq \ell_{2}$, without loss of generality we assume that $\ell_{1}<\ell_{2}$. Then equation (\ref{leq26}) becomes
	\begin{equation} \label{leq27} 
		\begin{aligned}
			& \sum_{j=1}^{p^{\ell}} \Bigl(U_{2j}(a_{1},a_{2})g(a_{1}) + g(a_{1})V_{2j}(a_{1},a_{2})\Bigl) \\
			&+ \sum_{j=1,k=1}^{\frac{p^{\ell}-1}{2},\frac{p^{l}-1}{2}}
			4P_{2j}(a_{1},a_{2})g(a_{1})Q_{2k}(a_{1},a_{2})\\
			& + \sum_{j=1,k=1}^{\frac{p^{\ell}+1}{2},\frac{p^{\ell}+1}{2}}
			4P_{2j-1}(a_{1},a_{2})g(a_{1})Q_{2k-1}(a_{1},a_{2}) \\
			&= \sum_{j=1}^{p^{\ell}} \Bigl(U_{2j-1}(a_{1},a_{2})g(a_{1}) + g(a_{2})V_{2j-1}(a_{1},a_{2})\Bigl)\\
			& + \sum_{j=1,k=1}^{\frac{p^{\ell}-1}{2},\frac{p^{\ell}+1}{2}}
			4P_{2j}(a_{1},a_{2})g(a_{2})Q_{2k-1}(a_{1},a_{2})\\
			& + \sum_{j=1,k=1}^{\frac{p^{\ell}+1}{2},\frac{p^{\ell}-1}{2}}
		   	4P_{2j-1}(a_{1},a_{2})g(a_{2})Q_{2k }(a_{1},a_{2})
		\end{aligned}
	\end{equation}
	for all $a_{1},a_{2} \in D$, where $P_{j}(a_{1},a_{2})= 0 $ for $j > p_{\ell_{1}}$ and $U_{j}(a_{1},a_{2}) = 0$ for $j> 2p^{\ell_{1}}$. Next, by replacing $a_2$ with $\alpha a_2$ in equation \eqref{leq27} and then multiplying both sides by $2$, we obtain
	
	\begin{equation} \label{leq28}
		\begin{aligned}
			& \sum_{j=1}^{p^{\ell}} 2 \alpha^{2j}\Bigl(U_{2j}(a_{1},a_{2})g(a_{1}) + g(a_{1})V_{2j}(a_{1},a_{2})\Bigl) \\
			&+ \sum_{j=1,k=1}^{\frac{p^{\ell}-1}{2},\frac{p^{\ell}-1}{2}}
			8 \alpha^{2j + 2k} P_{2j}(a_{1},a_{2})g(a_{1})Q_{2k}(a_{1},a_{2})\\
			& + \sum_{j=1,k=1}^{\frac{p^{\ell}+1}{2},\frac{p^{\ell}+1}{2}}
			8 \alpha^{2j-1 + 2k -1} P_{2j-1}(a_{1},a_{2})g(a_{1})Q_{2k-1}(a_{1},a_{2}) 
			\\
			&= \sum_{j=1}^{p^{\ell}} \alpha^{2j-1} \beta\Bigl(U_{2j-1}(a_{1},a_{2})g(a_{2}) + g(a_{2})V_{2j-1}(a_{1},a_{2})\Bigl)\\
			& + \sum_{j=1,k=1}^{\frac{p^{\ell}-1}{2},\frac{p^{\ell}+1}{2}}
			4\alpha^{2j +2k -1} \beta P_{2j}(a_{1},a_{2})g(a_{2})Q_{2k-1}(a_{1},a_{2}) \\ 
			& + \sum_{j=1,k=1}^{\frac{p^{\ell}+1}{2},\frac{p^{\ell}-1}{2}}
			4\alpha^{2j-1 + 2k} \beta P_{2j-1}(a_{1},a_{2})g(a_{2})Q_{2k }(a_{1},a_{2})
		\end{aligned}
	\end{equation}
	for all $a_{1},a_{2} \in D$ and $\alpha \in Z(D)$. Since $Z(D)$ is infinite. Applying the standard Vandermonde argument to solve equation (\ref{leq28}), we have
	\begin{equation} \label{eq29}
		\begin{aligned}
			& 2\Bigl(U_{2j}(a_{1},a_{2})g(a_{1}) + g(a_{1})V_{2j}(a_{1},a_{2}\Bigl)\\
			& + \sum_{k=1}^{\frac{p^{\ell}-1}{2}} 8 \alpha^{2k}P_{2j}(a_{1},a_{2})g(a_{1})Q_{2k}(a_{1},a_{2})\\ & - \sum_{k=1}^{\frac{p^{\ell}-1}{2}} 4 \alpha^{2k-1} \beta P_{2j}(a_{1},a_{2})g(a_{2})Q_{2k-1}(a_{1},a_{2})\\
			&  = 
			\beta \Bigl(U_{2j-1}(a_{1},a_{2})g(a_{2}) + g(a_{2})V_{2j-1}(a_{1},a_{2})\Bigl)\\
   & + \sum_{k=1}^{\frac{p^{\ell}-1}{2}} 4 \alpha^{2k} \beta P_{2j-1}(a_{1},a_{2})g(a_{2})Q_{2k}(a_{1},a_{2})\\
   & - \sum_{k=1}^{\frac{p^{\ell}-1}{2}} 8 \alpha^{2k-1}P_{2j-1}(a_{1},a_{2})g(a_{1})Q_{2k-1}(a_{1},a_{2})=0
		\end{aligned}
	\end{equation}
	for all $a_{1},a_{2} \in D$, $1 \leq j \leq p^{\ell} $. For $j=p^{\ell}$, the equation (\ref{eq29}) gives that
	\begin{equation} \label{eq30}
	 2(U_{2p^{\ell}-1}(a_{1},a_{2})g(a_{2}) + g(a_{2}) V_{2p^{\ell}-1}(a_{1},a_{2})) =0
	\end{equation}
	for all $a_{1},a_{2} \in D$ and $\alpha \in Z(D)$. If $\ell_{1} \neq \ell_{2}$, then from equation \eqref{eq30}, we get
	\begin{equation} \label{nleq31}
		g(a_{2}) V_{2p^{\ell}-1}(a_{1},a_{2})=0
	\end{equation}
	for all $a_{1},a_{2} \in D$. Let $a_{2} \in D$ such that $g(a_{2}) \neq 0$. Then it follows that $ V_{2p^{\ell}}(a_{1},a_{2}) = 0$ and	$[a_{1}^{2p^{\ell}},a_{2}]=[a_{1},V_{2p^{\ell}-1}(a_{1},a_{2})]=0$
	for all $a_{1} \in D$. Thus, we have
	\begin{equation*}
		D = C_{D}(a_{1}^{2p^{\ell}-1}) \cup \{ a_{2} \in D | g(a_{2}) =0\}
	\end{equation*}
	for all  $a_{1} \in D$. Since $g \neq 0$,  we get $D = C_{D}(a_{1}^{2p^{\ell}-1})$. This implies that $a_{1}^{p^{\ell}} \in Z(D)$ for all $a_{1} \in D$. By Kaplansky's Theorem \cite[Theorem]{kap1}, it implies that $D$ is commutative, a contradiction.
	On the other hand, if $\ell_{1} = \ell_{2}$, then $P_{p^{\ell}-1}(a_{1}, a_{2}) = Q_{p^{\ell}-1}(a_{1}, a_{2})$, and equation \eqref{eq29} becomes
	\begin{equation*}
 \begin{aligned}
		&\beta U_{2p^{\ell}-1}(a_{1},a_{2})g(a_{1}) + g(a_{1}) \beta  U_{2p^{\ell}-1}(a_{1},a_{2})\\
		& + \sum_{k=1}^{\frac{p^{\ell}-1}{2}} 4 \alpha^{2k} \beta P_{2p^{\ell}-1}(a_{1},a_{2})g(a_{2})P_{2k}(a_{1},a_{2})\\
   & - \sum_{k=1}^{\frac{p^{\ell}-1}{2}} 8 \alpha^{2k-1}P_{2p^{\ell}-1}(a_{1},a_{2})g(a_{1})P_{2k-1}(a_{1},a_{2}) = 0
  \end{aligned}
	\end{equation*}
	for all $a_{1} \in D$. Taking $a_{1} = 1$ in the above equation, we get $$\sum_{k=1}^{\frac{p^{\ell}-1}{2}}  \alpha^{2k} \beta P_{2p^{\ell}-1}(1,a_{2})\\
 g(a_{2})P_{2k}(1,a_{2})=0.$$ Applying the standard Vandermonde argument to the above expression, we obtain $P_{2p^{\ell}-1}(1,a_{2}) = 0$. That is $2p^{\ell}a^{2p^{\ell}-1}=0$, a contradiction. This proves that $g = 0$.
\end{proof}

\begin{proof}[ \textbf{Proof of Theorem \ref{main2}}]
	In view of Lemma \ref{4t2}, we may assume that
	\begin{equation*}
		\operatorname{char}(D) = p >2, n,m>2, p-1 | n+m-2.
	\end{equation*}
	By Lemma \ref{ll56}, we may assume that $ g_{2} = g_{1}$. That is
	\begin{equation*}
		g_{1}(y)y^{-m} +y^ng_{1}(y^{-1})=0
	\end{equation*}
	holds for all $y \in D^{*}$. From Remarks \ref{crem}, \ref{c1rem}, \ref{rem1} and Lemmas \ref{lem9}, \ref{c3}, \ref{l11}, we obtain $g_{1} =g_{2} = 0$.

\end{proof}
%

\section{Acknowledgment}
The author would like to express their sincere thanks to the reviewers and referees for the constructive
comments and suggestions which helped to improve the quality of the paper. First author is supported by DST-INSPIRE (IF$230146$), Ref. No. DST/INSPIRE/03/2023/002370. All authors are equally contributed.

\end{document}